\title[Operators related to symmetrized Jacobi expansions]{Harmonic analysis operators related to symmetrized Jacobi expansions}
\author[B{.} Langowski]{Bartosz Langowski}
\address{Bartosz Langowski, \newline
			Institute of Mathematics and Computer Science, \newline
      Wroc\l{}aw University of Technology,       \newline
      Wyb{.} Wyspia\'nskiego 27,
      50--370 Wroc\l{}aw, Poland      
      }
\email{bartosz.langowski@pwr.wroc.pl}
\newcommand{\el}{l}
\theoremstyle{plain}
\newtheorem{thm}{Theorem}[section]
\newtheorem{lem}[thm]{Lemma}
\newtheorem{prop}[thm]{Proposition}
\newtheorem{cor}[thm]{Corollary}
\newtheorem{defin}[thm]{Definition}
\theoremstyle{definition}
\theoremstyle{remark}
\newtheorem*{rem*}{Remark}
\theoremstyle{plain}
\DeclareMathOperator{\domain}{Dom}
\DeclareMathOperator{\spectrum}{Spec}
\DeclareMathOperator{\support}{supp}
\DeclareMathOperator*{\essup}{ess\,sup}
\def\ko{\delta_N^{\textrm{odd}}}
\def\d{d}
\def\ep{\epsilon}
\def\P{\mathcal P}
\def\m{\mu} 						
\def\ab{\alpha,\beta}
\def\J{\mathcal J} 			
\def\q{\mathfrak q}
\def\de{\delta_N^{\textrm{even}}}
\begin{document}

\begin{abstract}
Following a symmetrization procedure proposed recently by Nowak and Stempak, we consider the setting of symmetrized Jacobi expansions. In this framework we investigate mapping properties of several fundamental harmonic analysis operators, including Riesz transforms, Poisson semigroup maximal operator, Littlewood-Paley-Stein square functions and multipliers of Laplace and Laplace-Stieltjes transform type. Our paper delivers also some new results in the original setting of classical Jacobi expansions.
\end{abstract}

\renewcommand{\today}

\maketitle

\footnotetext{
\emph{\noindent 2010 Mathematics Subject Classification:} primary 42C10; secondary 42C05, 42C20\\
\emph{Key words and phrases:} Jacobi polynomial, Jacobi expansion, Jacobi operator, symmetrization, Poisson semigroup, maximal operator, Riesz transform, square function, spectral multiplier, Calder\'on-Zygmund operator.
} 
\section{Introduction} \label{sec:intro}

This research is motivated by the recent results of Nowak and Sj\"ogren \cite{NoSjogren} and Nowak and Stempak \cite{Symmetrized}. In  \cite{NoSjogren} the authors investigated several fundamental operators in harmonic analysis related to classical Jacobi expansions, including Riesz transforms, imaginary powers of the Jacobi operator, the Jacobi-Poisson semigroup maximal operator and Littlewood-Paley-Stein type square functions. These operators were shown to be (vector-valued) Calder\'on-Zygmund operators in the sense of the associated space of homogeneous type and hence their basic mapping properties could be concluded from the general theory. The results of  \cite{NoSjogren} embrace a continuation and extension of the investigations undertaken in the seminal work of Muckenhoupt and Stein \cite{MuS} and succeeding papers by other authors, see \cite[Section 1]{NoSjogren} and also the comments closing \cite[Section 2]{NoSjogren}. On the other hand, in \cite{Symmetrized} the authors proposed a symmetrization procedure in a context of general orthogonal expansions associated with a second order differential operator $L$, a `Laplacian'. This procedure, combined with a unified conjugacy scheme established in another article by Nowak and Stempak \cite{L2}, allows one to associate, via a suitable embedding, a differential-difference `Laplacian' $\mathbb{L}$ with the initially given orthogonal system of eigenfunctions of $L$ so that the resulting extended conjugacy scheme has the natural classical shape. In particular, the related `partial derivatives' decomposing   $\mathbb{L}$ are skew-symmetric in appropriate $L^2$ space and they commute with Riesz transforms and conjugate Poisson integrals. Thus the symmetrization procedure overcomes the main inconvenience of the theory postulated in \cite{L2}, that is the lack of symmetry in the principal objects and relations resulting in essential deviations of the theory from the classical shape. The price is, however, that the `Laplacian' $\mathbb{L}$ and the associated `derivatives' are not differential, but difference-differential operators. The asymmetry just mentioned concerns, in particular, the point of view taken in \cite{NoSjogren}.

The main objective of the present work is to apply the symmetrization procedure to the setting of classical Jacobi expansions considered in \cite{NoSjogren} and study $L^p$ theory of fundamental harmonic analysis operators in the symmetrized framework. One of natural questions motivating this research is whether the symmetrization procedure, proved in \cite{Symmetrized} to be reasonable from the $L^2$ theory perspective, is also supported by the associated $L^p$ theory. The answer we get is affirmative. Another important aspect of our results is related to the fact that most definitions and relations in the initial non-symmetrized setting may be recovered as suitable `projections' of the corresponding objects in the extended symmetrized situation. Therefore the multiplier results obtained in this paper imply readily new multiplier theorems for Jacobi expansions (recall that multiplier operators, as such, were not considered in \cite{NoSjogren}). Noteworthy, the multipliers in question cover, as special cases, imaginary and fractional powers of the Jacobi `Laplacian'. However, in some cases the `projections' are different from the corresponding non-symmetrized objects existing in the literature. Nevertheless, they appear to be even more appropriate and natural, see comments in \cite[Section 5]{Symmetrized}. This remark concerns especially higher order Riesz transforms and mixed $g$-functions of higher orders. Consequently, our present results lead directly to analogous new results for alternatively defined higher order Riesz transforms and mixed square functions in the Jacobi setting, which differ from those investigated in \cite{NoSjogren}. A remarkable potential application of the latter results is connected with an open problem of developing the theory of Sobolev spaces related to classical Jacobi expansions.

Let $\alpha,\beta> -1$. Recall that the normalized Jacobi trigonometric polynomials $\P_n^{\ab}$ (see Section \ref{sec:prel} for the definition) form an orthonormal basis in $L^2$ of the interval $(0,\pi)$ with respect to the measure 
$$d\m_{\ab}^{+}(\theta) = \Big(\sin\frac{\theta}2\Big)^{2\alpha+1} 
	\Big( \cos\frac{\theta}2\Big)^{2\beta+1} d\theta,\qquad \theta\in (0,\pi),$$
and they are eigenfunctions of the Jacobi operator 
\begin{equation}\label{jacobi}
\J^{\ab} = - \frac{d^2}{d\theta^2} - \frac{\alpha-\beta+(\alpha+\beta+1)\cos\theta}{\sin \theta}
	\frac{d}{d\theta} + \Big( \frac{\alpha+\beta+1}{2}\Big)^2.
\end{equation}
The factorization 
$$
\J^{\ab} = \delta^*\delta + \Big( \frac{\alpha+\beta+1}{2}\Big)^2,
$$
where $\delta = {d}\slash{d\theta}$ and $\delta^* = -\frac{d}{d\theta} - (\alpha+\frac{1}2)\cot\frac{\theta}2+(\beta+\frac{1}{2})\tan\frac{\theta}2$ is the formal adjoint of $\delta$ in $L^2(d\m_{\ab}^{+}),$ determines a natural derivative $\delta$ associated with $\J^{\ab}.$ Notice that $\delta\neq-\delta^*$ in general. In this paper we shall work principally on the larger interval $(-\pi,\pi)$ equipped with the measure 
$$
d\m_{\ab}(\theta) = \Big|\sin\frac{\theta}2\Big|^{2\alpha+1} 
	\Big( \cos\frac{\theta}2\Big)^{2\beta+1} d\theta,\qquad \theta\in (-\pi,\pi).
$$
The system of symmetrized Jacobi polynomials $\{\Phi_n^{\ab}: n\ge 0\}$ (see Section \ref{sec:prel}) is an orthonormal basis in $L^2(d\m_{\ab})$ consisting of eigenfunctions of the symmetrized Jacobi `Laplacian'
\begin{align}\label{symml}
\mathbb{J}^{\ab}f & = -\frac{d^2f}{d\theta^2} - \frac{\alpha-\beta+(\alpha+\beta+1)\cos\theta}{\sin\theta}
	\frac{df}{d\theta} + \Big(\frac{\alpha+\beta+1}2\Big)^2 f\\ \nonumber
	& \quad + 
	\frac{(\alpha+\beta+1)+ (\alpha-\beta)\cos\theta}{\sin^2\theta} f_{\textrm{odd}},
\end{align}
where $f_{\textrm{odd}}$ is the odd part of $f$. This operator can be decomposed as
$$\mathbb{J}^{\ab}=-D^2+\Big(\frac{\alpha+\beta+1}{2}\Big)^2,$$
where the `derivative' $D$ given by
$$Df=\frac{df}{d\theta}+\frac{\alpha-\beta+(\alpha+\beta+1)\cos\theta}{\sin\theta}f_{\textrm{odd}}$$
is formally skew-adjoint in $L^2(d\m_{\ab})$. It is worth to note that this $D$ coincides with the Jacobi-Dunkl operator on the interval $(-\pi,\pi)$, and the system $\{\Phi_n^{\ab}: n\ge 0\}$ consists essentially of so-called Jacobi-Dunkl polynomials (see the comment in \cite[Example 4]{Symmetrized}). However, the situation considered here is not isomorphic to the Jacobi-Dunkl setting since the `Laplacian' emerging from the symmetrization procedure differs somewhat from the Jacobi-Dunkl `Laplacian' and consequently harmonic analysis operators studied in this paper are not the same as their Jacobi-Dunkl counterparts.

The central objects of our study are the following linear or sublinear operators associated with $\mathbb{J}^{\ab}$ (strict definitions will be given in Section \ref{sec:prel}).
\begin{itemize}

\item[(i)] Symmetrized Riesz-Jacobi transforms of arbitrary order $N$
$$
\mathbb{R}_N^{\ab}\colon f \mapsto D^{N} \big(\mathbb{J}^{\ab}\big)^{-N\slash 2}f, \qquad N=1,2,\ldots.
$$
\item[(ii)] The symmetrized Jacobi-Poisson semigroup maximal operator
$$
\mathbb{H}_{*}^{\ab}\colon f\mapsto \big\| \exp\big(-t\sqrt{\mathbb{J}^{\ab}}\big)f\big\|_{L^{\infty}(dt)}.
$$

\item[(iii)] Mixed square functions of arbitrary orders $M,N$
$$
\mathbb{G}_{M,N}^{\ab}\colon f \mapsto\big\| \partial_t^{M} D^{N} 
\exp\big(-t\sqrt{\mathbb{J}^{\ab}}\big)f \big\|_{L^2(t^{2M+2N-1}dt)},
$$
where $M,N=0,1,2,\ldots$ and $M+N>0$. 
\item[(iv)] Laplace and Laplace-Stieltjes transform type multipliers
$$
\mathbb{M}^{\ab}\colon f \mapsto m\Big(\sqrt{\mathbb{J}^{\ab}}\Big)f,
$$
where $m$ is as in Definition \ref{def} below.
\end{itemize}

\begin{defin}\label{def}
Following E{.} M{.} Stein \cite[p.\,58,\,p.\,121]{Ste} we say that $m$ is a multiplier of Laplace (transform) type associated with $\sqrt{\mathbb{J}_{\ab}}$ if $m$ has the form
\begin{align*}
m(z)=m_{\phi}(z)=\int_0^{\infty} z e^{-tz}\phi(t)\,dt, \qquad z \ge \Big|\frac{\alpha+\beta+1}{2}\Big|,
\end{align*} 
where $\phi$ is a bounded measurable function on $(0,\infty)$. Inspired by \cite{drel,blazej,szarek2}, we also consider multipliers of Laplace-Stieltjes transform type associated with $\sqrt{\mathbb{J}_{\ab}}$ and having the form
\begin{align*}
m(z)=m_{\nu}(z)=\int_{(0,\infty)} e^{-tz}\,d\nu(t), \qquad z \ge \Big|\frac{\alpha+\beta+1}{2}\Big|,
\end{align*} 
where $\nu$ is a signed or complex Borel measure on $(0,\infty)$ with total variation $|\nu|$ satisfying
\begin{align*}
\int_{(0,\infty)} e^{-t|\alpha+\beta+1|/2}\,d|\nu|(t)<\infty.
\end{align*} 
\end{defin}
 
 Our main result is contained in Theorem \ref{thm:main}. It says that under the slight restriction $\alpha,\beta\ge -1\slash 2$ the operators (i)-(iv) are bounded
on $L^p(wd\m_{\ab})$, $1<p<\infty$, and from $L^1(wd\m_{\ab})$ to weak $L^1(wd\m_{\ab})$ for a reasonably large class of even weights $w$ on $(-\pi,\pi)$. When restricted to even functions, this implies analogous results in the (non-symmetrized) Jacobi setting that are new in cases of the Laplace and Laplace-Stieltjes type multipliers and alternatively defined higher order Riesz-Jacobi transforms and mixed $g$-functions. Thus Theorem \ref{thm:main} may be regarded as an extension and completion of the results obtained in \cite{NoSjogren}.

The proof of Theorem \ref{thm:main} proceeds as follows. Exploiting the arguments from \cite{imag}, in the first step we reduce the task to showing similar results, stated in Theorem \ref{thm:main'}, for suitably defined operators related to the smaller measure space $((0,\pi),d\mu_{\ab}^{+})$. Then Theorem \ref{thm:main'} is proved by means of the general (vector-valued) Calder\'on-Zygmund theory with the underlying space of homogeneous type $((0,\pi),d\mu_{\ab}^{+},|\cdot|),$ where $|\cdot|$ stands for the ordinary Euclidean distance. The same approach was used in \cite{NoSjogren} and hence we can take a direct advantage of some results obtained there. The most troublesome part of our line of reasoning is to show that certain integral kernels satisfy so-called standard estimates. An essential tool to perform this task is a suitable convenient representation of the Jacobi-Poisson kernel obtained in \cite[Proposition 4.1]{NoSjogren} as an indirect consequence of the product formula for Jacobi polynomials due to Dijksma and Koornwinder \cite{DK}. Note that the latter is valid only for $\alpha,\beta\ge-1/2$, and this restriction is inherited by the present results. Other important tools are delivered by the elegant technique of estimating kernels developed in \cite[Section 4]{NoSjogren}.

The paper is organized as follows. Section \ref{sec:prel} contains the setup, strict definitions of the operators (i)-(iv) and statement of the main result (Theorem \ref{thm:main}). Also, suitable Jacobi-type operators, related to the restricted space $((0,\pi),d\mu_{\ab}^{+})$, are defined and the proof of Theorem \ref{thm:main} is reduced to showing that these auxiliary operators can be interpreted as Calder\'on-Zygmund operators. In Section \ref{sec:L^2} the Jacobi-type operators are proved to be $L^2$-bounded and associated, in the Calder\'on-Zygmund theory sense, with the relevant kernels. This part is quite standard and some of the proofs are similar to those already examined in different settings, but we present rather complete reasoning. Finally, Section \ref{sec:ker} is the core of this work and is devoted to the proofs of all necessary kernel estimates.

Throughout the paper we use fairly standard notation, with all symbols referring either to the measure space $((-\pi,\pi),d\mu_{\ab})$ or to the space of homogeneous type $((0,\pi),d\mu_{\ab}^{+},|\cdot|)$, depending on the context. In the latter case the ball denoted $B(\theta,r)$ is simply the interval $(\theta-r,\theta+r)\cap (0,\pi)$.
By $\langle f,g\rangle_{d\m_{\ab}}$ we mean $\int_{-\pi}^{\pi} f(\theta)\overline{g(\theta)}\, d\m_{\ab}(\theta)$ whenever the integral makes sense, and similarly for $\langle f,g\rangle_{d\m_{\ab}^{+}}$. Further, $L^p(wd\m_{\ab})$ or $L^p(wd\m_{\ab}^{+})$ stand for the weighted $L^p$ spaces, $w$ being a nonnegative
weight on $(-\pi,\pi)$ or $(0,\pi)$, respectively. Given $1 \le p < \infty$, $p'$ is its adjoint exponent, $1\slash p + 1\slash p' =1$.
The Muckenhoupt class of $A_p$ weights related to the measure $d\m_{\ab}$ will be denoted by $A^{\ab}_p$.  
More precisely, $A^{\ab}_p$ is the class of all nonnegative functions $w$ such that
\begin{equation*}
\sup_{I \in \mathcal{I}} \bigg[ \frac{1}{\m_{\ab}(I)} \int_I w(\theta) \,
     d\m_{\ab}(\theta) \bigg]  \bigg[ \frac{1}{\m_{\ab}(I)} \int_I w(\theta)^{-p'\slash p}
     \, d\m_{\ab}(\theta) \bigg]^{p\slash p'} < \infty
\end{equation*}
when $1<p<\infty$, or
\begin{equation*}
\sup_{I \in \mathcal{I}} \frac{1}{\m_{\ab}(I)} \int_I w(\theta) \,
     d\m_{\ab}(\theta) \; \essup_{\theta \in I} \frac{1}{w(\theta)} < \infty
\end{equation*}
if $p=1$; here $\mathcal{I}$ is the family of all subintervals of $(-\pi,\pi).$
The definition of the Muckenhoupt class $(A^{\ab}_p)^{+}$ related to $d\mu_{\ab}^{+}$ is analogous. Note that $A^{\ab}_p\subset L^{1}(d\mu_{\ab})$ and $(A^{\ab}_p)^{+}\subset L^{1}(d\mu_{\ab}^{+}).$ It is easy to check that an even weight $w\in A^{\ab}_p$ if and only if $w^{+}\in (A^{\ab}_p)^{+}.$ Moreover, a double-power weight $w(\theta)=\big|\sin\frac{\theta}{2}\big|^r\big(\cos\frac{\theta}{2}\big)^s$ belongs to $ A^{\ab}_p$, $1<p<\infty$, if and only if $-(2\alpha+2)<r<(2\alpha+2)(p-1)$ and $-(2\beta+2)<s<(2\beta+2)(p-1)$, and $w\in A^{\ab}_1$ if and only if $-(2\alpha+2)<r\le 0$ and $-(2\beta+2)<r\le 0.$ Finally, given a function $f$ on $(-\pi,\pi)$, we will denote by $f_{\textrm{even}}$ and $f_{\textrm{odd}}$ its even and odd part, respectively,
 $$f_{\textrm{even}}(\theta)=\frac{f(\theta)+f(-\theta)}{2},\qquad f_{\textrm{odd}}(\theta)=\frac{f(\theta)-f(-\theta)}{2},$$
and by $f^{+}$ its restriction to the interval $(0,\pi)$.

While writing estimates, we will frequently use the notation $X \lesssim Y$
to indicate that $X \le CY$ with a positive constant $C$ independent of significant quantities.
We shall write $X \simeq Y$ when simultaneously $X \lesssim Y$ and $Y \lesssim X$.

\textbf{Acknowledgments.}
The author would like to express his gratitude to Professor Adam Nowak for indicating the topic and constant support during the preparation of this paper and to Tomasz Z. Szarek for some valuable remarks.

\section{Preliminaries and statement of main results} \label{sec:prel}

Given $\alpha, \beta > -1$, the standard Jacobi polynomials
of type $\ab$ are defined on the interval $(-1,1)$ by the Rodrigues formula (cf. \cite[(4.3.1)]{Sz})
$$
P_n^{\ab} (x) = \frac{(-1)^n}{2^n n!} (1-x)^{-\alpha}(1+x)^{-\beta}
    \frac{{\d}^n}{{\d}x^n} \big( (1-x)^{\alpha+n}(1+x)^{\beta+n} \big),
    \qquad n=0,1,2,\ldots.
$$
Note that each $P_n^{\ab}$ is a polynomial of degree $n$.
It is natural and convenient to apply the trigonometric parametrization 
$x=\cos\theta$, $\theta\in (0,\pi)$, and consider the normalized trigonometric polynomials
$$
\P_n^{\ab}(\theta) = c_n^{\ab} P_n^{\ab}(\cos\theta),
$$
with the normalizing factor
$$
c_n^{\ab}=\|P_n^{\ab}\circ\cos\|^{-1}_{L^2(d\m_{\ab}^{+})} =
	\bigg( \frac{(2n+\alpha+\beta+1)\Gamma(n+\alpha+\beta+1)\Gamma(n+1)}
		{\Gamma(n+\alpha+1)\Gamma(n+\beta+1)} \bigg)^{1\slash 2},
$$
where for $n=0$ and $\alpha+\beta=-1$
the product $(2n+\alpha+\beta+1)\Gamma(n+\alpha+\beta+1)$ must be replaced by
$\Gamma(\alpha+\beta+2)$. It is well known that the system $\{\P_n^{\ab}:n\ge 0\}$
is orthonormal and complete in $L^2(d\m_{\ab}^{+})$. 
Moreover, each $\P_n^{\ab}$ is an eigenfunction of the Jacobi operator \eqref{jacobi}, 
$$
\J^{\ab} \P_n^{\ab} = \lambda_n^{\ab} \P_n^{\ab}, 
	\qquad \lambda_n^{\ab}=\Big( n+ \frac{\alpha+\beta+1}{2}\Big)^2.
$$
Thus $\J^{\ab}$, considered initially on $C_c^{2}(0,\pi)$, has a natural self-adjoint
extension in $L^2(d\m_{\ab}^{+})$, still denoted by the same symbol $\J^{\ab}$, and given by
\begin{equation} \label{sres'}
\J^{\ab} f = \sum_{n=0}^{\infty} \lambda_n^{\ab} \langle f, \P_n^{\ab} \rangle_{d\m_{\ab}^{+}} \P_n^{\ab}
\end{equation}
on the domain $\domain\J^{\ab}$ consisting of all functions $f\in L^2(d\m_{\ab}^{+})$ 
for which the defining series converges in $L^2(d\m_{\ab}^{+})$. 
Then the spectral decomposition of $\J^{\ab}$ is given by \eqref{sres'}.

The symmetrization procedure was described in \cite[Section 3]{Symmetrized}. Applied to the setting of Jacobi trigonometric expansions on the interval $(0,\pi)$, it leads to the following symmetrized framework on the interval $(-\pi,\pi)$. The symmetrized Jacobi operator $\mathbb{J}^{\ab}$ is given by \eqref{symml}. The system $\{\Phi_n^{\ab}: n\ge 0\}$ on $(-\pi,\pi)$ associated with $\mathbb{J}^{\ab}$ and related to the original
system $\{\P_n^{\ab}:n\ge 0\}$ is defined as
\begin{align}\label{Phi}
\Phi_{n}^{\ab} =
\begin{cases}
\frac{1}{\sqrt{2}} \P_{n/2}^{\ab}, & n \;\; \textrm{even} \\
-\frac{1}{\sqrt{2}} \big(\lambda_{(n+1)\slash 2}^{\alpha,\beta}-\lambda_{0}^{\alpha,\beta}\big)^{-1\slash 2} 
	\, \delta\P_{(n+1)/2}^{\ab}, & n \;\; \textrm{odd}
\end{cases},
\end{align}
where $\P_k^{\ab}$ are considered as functions on $(-\pi,\pi).$ According to \cite[Lemma 3.4]{Symmetrized}, each $\Phi_{n}^{\ab}$ is an eigenfunction of the symmetrized Jacobi operator. More precisely,
$$
\mathbb{J}^{\alpha,\beta}\Phi_n^{\ab} = \lambda_{\langle n \rangle} \Phi_n^{\ab}, \qquad n \in \mathbb{N},
$$
where we use the notation $\langle n \rangle=\Big\lfloor\frac{n+1}2 \Big\rfloor$ introduced in \cite{Symmetrized} (here $\lfloor\cdot\rfloor$ denotes the floor function). Thus $\mathbb{J}^{\ab}$, considered initially on $C_c^{2}((-\pi,0)\cup(0,\pi))$, has a natural self-adjoint extension in $L^2(d\m_{\ab})$, still denoted by $\mathbb{J}^{\ab}$ and given by
\begin{equation} \label{sresbb}
\mathbb{J}^{\ab} f = \sum_{n=0}^{\infty} \lambda_{\langle n \rangle}^{\ab} \langle f, \Phi_n^{\ab} \rangle_{d\m_{\ab}} \Phi_n^{\ab}
\end{equation}
on the domain $\domain\mathbb{J}^{\ab}$ consisting of all functions $f\in L^2(d\m_{\ab})$ 
for which the defining series converges in $L^2(d\m_{\ab})$; see \cite[Section 4]{Symmetrized}. Clearly, the spectral decomposition of $\mathbb{J}^{\ab}$ is given by \eqref{sresbb}.

The semigroup of operators generated by the square root of $\mathbb{J}^{\ab}$ will be called the symmetrized Jacobi-Poisson semigroup
and will be denoted by $\{\mathbb{H}_t^{\ab}\}$. We have, for $f \in L^2(d\m_{\ab})$ and $t>0$,
\begin{equation} \label{Hser}
\mathbb{H}_t^{\ab}f = \exp\Big(-t\sqrt{\mathbb{J}^{\ab}}\Big)f = \sum_{n=0}^{\infty} \exp\Big(-t\sqrt{\lambda_{\langle n \rangle }^{\ab}}\Big)
	\langle f, \Phi_n^{\ab} \rangle_{d\m_{\ab}} \Phi_n^{\ab},
\end{equation}
the convergence being in $L^2(d\m_{\ab})$. 
In fact, the last series converges pointwise for any $f \in L^p(wd\m_{\ab})$, $w \in A_p^{\ab}$, $1\le p < \infty$, and defines a smooth function of $(t,\theta) \in (0,\infty)\times (-\pi,\pi)$. Thus the series \eqref{Hser} can be regarded as a definition of $\{\mathbb{H}_t^{\ab}\}$ on the weighted spaces $L^p(wd\m_{\ab})$, $w\in A_p^{\ab}$, $1\le p < \infty$.

To give a brief justification of the pointwise convergence note that the normalized Jacobi polynomials
satisfy the estimate (see \cite[(7.32.2)]{Sz})
\begin{equation} \label{estjac}
|\P_n^{\ab}(\theta)| \lesssim (n+1)^{\alpha+\beta+2}, \qquad \theta \in (-\pi,\pi), \quad n \ge 0.
\end{equation}
Moreover, we have the differentiation rule (cf. \cite[(4.21.7)]{Sz})
\begin{equation} \label{jacdiff}
\frac{d}{d\theta} \P_n^{\ab}(\theta) = -\frac{1}{2}\sqrt{n(n+\alpha+\beta+1)}\, \sin\theta \;
	\P_{n-1}^{\alpha+1,\beta+1}(\theta), \qquad n \ge 0;
\end{equation}
here we use the convention that $\P^{\ab}_{k}\equiv 0$ if $k<0$.
From \eqref{estjac} and \eqref{jacdiff} it follows that 
\begin{equation} \label{jacdiffest}
\Big|\frac{d}{d\theta} \P_n^{\ab}(\theta)\Big|\lesssim n^{\alpha+\beta+5}, \qquad \theta \in (-\pi,\pi), \quad n \ge 0.
\end{equation}
Now combining \eqref{Phi} with \eqref{estjac} and \eqref{jacdiffest} we get
\begin{equation} \label{estsymm}
|\Phi_n^{\ab}(\theta)| \lesssim (n+1)^{\alpha+\beta+4}, \qquad \theta \in (-\pi,\pi), \quad n \ge 0.
\end{equation}
Further, using H\"older's inequality, we see that the Fourier-Jacobi coefficients of any 
$f\in L^p(wd\m_{\ab})$, $w\in A_p^{\ab}$, $1\le p < \infty$, grow at most polynomially, in the sense that
\begin{equation} \label{growthFJnew}
\big|\langle f, \Phi_n^{\ab}\rangle_{d\m_{\ab}}\big| \lesssim \|f\|_{L^p(wd\m_{\ab})} 
	(n+1)^{\alpha + \beta + 4}, \qquad n \ge 0.
\end{equation}
Therefore the series in \eqref{Hser} converges absolutely and uniformly on compact subsets of $(0,\infty)\times (-\pi,\pi)$, thanks to
the exponentially decreasing factor in $n$. Moreover, term by term differentiation of this series, with the aid of  \eqref{Phi}, \eqref{estjac} and \eqref{jacdiff}, shows that it defines a smooth function of $(t,\theta)\in (0,\infty)\times (-\pi,\pi)$.

The integral representation of $\{\mathbb{H}_t^{\ab}\}$, valid on the weighted $L^p$ spaces mentioned above
(see for instance \cite{No} for the relevant arguments), is
$$
\mathbb{H}_t^{\ab}f(\theta) = \int_{-\pi}^{\pi} \mathbb{H}_t^{\ab}(\theta,\varphi)f(\varphi)\, d\m_{\ab}(\varphi),
	\qquad \theta \in (-\pi,\pi), \quad t>0,
$$
with the symmetrized Jacobi-Poisson kernel
\begin{equation}
\mathbb{H}_t^{\ab}(\theta,\varphi) =\sum_{n=0}^{\infty} \exp\Big(-t\sqrt{\lambda_{\langle n \rangle }^{\ab}}\Big)
	\Phi_n^{\ab}(\theta)\Phi_n^{\ab}(\varphi).
\end{equation}
The last series converges absolutely and defines a smooth function of $(t,\theta,\varphi)\in (0,\infty)\times(-\pi,\pi)^2$,
as can be seen by \eqref{Phi}, \eqref{estjac}, \eqref{jacdiff} and term by term differentiation. 

We now give strict definitions in $L^2(d\m_{\ab})$ of our main objects of interest. For $f\in L^2(d\m_{\ab})$ we define
\begin{itemize}
\item[(i)] symmetrized Riesz-Jacobi transforms of order $N$
$$
\mathbb{R}_N^{\ab}f = \sum_{n=0}^{\infty} \big(\lambda_{\langle n \rangle}^{\ab}\big)^{-N/2}	\langle f, \Phi_n^{\ab} \rangle_{d\m_{\ab}} D^N \Phi_n^{\ab}, 
$$
where $N=1,2,\ldots$;

\item[(ii)] the symmetrized Jacobi-Poisson semigroup maximal operator
$$
\mathbb{H}^{\ab}_* f(\theta) = \big\|\mathbb{H}_t^{\ab}f(\theta)\big\|_{L^{\infty}(dt)}, \qquad \theta \in (-\pi,\pi);
$$

\item[(iii)]  symmetrized mixed square functions of arbitrary orders $M,N$
$$
\mathbb{G}_{M,N}^{\ab}(f)(\theta) = \big\|\partial_t^M D^N \mathbb{H}_t^{\ab}f(\theta)\big\|_{L^2(t^{2M+2N-1}dt)},\qquad \theta\in(-\pi,\pi),
$$
where $M,N=0,1,2,\ldots$ and $M+N>0$;

\item[(iv)] multipliers of Laplace and Laplace-Stieltjes transform type
$$\mathbb{M}^{\ab}f(\theta)=\sum_{n=0}^{\infty} m\Big(\sqrt{\lambda_{\langle n \rangle}^{\ab}}\Big)	\langle f, \Phi_n^{\ab} \rangle_{d\m_{\ab}}\Phi_n^{\ab},$$
where $m=m_{\phi}$ or $m=m_{\nu}$, with $m_{\phi}$ and $m_{\nu}$ as in Definition \ref{def}.

\end{itemize}

The operators $\mathbb{R}^{\ab}_N$ and $\mathbb{M}^{\ab}$ are indeed well defined on $L^2(d\m_{\ab})$ by the above formulas, since the series converge in $L^2(d\m_{\ab})$ and, moreover, the operators are bounded on $L^2(d\m_{\ab})$. This is clear in case of $\mathbb{M}^{\ab}$, because of the boundedness of $m$ on $\spectrum\sqrt{\mathbb{J}^{\ab}}$. The case of the Riesz transforms is covered by \cite[Proposition 4.3]{Symmetrized}. Note that if $\alpha+\beta=-1$ then $0=\lambda_0^{\ab}$ is the eigenvalue of $\mathbb{J}^{\ab}$ and we actually consider (cf. \cite[(4.3)]{Symmetrized}) the operators
\begin{align*}
\mathbb{R}_N^{\ab} \Pi_0 f & = \sum_{n=1}^{\infty} \big(\lambda_{\langle n \rangle}^{\ab}\big)^{-N/2}	\langle f, \Phi_n^{\ab} \rangle_{d\m_{\ab}} D^N \Phi_n^{\ab}, \qquad N=1,2,\ldots ,
\end{align*}
where $\Pi_0$ is the orthogonal projection onto $\{\Phi_0^{\ab}\}^{\perp}$. However, since $D\Phi_0^{\ab}$ vanishes, we do not distinguish between $\mathbb{R}_N^{\ab}$ and $\mathbb{R}_N^{\ab}\Pi_0$ and always denote the Riesz operators by $\mathbb{R}_N^{\ab}$. As for the remaining operators $\mathbb{H}_{*}^{\ab}$ and $\mathbb{G}_{M,N}^{\ab},$ their definitions are understood pointwise and make sense for general $f\in L^p(w d\mu_{\ab}), w \in A_p^{\ab}, 1\le p< \infty,$ since $\mathbb{H}_t^{\ab}f(\theta)$ is a smooth function of $(t,\theta)\in (0,\infty)\times(-\pi,\pi).$

The main result of the paper reads as follows.
\begin{thm} \label{thm:main}
Let $\alpha, \beta\ge-1/2$ and $w$ be an even weight on $(-\pi,\pi)$. Then the maximal operator $\mathbb{H}_{*}^{\ab}$ and the square functions $\mathbb{G}_{M,N}^{\ab}$, $M,N=0,1,2,\ldots$, $M+N>0$, are bounded on $L^p(w d\m_{\ab})$, $w\in A_p^{\ab}$, $1<p<\infty$ and from $L^1(w d\m_{\ab})$ to weak $L^1(w d\m_{\ab})$, $w \in A_1^{\ab}$.
Furthermore, the Riesz transforms $\mathbb{R}_N^{\ab}$, $N=1,2,\ldots$ and the multipliers  $\mathbb{M}^{\ab}$ extend uniquely to bounded linear operators on $L^p(w d\m_{\ab})$, $w\in A_p^{\ab}$, $1<p<\infty$ and from $L^1(w d\m_{\ab})$ to weak $L^1(w d\m_{\ab})$, $w \in A_1^{\ab}$.
\end{thm}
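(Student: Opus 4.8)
The plan is to reduce everything to the interval $(0,\pi)$ and then apply the vector-valued Calder\'on--Zygmund theory, following the scheme already sketched in the Introduction.

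\emph{Step 1: descent to $(0,\pi)$.} Fix an even weight $w$ and write $w^{+}$ for its restriction to $(0,\pi)$; recall $w\in A_p^{\ab}$ iff $w^{+}\in(A_p^{\ab})^{+}$. Since $w$ is even, $L^p(wd\m_{\ab})$ is the direct sum of its subspace of even functions and its subspace of odd functions, and $f\mapsto\sqrt{2}\,f^{+}$ maps the even subspace isometrically onto $L^p(w^{+}d\m_{\ab}^{+})$, while the odd subspace is identified with the same space through the conjugate system associated with $\delta\P_k^{\ab}\big/\big(\lambda_k^{\ab}-\lambda_0^{\ab}\big)^{1/2}$. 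Because $\Phi_{2k}^{\ab}$ is a multiple of the even function $\P_k^{\ab}$, $\Phi_{2k-1}^{\ab}$ is a multiple of the odd function $\delta\P_k^{\ab}$, and $D$ interchanges even and odd functions, acting on even ones as $\delta=d/d\theta$ and on odd ones essentially as $\delta^{*}$, each operator from (i)--(iv) is transformed, under these identifications, into a $2\times2$ array of ``Jacobi-type'' operators on $((0,\pi),d\m_{\ab}^{+})$: higher order Riesz--Jacobi transforms given by alternating products $\cdots\delta^{*}\delta$ or $\cdots\delta\delta^{*}$ composed with $(\J^{\ab})^{-N/2}$, the Jacobi--Poisson maximal operator, mixed $g$-functions, and Laplace/Laplace--Stieltjes multipliers of $\sqrt{\J^{\ab}}$. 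This is the reduction made precise in Theorem \ref{thm:main'}, carried out essentially as in \cite{imag}; modulo routine bookkeeping with the eigenvalue shift when $\alpha+\beta=-1$, it reduces the proof to showing the auxiliary operators are bounded on $L^p(w^{+}d\m_{\ab}^{+})$, $1<p<\infty$, $w^{+}\in(A_p^{\ab})^{+}$, and of weak type $(1,1)$ for $p=1$, $w^{+}\in(A_1^{\ab})^{+}$.

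\emph{Step 2: Calder\'on--Zygmund framework on $(0,\pi)$.} One works in the space of homogeneous type $((0,\pi),d\m_{\ab}^{+},|\cdot|)$, which is doubling with $\m_{\ab}^{+}(B(\theta,r))\simeq r\,(\theta+r)^{2\alpha+1}(\pi-\theta+r)^{2\beta+1}$. For each auxiliary operator one verifies (a) $L^2(d\m_{\ab}^{+})$-boundedness --- immediate from the spectral theorem for the multipliers, from the factorization $\J^{\ab}=\delta^{*}\delta+\big(\tfrac{\alpha+\beta+1}{2}\big)^{2}$ and its iterates as in \cite[Proposition 4.3]{Symmetrized} for the Riesz transforms, and a standard Littlewood--Paley/spectral computation for the maximal operator and the $g$-functions; this is the content of Section \ref{sec:L^2} --- and (b) association, in the Calder\'on--Zygmund sense, with a kernel $K(\theta,\varphi)$ valued in $\mathbb C$, in $L^{\infty}(dt)$ (for $\mathbb{H}_*^{\ab}$), or in $L^2(t^{2M+2N-1}dt)$ (for $\mathbb{G}_{M,N}^{\ab}$), given through the subordination formulas
\[
\frac{1}{\Gamma(N)}\int_0^{\infty} t^{N-1}\,\delta_{\theta}^{N}H_t^{\ab}(\theta,\varphi)\,dt,\qquad
-\int_0^{\infty}\phi(t)\,\partial_t H_t^{\ab}(\theta,\varphi)\,dt,\qquad
\int_{(0,\infty)} H_t^{\ab}(\theta,\varphi)\,d\nu(t),
\]
and $\big\{\partial_t^{M}\delta_{\theta}^{N}H_t^{\ab}(\theta,\varphi)\big\}_{t>0}$, together with the analogous expressions involving $\delta^{*}$ and the conjugate Jacobi--Poisson kernel; here $H_t^{\ab}$ denotes the classical Jacobi--Poisson kernel on $(0,\pi)$. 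Once the standard size and smoothness estimates
\[
\big\|K(\theta,\varphi)\big\|\lesssim\frac{1}{\m_{\ab}^{+}\big(B(\theta,|\theta-\varphi|)\big)},\qquad
\big\|\partial_{\theta}K(\theta,\varphi)\big\|+\big\|\partial_{\varphi}K(\theta,\varphi)\big\|\lesssim\frac{1}{|\theta-\varphi|}\cdot\frac{1}{\m_{\ab}^{+}\big(B(\theta,|\theta-\varphi|)\big)}
\]
(norms taken in the relevant value space) are established, the general vector-valued Calder\'on--Zygmund theorem on the space of homogeneous type delivers boundedness on $L^p(w^{+}d\m_{\ab}^{+})$ for $1<p<\infty$ and the weak type $(1,1)$ bound for $p=1$; the sublinear operators $\mathbb{H}_*^{\ab}$ and $\mathbb{G}_{M,N}^{\ab}$ are handled via their Banach-valued linearizations.

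\emph{Step 3: the kernel estimates --- the main obstacle.} This is the core of the argument (Section \ref{sec:ker}). Everything reduces to pointwise bounds for $\partial_t^{a}\delta_{\theta}^{b}\partial_{\varphi}^{c}H_t^{\ab}(\theta,\varphi)$, which are then inserted into the $t$-integrals (Riesz transforms, Laplace multipliers) or controlled in the $L^{\infty}(dt)$ or $L^2(t^{2M+2N-1}dt)$ norm ($g$-functions). The essential tool is the representation of $H_t^{\ab}$ from \cite[Proposition 4.1]{NoSjogren}, an indirect consequence of the Dijksma--Koornwinder product formula \cite{DK}; since the latter requires $\alpha,\beta\ge-1/2$, so does the theorem. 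Granting this representation, the needed pointwise bounds for the Poisson kernel and its derivatives follow from the kernel-estimation technique of \cite[Section 4]{NoSjogren}; the real work is to push those estimates through the extra $t$- and (formal adjoint) $\theta$-derivatives, to carry out the subordinating integrations uniformly as $\varphi\to\theta$ and as $\theta,\varphi$ approach the endpoints $0,\pi$, and to match the outcome against the explicit behaviour of $\m_{\ab}^{+}\big(B(\theta,|\theta-\varphi|)\big)$. One further point: for the Laplace--Stieltjes multiplier near $t=0$ one exploits the exponential off-diagonal decay of $H_t^{\ab}(\theta,\varphi)$ in $1/t$, while for large $t$ the hypothesis $\int_{(0,\infty)}e^{-t|\alpha+\beta+1|/2}\,d|\nu|(t)<\infty$ suffices.
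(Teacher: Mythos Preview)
Your proposal is correct and follows essentially the same route as the paper: reduce to $(0,\pi)$ via the even/odd split (the paper's Theorem \ref{thm:main'}), then prove each auxiliary operator is a (vector-valued) Calder\'on--Zygmund operator on $((0,\pi),d\m_{\ab}^{+},|\cdot|)$ by checking $L^2$-boundedness, kernel association, and standard estimates based on the Dijksma--Koornwinder representation from \cite[Proposition~4.1]{NoSjogren}. One small technical point you elide: for the vector-valued kernels the paper verifies the difference conditions \eqref{sm1}--\eqref{sm2} rather than a gradient bound directly, passing through the Mean Value Theorem and the comparability Lemma~\ref{lem:comp}; also, the ``conjugate'' kernel $\widetilde{H}_t^{\ab}$ is handled via the identity $\widetilde{H}_t^{\ab}(\theta,\varphi)=\tfrac{1}{4}\sin\theta\sin\varphi\,H_t^{\alpha+1,\beta+1}(\theta,\varphi)$, which reduces its estimates to the untilded ones with shifted parameters.
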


The task of proving the above theorem can be reduced to showing similar mapping properties for suitably defined `restricted' operators related to the smaller measure space $((0,\pi),d\mu_{\ab}^{+})$. The details are as follows. Observe that, as a consequence of \eqref{Phi} and \eqref{jacdiff}, we have the decomposition
\begin{align}\nonumber 
\mathbb{H}_t^{\ab}(\theta,\varphi)&=\frac{1}{2}\sum_{n=0}^{\infty} \exp\Big(-t\sqrt{\lambda_{n}^{\ab}}\Big)\P_{n}^{\alpha,\beta}(\theta)\P_{n}^{\alpha,\beta}(\varphi)\\
&\quad+\frac{1}{8}\sin\theta\,\sin\varphi\sum_{n=0}^{\infty} \exp\Big(-t\sqrt{\lambda_{n+1}^{\ab}}\Big)\P_{n}^{\alpha+1,\beta+1}(\theta)\P_{n}^{\alpha+1,\beta+1}(\varphi)\nonumber\\
&\equiv H_{t}^{\ab}(\theta,\varphi)+\widetilde{H}_{t}^{\ab}(\theta,\varphi).\label{deco}
\end{align}
The restriction of $H_{t}^{\ab}(\theta,\varphi)$ to $\theta,\varphi\in (0,\pi)$ coincides, up to the factor $1/2$, with the standard (non-symmetrized) Jacobi-Poisson kernel considered in \cite{NoSjogren}. Furthermore, since each $\P_{n}^{\alpha,\beta}$ is an even function on $(-\pi,\pi),$ we see that $H_{t}^{\ab}(\theta,\varphi)$ and $\widetilde{H}_{t}^{\ab}(\theta,\varphi)$ are even and odd, respectively, functions both of $\theta$ and $\varphi$. For further reference, notice also that, since $\lambda_{n+1}^{\ab}=\lambda_{n}^{\alpha+1,\beta+1}$, we have 
\begin{equation}\label{jadro}
\widetilde{H}_{t}^{\ab}(\theta,\varphi)=\frac{1}{4}\sin\theta\,\sin\varphi\,H_{t}^{\alpha+1,\beta+1}(\theta,\varphi).
\end{equation}

Next, we consider the operators acting on $L^{2}(d\mu_{\ab}^{+})$ and defined by
\begin{align*}
(H_t^{\ab})^{+}f &= \sum_{n=0}^{\infty} \exp\Big(-t\sqrt{\lambda_{n}^{\ab}}\Big)\langle f, \Phi_{2n}^{\ab} \rangle_{d\m_{\ab}^{+}} \Phi_{2n}^{\ab},\qquad f\in L^{2}(d\mu_{\ab}^{+}),\\
(\widetilde{H}_t^{\ab})^{+}f &= \sum_{n=0}^{\infty} \exp\Big(-t\sqrt{\lambda_{n+1}^{\ab}}\Big)\langle f, \Phi_{2n+1}^{\ab} \rangle_{d\m_{\ab}^{+}} \Phi_{2n+1}^{\ab},\qquad f\in L^{2}(d\mu_{\ab}^{+}),
\end{align*}
for $t>0.$ The integral representations of $(H_t^{\ab})^{+}$ and $(\widetilde{H}_t^{\ab})^{+}$ are
\begin{align*}
(H_t^{\ab})^{+}f(\theta) &=\int_0^{\pi} H_t^{\ab}(\theta,\varphi)f(\varphi)\, d\m_{\ab}^{+}(\varphi),
	\qquad \theta \in (0,\pi), \quad t>0 ,\\
(\widetilde{H}_t^{\ab})^{+}f(\theta) &=\int_0^{\pi} \widetilde{H}_t^{\ab}(\theta,\varphi)f(\varphi)\, d\m_{\ab}^{+}(\varphi),
	\qquad \theta \in (0,\pi), \quad t>0.
\end{align*}
Similarly as in the case of $\mathbb{H}_{t}^{\ab}$, we conclude that for any $f\in L^{p}(wd\mu_{\ab}^{+})$, $w\in (A_{p}^{\ab})^{+}$, $1\le p< \infty,$ the series defining $(H_t^{\ab})^{+}$ and $(\widetilde{H}_t^{\ab})^{+}$ converge pointwise and give rise to smooth functions of $(t,\theta)\in (0,\infty)\times(0,\pi).$

For $N=1,2,\dots ,$ denote 
$$
\de=\underbrace{\ldots \delta \delta^* \delta \delta^* \delta}_{N\; \textrm{components}},\qquad \ko=\underbrace{\ldots \delta^* \delta \delta^* \delta \delta^*}_{N\; \textrm{components}}.
$$ 
These derivatives correspond to the action of $D^N$ on even and odd functions, respectively. Now we define the `restricted' operators we shall investigate:
\begin{itemize}
\item[(i)]
\begin{align*}
(R_N^{\ab})^{+}f&=\sum_{n=0}^{\infty} (\lambda_{n}^{\ab})^{-N/2}	\langle f, \Phi_{2n}^{\ab} \rangle_{d\m_{\ab}^{+}} \de \Phi_{2n}^{\ab},\qquad f\in L^{2}(d\mu_{\ab}^{+}),\\
(\widetilde{R}_N^{\ab})^{+}f&=\sum_{n=0}^{\infty} (\lambda_{n+1}^{\ab})^{-N/2}	\langle f, \Phi_{2n+1}^{\ab} \rangle_{d\m_{\ab}^{+}} \ko \Phi_{2n+1}^{\ab},\qquad f\in L^{2}(d\mu_{\ab}^{+}),
\end{align*}

\item[(ii)]
\begin{align*}
({H}^{\ab}_*)^{+} f(\theta) &=\big\|(H_t^{\ab})^{+}f(\theta)\big\|_{L^{\infty}(dt)},\\
(\widetilde{H}^{\ab}_{*})^{+} f(\theta) &=\big\|(\widetilde{H}_t^{\ab})^{+}f(\theta)\big\|_{L^{\infty}(dt)},
\end{align*}

\item[(iii)]
\begin{align*}
({G_{M,N}^{\ab}})^{+}(f)(\theta) &= \big\|\partial_t^M \de (H_t^{\ab})^{+}f(\theta)\big\|_{L^2(t^{2M+2N-1}dt)},\\
(\widetilde{G}_{M,N}^{\ab})^{+}(f)(\theta) &= \big\|\partial_t^M \ko (\widetilde{H}_t^{\ab})^{+}f(\theta)\big\|_{L^2(t^{2M+2N-1}dt)},
\end{align*}

\item[(iv)]
\begin{align*}
(M^{\ab})^{+}f&=\sum_{n=0}^{\infty} m\Big(\sqrt{\lambda_{n}^{\ab}}\Big)	\langle f, \Phi_{2n}^{\ab} \rangle_{d\m_{\ab}^{+}}\Phi_{2n}^{\ab},\qquad f\in L^{2}(d\mu_{\ab}^{+}),\\
(\widetilde{M}^{\ab})^{+}f&=\sum_{n=0}^{\infty} {m}\Big(\sqrt{\lambda_{n+1}^{\ab}}\Big)	\langle f, \Phi_{2n+1}^{\ab} \rangle_{d\m_{\ab}^{+}}\Phi_{2n+1}^{\ab},\qquad f\in L^{2}(d\mu_{\ab}^{+}).
\end{align*}

\end{itemize}

We are now in a position to reduce the proof of Theorem \ref{thm:main} to showing the following statement. 

\begin{thm} \label{thm:main'}

Let $\alpha, \beta\ge-1/2$. Then the operators $({H}_{*}^{\ab})^{+}$, $(\widetilde{H}_{*}^{\ab})^{+}$, $(G_{M,N}^{\ab})^{+}$, $(\widetilde{G}_{M,N}^{\ab})^{+}$, $M,N=0,1,2,\ldots$, $M+N>0$, are bounded on $L^p(w d\m_{\ab}^{+})$, $w \in (A_p^{\ab})^{+}$, $1<p<\infty$, and from $L^1(w d\m_{\ab}^{+})$ 
to weak $L^1(w d\m_{\ab}^{+})$, $w \in (A_1^{\ab})^{+}$.
Furthermore, the operators $({R}_N^{\ab})^{+}$, $(\widetilde{R}_N^{\ab})^{+}$ $N=1,2,\ldots$, $(M^{\ab})^{+}$ and $(\widetilde{M}^{\ab})^{+}$ extend uniquely to bounded linear operators on $L^p(w d\m_{\ab}^{+})$, $w\in (A_p^{\ab})^{+}$, $1<p<\infty$, and from $L^1(w d\m_{\ab}^{+})$ 
to weak $L^1(w d\m_{\ab}^{+})$, $w\in (A_1^{\ab})^{+}$.
\end{thm}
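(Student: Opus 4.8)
The plan is to identify each of the eight operators in Theorem~\ref{thm:main'} with a (scalar- or vector-valued) Calder\'on--Zygmund operator in the sense of the space of homogeneous type $((0,\pi),d\mu_{\ab}^{+},|\cdot|)$ --- which is indeed of homogeneous type, since $\mu_{\ab}^{+}(B(\theta,r))\simeq r(\theta+r)^{2\alpha+1}(\pi-\theta+r)^{2\beta+1}$ is doubling --- and then to invoke the general weighted, vector-valued Calder\'on--Zygmund theory. Let $\mathbb{B}=\mathbb{C}$ for the Riesz transforms $(R_N^{\ab})^{+},(\widetilde{R}_N^{\ab})^{+}$ and the multipliers $(M^{\ab})^{+},(\widetilde{M}^{\ab})^{+}$, let $\mathbb{B}=L^{\infty}(dt)$ for the maximal operators $(H^{\ab}_{*})^{+},(\widetilde{H}^{\ab}_{*})^{+}$, and $\mathbb{B}=L^{2}(t^{2M+2N-1}dt)$ for the square functions. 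For each such operator $T$ it then suffices to check: \emph{(a)} $T$ is bounded from $L^{2}(d\mu_{\ab}^{+})$ to $L^{2}_{\mathbb{B}}(d\mu_{\ab}^{+})$; and \emph{(b)} $T$ is associated with a $\mathbb{B}$-valued kernel $K(\theta,\varphi)$, defined off the diagonal, satisfying the standard estimates
\begin{align*}
\|K(\theta,\varphi)\|_{\mathbb{B}}&\lesssim\frac1{\mu_{\ab}^{+}(B(\theta,|\theta-\varphi|))},\\
\|\partial_{\theta}K(\theta,\varphi)\|_{\mathbb{B}}+\|\partial_{\varphi}K(\theta,\varphi)\|_{\mathbb{B}}&\lesssim\frac1{|\theta-\varphi|}\cdot\frac1{\mu_{\ab}^{+}(B(\theta,|\theta-\varphi|))}.
\end{align*}
Granting \emph{(a)} and \emph{(b)}, the asserted $L^{p}(w\,d\mu_{\ab}^{+})$ bounds for $w\in(A_{p}^{\ab})^{+}$, $1<p<\infty$, and the weak type $(1,1)$ bounds with respect to $w\in(A_{1}^{\ab})^{+}$ follow at once, as does the unique bounded extension of the a priori $L^{2}$-defined operators. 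This is exactly the scheme of \cite{NoSjogren}, so in the non-tilde cases part of the work can be quoted from there, while the tilde cases will be linked, via \eqref{jadro}, to kernel bounds at the shifted parameters $(\alpha+1,\beta+1)$.

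Next I would dispose of \emph{(a)}. For $(M^{\ab})^{+}$ and $(\widetilde{M}^{\ab})^{+}$ this is immediate from the spectral theorem, since $m=m_{\phi}$ or $m=m_{\nu}$ is bounded on $\spectrum\sqrt{\mathbb{J}^{\ab}}$ (for $m_{\nu}$ one uses $\int_{(0,\infty)}e^{-t|\alpha+\beta+1|/2}\,d|\nu|(t)<\infty$). For $(R_N^{\ab})^{+}$ and $(\widetilde{R}_N^{\ab})^{+}$ it follows from the factorization $\mathbb{J}^{\ab}=-D^{2}+(\tfrac{\alpha+\beta+1}2)^{2}$ together with \cite[Proposition~4.3]{Symmetrized}; equivalently, $\de$ and $\ko$ send $L^{2}$-normalized eigenfunctions to $L^{2}$-normalized eigenfunctions up to eigenvalue factors cancelled by the multipliers $(\lambda^{\ab})^{-N/2}$. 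For the square functions, Parseval reduces matters to the elementary identity $\int_{0}^{\infty}|\partial_{t}^{M}e^{-ts}|^{2}\,t^{2M+2N-1}\,dt=\Gamma(2M+2N)2^{-2M-2N}s^{-2N}$, again compensated by the eigenvalue factors produced by $\de,\ko$. For $(H^{\ab}_{*})^{+}$ and $(\widetilde{H}^{\ab}_{*})^{+}$ one uses that $\{(H_t^{\ab})^{+}\}$ and $\{(\widetilde{H}_t^{\ab})^{+}\}$ are, up to normalization, pieces of a symmetric contraction semigroup subordinated to a heat semigroup, so Stein's maximal theorem applies; alternatively the bound follows a posteriori from \emph{(b)}. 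All of this is routine and, together with the verification of the kernel association in \emph{(b)}, would be carried out in Section~\ref{sec:L^2}.

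The core of the argument is the verification of the standard estimates in \emph{(b)}, to be done in Section~\ref{sec:ker}. Using the subordination/Laplace formulas $(\lambda_n^{\ab})^{-N/2}=\frac1{\Gamma(N)}\int_{0}^{\infty}t^{N-1}e^{-t\sqrt{\lambda_n^{\ab}}}\,dt$, $m_{\phi}(z)=-\int_{0}^{\infty}\partial_{t}(e^{-tz})\phi(t)\,dt$ and $m_{\nu}(z)=\int_{(0,\infty)}e^{-tz}\,d\nu(t)$, each kernel is expressed through the Jacobi--Poisson kernel: the Riesz kernel equals $\frac1{\Gamma(N)}\int_{0}^{\infty}t^{N-1}\,\de\, H_t^{\ab}(\theta,\varphi)\,dt$ (derivatives in $\theta$), the Laplace-type multiplier kernel $-\int_{0}^{\infty}\phi(t)\,\partial_{t}H_t^{\ab}(\theta,\varphi)\,dt$, the Laplace--Stieltjes one $\int_{(0,\infty)}H_t^{\ab}(\theta,\varphi)\,d\nu(t)$, the maximal kernel $\|H_t^{\ab}(\theta,\varphi)\|_{L^{\infty}(dt)}$, and the square-function kernel $\|\partial_{t}^{M}\de\, H_t^{\ab}(\theta,\varphi)\|_{L^{2}(t^{2M+2N-1}dt)}$; analogously with $\widetilde{H}_t^{\ab}$ and $\ko$ for the tilde operators. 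The decisive tool is the integral representation of $H_t^{\ab}(\theta,\varphi)$ from \cite[Proposition~4.1]{NoSjogren}, a consequence of the Dijksma--Koornwinder product formula \cite{DK} and hence valid precisely for $\alpha,\beta\ge-1/2$ (this is where the hypothesis enters); it displays $H_t^{\ab}(\theta,\varphi)$ as a double average over two one-dimensional measures of an explicit elementary kernel depending on $t$ and on a quantity $\q=\q(\theta,\varphi,u,v)$. Differentiating this representation in $t$, applying $\delta_{\theta},\delta_{\theta}^{*}$ iteratively, and estimating the outcome by the method of \cite[Section~4]{NoSjogren} --- sharp control of $\q$ and of $\mu_{\ab}^{+}(B(\theta,|\theta-\varphi|))$ in terms of $\theta,\varphi$, together with the cancellation inherent in $\delta^{*}$ --- yields the size and gradient bounds for all eight $\mathbb{B}$-valued kernels; for the Laplace--Stieltjes multiplier one also invokes a pointwise bound $0\le H_t^{\ab}(\theta,\varphi)\lesssim e^{-t|\alpha+\beta+1|/2}/\mu_{\ab}^{+}(B(\theta,|\theta-\varphi|))$ and the assumption on $\nu$. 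For the tilde operators one transfers the bounds from $(\alpha+1,\beta+1)$ to $(\alpha,\beta)$ via \eqref{jadro}, using $\sin\theta\sin\varphi\lesssim(\theta+|\theta-\varphi|)^{2}(\pi-\theta+|\theta-\varphi|)^{2}$ and $\mu_{\alpha+1,\beta+1}^{+}(B(\theta,r))\simeq(\theta+r)^{2}(\pi-\theta+r)^{2}\mu_{\ab}^{+}(B(\theta,r))$, with $\cos\theta$ absorbing the derivatives falling on $\sin\theta$ and the parameter dependence of $\delta^{*}$ contributing only lower-order terms with bounded coefficients.

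I expect \emph{(b)} to be by far the main obstacle. Iterating $\delta$ and $\delta^{*}$ --- whose coefficients $\cot\tfrac\theta2,\tan\tfrac\theta2$ blow up at $0,\pi$, precisely where $\mu_{\ab}^{+}$ degenerates --- against the Poisson kernel, then performing the $t$-integrations or $t$-differentiations and taking $\mathbb{B}$-norms, produces a large family of terms, each of which must be shown to obey the standard estimates uniformly in $\theta,\varphi$. Keeping simultaneous track of the diagonal singularity $\theta=\varphi$, the degeneracy of the measure at the endpoints, and the singular coefficients is the delicate point; the explicit Poisson kernel representation and the systematic kernel-estimation bookkeeping of \cite[Section~4]{NoSjogren} are what render this feasible. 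Once \emph{(a)} and \emph{(b)} are established, Theorem~\ref{thm:main'} is immediate from the general Calder\'on--Zygmund machinery.
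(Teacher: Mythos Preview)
Your proposal is correct and follows the same overall scheme as the paper: reduce Theorem~\ref{thm:main'} to Theorem~\ref{thm:main2} (the operators are Calder\'on--Zygmund), which in turn splits into Proposition~\ref{prop:L2} ($L^2$-boundedness), Proposition~\ref{prop:assoc} (kernel association), and Theorem~\ref{thm:stand} (standard estimates).

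Two points where the paper proceeds a bit differently are worth noting. First, for the maximal operators the paper takes $\mathbb{B}$ not as $L^{\infty}(dt)$ but as the closed separable subspace $\mathbb{X}\subset L^{\infty}(dt)$ of continuous functions on $(0,\infty)$ with finite limits at $0^{+}$ and $\infty$; this is needed so that Bochner integration and the vector-valued Calder\'on--Zygmund machinery apply cleanly, and correspondingly the smoothness conditions for the vector-valued kernels are stated as the difference conditions \eqref{sm1}--\eqref{sm2} rather than as gradient bounds (the reduction to gradient-type estimates goes through the Mean Value Theorem together with Lemma~\ref{lem:comp}). Second, for the higher-order Riesz and square-function kernels the paper does \emph{not} iterate $\delta,\delta^{*}$ directly as you suggest; instead it uses the Laplace-type identities $\delta_2^{\textrm{even}}H_t^{\ab}=\partial_t^2 H_t^{\ab}-\lambda_0^{\ab}H_t^{\ab}$ and $\delta_2^{\textrm{odd}}\widetilde{H}_t^{\ab}=\partial_t^2\widetilde{H}_t^{\ab}-\lambda_0^{\ab}\widetilde{H}_t^{\ab}$ to convert $\de$ and $\ko$ into combinations of $\partial_t^{2k}$ (plus at most one $\partial_\theta$ or one $\delta^{*}$). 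This replaces your ``large family of terms'' with a short sum governed by Lemma~\ref{4.8}, and entirely sidesteps the bookkeeping of the singular coefficients $\cot\tfrac\theta2,\tan\tfrac\theta2$ under iteration.
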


For the sake of brevity, we give a detailed description of the reduction only in the case of $\mathbb{R}_{N}^{\ab},$ adapting suitably the arguments from the proof of
\cite[Theorem 1]{imag}. The remaining cases are treated in a similar manner and are left to the reader.

Let $1\le p< \infty$ be fixed and let $w$ be an even weight on $(-\pi,\pi)$ such that $w^{+}\in (A_p^{\ab})^{+}$ or, equivalently, $w\in A_p^{\ab}.$  First, we decompose $\mathbb{R}_{N}^{\ab}$ by splitting the summation in the defining series over even and odd $n$, getting
$$\mathbb{R}_{N}^{\ab}=R_{N}^{\ab}+\widetilde{R}_{N}^{\ab},$$
where 
\begin{align*}R_{N}^{\ab}f&=\sum_{n=0}^{\infty}  (\lambda_{n}^{\ab})^{-N/2}
	\langle f, \Phi_{2n}^{\ab} \rangle_{d\m_{\ab}} D^{N}\Phi_{2n}^{\ab},\\
	\widetilde{R}_{N}^{\ab}f&=\sum_{n=0}^{\infty}  (\lambda_{n+1}^{\ab})^{-N/2}
	\langle f, \Phi_{2n+1}^{\ab} \rangle_{d\m_{\ab}} D^{N}\Phi_{2n+1}^{\ab}.
	\end{align*}

Next, take an arbitrary function $f\in L^{2}(d\mu_{\ab})$ and split it into its even and odd parts,
$f=f_{\textrm{even}}+f_{\textrm{odd}}$. Notice, that for $N,n\ge 0,$ $D^{N}\Phi_{n}^{\ab}$ is an even (odd) function if and only if $n+N$ is even (odd). Consequently,
\begin{align*}
\mathbb{R}_{N}^{\ab}f=R_{N}^{\ab}f_{\textrm{even}}+\widetilde{R}_{N}^{\ab}f_{\textrm{odd}},\qquad f\in L^{2}(d\mu_{\ab}),
\end{align*}
and for $N$ even (odd), $R_{N}^{\ab}f_{\textrm{even}}$ is an even (odd) function and $\widetilde{R}_{N}^{\ab}f_{\textrm{odd}}$ is odd (even). Furthermore, 
 \begin{align*}
 \langle f_{\textrm{even}}, \Phi_{2n}^{\ab} \rangle_{d\m_{\ab}}&=2\langle f_{\textrm{even}}^{+}, \Phi_{2n}^{\ab} \rangle_{d\m_{\ab}^{+}},\\
 \langle f_{\textrm{odd}}, \Phi_{2n+1}^{\ab} \rangle_{d\m_{\ab}}&=2\langle f_{\textrm{odd}}^{+}, \Phi_{2n+1}^{\ab} \rangle_{d\m_{\ab}^{+}}.
 \end{align*}
 The above observations, together with the identities 
 \begin{equation*}
 (D^{N}f_{\textrm{even}})^{+}=\de f_{\textrm{even}}^{+},\qquad (D^{N}f_{\textrm{odd}})^{+}=\ko f_{\textrm{odd}}^{+},
 \end{equation*}
 imply for $f\in L^p(wd\mu_{\ab})\cap L^2(d\mu_{\ab})$ the following estimates: 
 \begin{align*}
 \|\mathbb{R}_{N}^{\ab}f\|_{L^{p}(wd\mu_{\ab})}&\le \|R_{N}^{\ab}f_{\textrm{even}}\|_{L^{p}(wd\mu_{\ab})}+ \|\widetilde{R}_{N}^{\ab}f_{\textrm{odd}}\|_{L^{p}(wd\mu_{\ab})}\\
 &\le 2^{1/p}\Big(\|R_{N}^{\ab}f_{\textrm{even}}\|_{L^{p}(w^{+}d\mu_{\ab}^{+})}+ \|\widetilde{R}_{N}^{\ab}f_{\textrm{odd}}\|_{L^{p}(w^{+}d\mu_{\ab}^{+})}\Big)\\
&= 2^{1+1/p}\Big(\|(R_{N}^{\ab})^{+}f_{\textrm{even}}^{+}\|_{L^{p}(w^{+}d\mu_{\ab}^{+})}+\|(\widetilde{R}_{N}^{\ab})^{+}f_{\textrm{odd}}^{+}\|_{L^{p}(w^{+}d\mu_{\ab}^{+})}\Big). 
 \end{align*}
 
 Finally, taking into account the relation 
 $$\|f\|_{L^{p}(wd\mu_{\ab})}\simeq \|f_{\textrm{even}}^{+}\|_{L^{p}(w^{+}d\mu_{\ab}^{+})}+\|f_{\textrm{odd}}^{+}\|_{L^{p}(w^{+}d\mu_{\ab}^{+})},$$
 we conclude that the estimates
 \begin{align*}
 \|(R_{N}^{\ab})^{+}f_{\textrm{even}}^{+}\|_{L^{p}(w^{+}d\mu_{\ab}^{+})}&\lesssim \|f_{\textrm{even}}^{+}\|_{L^{p}(w^{+}d\mu_{\ab}^{+})},\\
 \|(\widetilde{R}_{N}^{\ab})^{+}f_{\textrm{odd}}^{+}\|_{L^{p}(w^{+}d\mu_{\ab}^{+})}&\lesssim \|f_{\textrm{odd}}^{+}\|_{L^{p}(w^{+}d\mu_{\ab}^{+})}
 \end{align*}
 imply the estimate 
 $$\|\mathbb{R}_{N}^{\ab}f\|_{L^{p}(wd\mu_{\ab})}\lesssim \|f\|_{L^{p}(wd\mu_{\ab})}.$$
 As verified in a similar manner, an analogous implication involving weighted weak type $(1,1)$ inequalities is also valid. 
 
 Thus we reduced the proof of Theorem \ref{thm:main} to showing Theorem \ref{thm:main'}. The proof of the latter result is based on the general Calder\'on-Zygmund theory. Obviously, the operators $(H_*^{\ab})^{+}$, $(\widetilde{H}_*^{\ab})^{+}$ and $(G_{M,N}^{\ab})^{+}$, $(\widetilde{G}_{M,N}^{\ab})^{+}$ are not linear.
However, by the well-known trick they can be interpreted as vector-valued linear operators taking values in some Banach spaces
$\mathbb{B}$. Indeed, it is convenient to identify each of them with a linear operator which maps
scalar-valued functions of $\theta \in (0,\pi)$ to $\mathbb{B}$-valued functions of $\theta$.
The corresponding nonlinear operators in question are then obtained by taking the $\mathbb{B}$ norm
at each point $\theta$, or rather at a.a{.}\,$\theta$. Clearly, $\mathbb{B}$ will be $L^2(t^{2M+2N-1}dt)$ in the cases of $(G_{M,N}^{\ab})^{+}$ and $(\widetilde{G}_{M,N}^{\ab})^{+}$.
For $(H_{*}^{\ab})^{+}$ and $(\widetilde{H}_{*}^{\ab})^{+}$ we shall, for technical reasons, choose $\mathbb{B}$ not as $L^{\infty}(dt)$ but
as the closed and separable subspace $\mathbb{X}\subset L^{\infty}(dt)$ consisting of all continuous
functions in $(0,\infty)$ which have finite limits as $t\to 0^+$ and as $t \to \infty$.
In both cases, we shall say that the operators are \emph{associated} with the corresponding
Banach spaces $\mathbb{B}$. Similarly, the linear operators $(R_N^{\ab})^{+}$, $(\widetilde{R}_N^{\ab})^{+}$, $(M^{\ab})^{+}$ and $(\widetilde{M}^{\ab})^{+}$ will
be said to be associated with the Banach space $\mathbb{B}=\mathbb{C}$.

To obtain the boundedness results for the relevant operators, we shall show that they are either scalar- or vector-valued Calder\'on-Zygmund operators, in the sense that we now define. As always, this definition goes via the kernel. So let $\mathbb{B}$ be a Banach space and let $K(\theta,\varphi)$ be a kernel defined on
$(0,\pi)\times (0,\pi) \backslash \{(\theta,\varphi):\theta=\varphi\}$ and taking values in $\mathbb{B}$.
We say that $K(\theta,\varphi)$ is a standard kernel in the sense of the space of homogeneous type
$((0,\pi),d\m_{\ab}^{+},|\cdot|)$ if it satisfies the growth estimate
\begin{equation} \label{gr}
\|K(\theta,\varphi)\|_{\mathbb{B}} \lesssim \frac{1}{\m_{\ab}^{+}(B(\theta,|\theta-\varphi|))}
\end{equation}
and the smoothness estimates
\begin{align}
\| K(\theta,\varphi) - K(\theta',\varphi)\|_{\mathbb{B}} 
	& \lesssim \frac{|\theta-\theta'|}{|\theta-\varphi|}\;
	\frac{1}{\m_{\ab}^{+}(B(\theta,|\theta-\varphi|))}, \qquad |\theta-\varphi| > 2|\theta-\theta'|,
	\label{sm1} \\
\| K(\theta,\varphi) - K(\theta,\varphi')\|_{\mathbb{B}} & 
	\lesssim \frac{|\varphi-\varphi'|}{|\theta-\varphi|}\;
	\frac{1}{\m_{\ab}^{+}(B(\theta,|\theta-\varphi|))}, \qquad |\theta-\varphi| > 2|\varphi-\varphi'|; \label{sm2}
\end{align}
recall that $B(\theta,r)$ denotes the ball (interval) centered at $\theta$ and of radius $r$.
When $K(\theta,\varphi)$ is scalar-valued, i.e. $\mathbb{B}=\mathbb{C}$, the difference conditions
\eqref{sm1} and \eqref{sm2} can be replaced by the more convenient gradient condition
\begin{equation} \label{sm}
|\partial_{\theta} K(\theta,\varphi)| + |\partial_{\varphi} K(\theta,\varphi)| \lesssim
\frac{1}{|\theta-\varphi| \m_{\ab}^{+}(B(\theta,|\theta-\varphi|))}.
\end{equation}

A linear operator $T$ assigning to each $f \in L^2(d\m_{\ab}^{+})$ a measurable $\mathbb{B}$-valued
function $Tf$ on $(0,\pi)$ is said to be a (vector-valued) Calder\'on-Zygmund operator in the sense of 
the space $((0,\pi),d\m_{\ab}^{+},|\cdot|)$ associated with $\mathbb{B}$ if
\begin{itemize}
\item[(a)] $T$ is bounded from $L^2(d\m_{\ab}^{+})$ to $L^2_{\mathbb{B}}(d\m_{\ab}^{+})$, and
\item[(b)] there exists a standard $\mathbb{B}$-valued kernel $K(\theta,\varphi)$ such that
\begin{equation*}
Tf(\theta) = \int_{0}^{\pi} K(\theta,\varphi) f(\varphi) \, d\m_{\ab}^{+}(\varphi), \qquad \textrm{a.e.}
	\;\; \theta \notin \support f,
\end{equation*}
for every $f\in L^2(d\m_{\ab}^{+})$ with compact support in $(0,\pi)$.
\end{itemize}
When $(b)$ holds, we write $T \sim K(\theta,\varphi)$ and say that $T$ is associated with $K$.
Here integration of $\mathbb{B}$-valued functions is understood in Bochner's sense, and 
$L^2_{\mathbb{B}}(d\m_{\ab}^{+})$ is the Bochner-Lebesgue space of all $\mathbb{B}$-valued $d\m_{\ab}^{+}$-square
integrable functions on $(0,\pi)$. It is well known that a large part of the classical theory
of Calder\'on-Zygmund operators remains valid, with appropriate adjustments, when the underlying
space is of homogeneous type and the associated kernels are vector-valued, see for instance the comments
in \cite[p.\,649]{NS} and references given there.

The following result, combined with the general theory of Calder\'on-Zygmund operators and standard arguments (see the proofs of \cite[Theorem 2.1]{NS} and \cite[Proposition 2.5]{szarek}) implies Theorem \ref{thm:main'}, and thus also Theorem \ref{thm:main} by the reduction reasoning described above.
 
\begin{thm} \label{thm:main2}
Assume that $\alpha, \beta \ge -1\slash 2$. The operators $(R_N^{\ab})^{+}$, $(\widetilde{R}_N^{\ab})^{+}$, $N=1,2,\ldots$, $(M^{\ab})^{+}$ and $(\widetilde{M}^{\ab})^{+}$ are Calder\'on-Zygmund operators in the sense of 
the space of homogeneous type $((0,\pi),d\m_{\ab}^{+},|\cdot|)$. 
Further, each of the operators $(H_{*}^{\ab})^{+}$, $(\widetilde{H}_{*}^{\ab})^{+}$, $(G_{M,N}^{\ab})^{+}$ and $(\widetilde{G}_{M,N}^{\ab})^{+}$, $M,N=0,1,2,\ldots$, $M+N>0$, viewed as a vector-valued operator, is a Calder\'on-Zygmund operator 
in the sense of the space $((0,\pi),d\m_{\ab}^{+},|\cdot|)$ associated with $\mathbb{B}$, and here 
$\mathbb{B}$ is $\mathbb{X}$, in case of the maximal operators, and $\mathbb{B}=L^2(t^{2M+2N-1}dt)$ for the square functions.
\end{thm}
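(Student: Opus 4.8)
The plan is to establish Theorem \ref{thm:main2} by verifying, for each of the operators in question, the two defining properties of a (vector-valued) Calder\'on-Zygmund operator: $L^2(d\m_{\ab}^{+})$-boundedness (into the relevant Bochner space), and the existence of an associated standard kernel. The $L^2$ part is comparatively soft: for $(M^{\ab})^{+}$ and $(\widetilde{M}^{\ab})^{+}$ it follows at once from the spectral theorem and the boundedness of the multiplier function $m$ on $\spectrum\sqrt{\mathbb J^{\ab}}$, which is immediate from Definition \ref{def}; for $(R_N^{\ab})^{+}$ and $(\widetilde{R}_N^{\ab})^{+}$ it is inherited from \cite[Proposition 4.3]{Symmetrized} via the decomposition $\mathbb R_N^{\ab}=R_N^{\ab}+\widetilde R_N^{\ab}$ and the splitting into even/odd parts already carried out in the reduction; for the maximal operators and square functions, $L^2$-boundedness (with values in $\mathbb X$, resp.\ $L^2(t^{2M+2N-1}dt)$) follows from standard Littlewood--Paley--Stein theory for the symmetric contraction semigroup $\{\mathbb H_t^{\ab}\}$ together with \eqref{deco}, which expresses the relevant kernels in terms of the classical Jacobi--Poisson kernel $H_t^{\ab}$ and its $(\alpha+1,\beta+1)$-analogue through \eqref{jadro}. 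I would carry out these $L^2$ verifications first, in a preliminary section (this is what Section \ref{sec:L^2} is announced to do).

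The substance of the proof is the kernel analysis. For each operator I would first write down the associated kernel explicitly as a series: the Riesz kernels are $\sum_n (\lambda_n^{\ab})^{-N/2}\de\Phi_{2n}^{\ab}(\theta)\Phi_{2n}^{\ab}(\varphi)$ and the tilde analogue with $\ko$; the multiplier kernels replace $(\lambda_n^{\ab})^{-N/2}$ by $m(\sqrt{\lambda_n^{\ab}})$; and the maximal/square-function kernels are the $\mathbb B$-valued objects $\partial_t^M\de (H_t^{\ab})^{+}(\theta,\varphi)$ etc. The key step is then to convert each of these into an integral over $t$ of (derivatives of) the Jacobi--Poisson kernel: for Laplace-type multipliers one uses $m_\phi(\sqrt{\lambda})=\int_0^\infty \sqrt{\lambda}\,e^{-t\sqrt\lambda}\phi(t)\,dt$ to write the kernel as $-\int_0^\infty \phi(t)\,\partial_t H_t^{\ab}(\theta,\varphi)\,dt$, for Laplace--Stieltjes-type as $\int_{(0,\infty)} H_t^{\ab}(\theta,\varphi)\,d\nu(t)$, and for the Riesz transforms one uses the subordination/negative-power formula $(\lambda)^{-N/2}=c\int_0^\infty t^{N-1}e^{-t\sqrt\lambda}\,dt$ to express the kernel as a multiple of $\int_0^\infty t^{N-1}\de H_t^{\ab}(\theta,\varphi)\,dt$ (and $\ko$ for the tilde version), where by \eqref{jadro} the $\ko$-derivative acting on $\widetilde H_t^{\ab}$ reduces, after the elementary identities relating $\delta,\delta^*$ on $(0,\pi)$ to ordinary differentiation with the double-power weight, to derivatives of $H_t^{\alpha+1,\beta+1}$ against the factor $\sin\theta\sin\varphi$. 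In this way all kernel estimates are reduced to uniform-in-$t$ bounds for the quantities $t^{k}\partial_t^{M}\partial_\theta^{a}\partial_\varphi^{b} H_t^{\ab}(\theta,\varphi)$ and their $(\alpha+1,\beta+1)$-counterparts.

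At this point the main tool is the convenient representation of the Jacobi--Poisson kernel from \cite[Proposition 4.1]{NoSjogren}, which rests on the Dijksma--Koornwinder product formula \cite{DK} and is valid precisely for $\alpha,\beta\ge-1/2$ (this is where the hypothesis enters, and why it is inherited). Using that representation and the kernel-estimation technique of \cite[Section 4]{NoSjogren}, I would prove: (1) the growth bound $\|K(\theta,\varphi)\|_{\mathbb B}\lesssim \m_{\ab}^{+}(B(\theta,|\theta-\varphi|))^{-1}$, by integrating in $t$ the pointwise bounds for the $t$-derivatives of $H_t$ after pulling out the $\mathbb B$-norm (for the vector-valued cases, applying Minkowski's integral inequality to move $\|\cdot\|_{L^2(t^{2M+2N-1}dt)}$ inside); and (2) the smoothness bounds \eqref{sm1}--\eqref{sm2} (or the gradient bound \eqref{sm} in the scalar cases), by differentiating the integral representation once more in $\theta$ or $\varphi$, applying the mean value theorem on the region $|\theta-\varphi|>2|\theta-\theta'|$, and again integrating in $t$. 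The extra $\sin\theta$, $\cos(\theta/2)$ type factors produced by the $\delta,\delta^*$ operators and by \eqref{jadro} are harmless and in fact help near the endpoints; one checks, as in \cite{NoSjogren}, that all resulting $t$-integrals converge and produce the claimed bounds with constants uniform in the remaining variables.

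The main obstacle, as the authors themselves flag, is step (1)--(2) of the last paragraph: obtaining the standard estimates for the vector-valued kernels associated with $(G_{M,N}^{\ab})^{+}$, $(\widetilde G_{M,N}^{\ab})^{+}$ (and the maximal operators). Here one must control, simultaneously for all $t>0$ and with the correct $L^2(t^{2M+2N-1}dt)$-weight, the full family of mixed derivatives $\partial_t^M\de H_t^{\ab}(\theta,\varphi)$ near the diagonal, near the two endpoints $0$ and $\pi$, and in the intermediate range, and then integrate the $\theta$- or $\varphi$-difference of these quantities against $t^{2M+2N-1}$. This requires a careful case analysis based on the regions dictated by the Dijksma--Koornwinder representation and a bookkeeping of how many $t$- and space-derivatives land where; the arithmetic of exponents (ensuring the $t$-integrals converge at both $0$ and $\infty$ and assemble to the measure-of-ball factor) is delicate but follows the template of \cite[Section 4]{NoSjogren}. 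I would isolate a single technical lemma giving the pointwise bounds for $t^{c}\partial_t^{a}\partial_\theta^{b}\partial_\varphi^{d} H_t^{\ab}(\theta,\varphi)$ in the homogeneous-type language, prove it once by the \cite{NoSjogren} method, and then deduce every required standard estimate — scalar and vector-valued — as a corollary by elementary integration in $t$. This is the content of the announced Section \ref{sec:ker}.
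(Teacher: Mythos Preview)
Your proposal is correct and follows essentially the same strategy as the paper: split the proof into $L^2$-boundedness (Proposition \ref{prop:L2}), kernel association (Proposition \ref{prop:assoc}), and standard estimates (Theorem \ref{thm:stand}), with the latter carried out via the Dijksma--Koornwinder representation and the kernel technique of \cite[Section 4]{NoSjogren}. The only minor deviations are that the paper handles $L^2$-boundedness of the maximal and square function operators by direct Parseval-type computations (and reduction to \cite[Proposition 2.2]{NoSjogren}) rather than invoking abstract Littlewood--Paley--Stein theory, and it converts $\de$, $\ko$ into $\partial_t$-derivatives via the Laplace-type equations $\delta_2^{\mathrm{even}}H_t^{\ab}=\partial_t^2 H_t^{\ab}-\lambda_0^{\ab}H_t^{\ab}$ and $\delta_2^{\mathrm{odd}}\widetilde H_t^{\ab}=\partial_t^2\widetilde H_t^{\ab}-\lambda_0^{\ab}\widetilde H_t^{\ab}$ rather than working with $\delta,\delta^*$ directly.
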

 
The proof of Theorem \ref{thm:main2} splits naturally into showing the following three results.

\begin{prop} \label{prop:L2}
Let $\alpha,\beta \ge -1\slash 2$. The operators $(R_N^{\ab})^{+}$, $(\widetilde{R}_N^{\ab})^{+}$, $N=1,2,\ldots$, $(M^{\ab})^{+}$, $(\widetilde{M}^{\ab})^{+}$, $(H_{*}^{\ab})^{+}$,$(\widetilde{H}_{*}^{\ab})^{+}$, $(G_{M,N}^{\ab})^{+}$ and $(\widetilde{G}_{M,N}^{\ab})^{+}$,
$M,N=0,1,2,\ldots$, $M+N>0$, are bounded on $L^2(d\m_{\ab}^{+})$.
In particular, each of the operators $(H_{*}^{\ab})^{+}$,$(\widetilde{H}_{*}^{\ab})^{+}$, $(G_{M,N}^{\ab})^{+}$ and $(\widetilde{G}_{M,N}^{\ab})^{+}$,
$M,N=0,1,2,\ldots$, $M+N>0$, viewed as a vector-valued operator,
is bounded from $L^2(d\m_{\ab}^{+})$ to $L^2_{\mathbb{B}}(d\m_{\ab}^{+})$, where $\mathbb{B}$ 
is as in Theorem~\ref{thm:main2}.
\end{prop}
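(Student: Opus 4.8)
The plan is to isolate two structural facts that make everything reduce either to the spectral theorem or to the classical Jacobi results of \cite{NoSjogren}. First I would record, from \eqref{Phi} and \eqref{jacdiff} (this is exactly the computation behind \eqref{deco}), that $\Phi_{2n}^{\ab}=\frac1{\sqrt2}\P_n^{\ab}$ and, since $\lambda_{n+1}^{\ab}-\lambda_0^{\ab}=(n+1)(n+\alpha+\beta+2)$, also $\Phi_{2n+1}^{\ab}=\frac1{2\sqrt2}\sin\theta\,\P_n^{\alpha+1,\beta+1}$. Hence, viewed on $(0,\pi)$, each of $\{\Phi_{2n}^{\ab}\}_{n\ge0}$ and $\{\Phi_{2n+1}^{\ab}\}_{n\ge0}$ is an orthogonal basis of $L^2(d\m_{\ab}^{+})$ with every element of norm $1/\sqrt2$, so that $\sum_n|\langle f,\Phi_{2n}^{\ab}\rangle_{d\m_{\ab}^{+}}|^2=\sum_n|\langle f,\Phi_{2n+1}^{\ab}\rangle_{d\m_{\ab}^{+}}|^2=\frac12\|f\|^2_{L^2(d\m_{\ab}^{+})}$; moreover, because $\sin^2\theta\,d\m_{\ab}^{+}=4\,d\m_{\alpha+1,\beta+1}^{+}$, multiplication by $\sin\theta$ is an isomorphism of $L^2(d\m_{\alpha+1,\beta+1}^{+})$ onto $L^2(d\m_{\ab}^{+})$, isometric up to the factor $2$, which intertwines the tilde-operators with the classical Jacobi objects of type $(\alpha+1,\beta+1)$. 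Second, since $D$ is formally skew-adjoint in $L^2(d\m_{\ab})$ with $-D^2=\mathbb{J}^{\ab}-\lambda_0^{\ab}$ and $D^N$ restricts on $(0,\pi)$ to $\de$ on even and to $\ko$ on odd functions (equivalently, by iterating the ladder relation $\delta^*\delta\P_n^{\ab}=(\lambda_n^{\ab}-\lambda_0^{\ab})\P_n^{\ab}$ together with \eqref{jacdiff}), I would obtain the orthogonality and norm identities $\langle\de\Phi_{2n}^{\ab},\de\Phi_{2m}^{\ab}\rangle_{d\m_{\ab}^{+}}=\frac12(\lambda_n^{\ab}-\lambda_0^{\ab})^N\delta_{nm}$ and $\langle\ko\Phi_{2n+1}^{\ab},\ko\Phi_{2m+1}^{\ab}\rangle_{d\m_{\ab}^{+}}=\frac12(\lambda_{n+1}^{\ab}-\lambda_0^{\ab})^N\delta_{nm}$.

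Granting these, the multiplier case is immediate: by Definition \ref{def}, $|m_{\phi}(z)|\le\|\phi\|_{L^{\infty}(0,\infty)}$ and $|m_{\nu}(z)|\le\int_{(0,\infty)}e^{-t|\alpha+\beta+1|/2}\,d|\nu|(t)$ for $z\ge|\alpha+\beta+1|/2$, so $m$ is bounded on $\spectrum\sqrt{\mathbb{J}^{\ab}}$; expanding $(M^{\ab})^{+}f$ in $\{\Phi_{2n}^{\ab}\}$ (respectively $(\widetilde M^{\ab})^{+}f$ in $\{\Phi_{2n+1}^{\ab}\}$) and using the first structural fact yields $\|(M^{\ab})^{+}f\|_{L^2(d\m_{\ab}^{+})}\le\frac12\|m\|_{\infty}\|f\|_{L^2(d\m_{\ab}^{+})}$ and likewise for $(\widetilde M^{\ab})^{+}$. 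For the Riesz transforms one may simply combine the $L^2(d\m_{\ab})$-boundedness of $\mathbb{R}_N^{\ab}$ from \cite[Proposition 4.3]{Symmetrized} with the reduction identities displayed before Theorem \ref{thm:main'}; alternatively, expanding $(R_N^{\ab})^{+}f$ in $\{\de\Phi_{2n}^{\ab}\}$ and invoking the second structural fact gives directly $\|(R_N^{\ab})^{+}f\|^2_{L^2(d\m_{\ab}^{+})}=\frac12\sum_n(1-\lambda_0^{\ab}/\lambda_n^{\ab})^N|\langle f,\Phi_{2n}^{\ab}\rangle_{d\m_{\ab}^{+}}|^2\le\frac12\|f\|^2_{L^2(d\m_{\ab}^{+})}$, since $0\le\lambda_0^{\ab}\le\lambda_n^{\ab}$, the $n=0$ summand being harmless when $\alpha+\beta=-1$ because then $\de\Phi_0^{\ab}=0$; $(\widetilde R_N^{\ab})^{+}$ is treated identically with $\lambda_{n+1}^{\ab}$ in place of $\lambda_n^{\ab}$.

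For the maximal operators I would use the first structural fact to identify $(H_*^{\ab})^{+}$ with one half of the classical Jacobi--Poisson semigroup maximal operator of type $\ab$, and $(\widetilde H_*^{\ab})^{+}f$ with $\frac12\sin\theta\cdot\big(H_*^{\alpha+1,\beta+1}(f/\sin)\big)$, where $H_*^{\alpha+1,\beta+1}$ is the classical Jacobi--Poisson maximal operator of type $(\alpha+1,\beta+1)$; since both classical operators are bounded on the respective $L^2$ spaces by \cite{NoSjogren} (note $\alpha+1,\beta+1\ge1/2$) and the conjugating maps are $L^2$-isomorphisms, the boundedness of $(H_*^{\ab})^{+}$ and $(\widetilde H_*^{\ab})^{+}$ on $L^2(d\m_{\ab}^{+})$ follows.

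For the square functions the plan is Fubini plus Parseval. Differentiating the series for $(H_t^{\ab})^{+}f$ term by term (legitimate by the uniform convergence on compacts noted in Section~\ref{sec:prel}) and applying the identities of the second structural fact,
\[
\big\|\partial_t^M\de(H_t^{\ab})^{+}f\big\|^2_{L^2(d\m_{\ab}^{+})}
=\tfrac12\sum_n(\lambda_n^{\ab})^M e^{-2t\sqrt{\lambda_n^{\ab}}}(\lambda_n^{\ab}-\lambda_0^{\ab})^N\big|\langle f,\Phi_{2n}^{\ab}\rangle_{d\m_{\ab}^{+}}\big|^2,
\]
and integrating $t^{2M+2N-1}$ against this, with $\int_0^{\infty}\lambda^M e^{-2t\sqrt\lambda}t^{2M+2N-1}\,dt=\Gamma(2M+2N)4^{-(M+N)}\lambda^{-N}$ for $\lambda>0$, telescopes to
\[
\big\|(G_{M,N}^{\ab})^{+}f\big\|^2_{L^2(d\m_{\ab}^{+})}
=\frac{\Gamma(2M+2N)}{2\cdot4^{M+N}}\sum_n\Big(1-\frac{\lambda_0^{\ab}}{\lambda_n^{\ab}}\Big)^N\big|\langle f,\Phi_{2n}^{\ab}\rangle_{d\m_{\ab}^{+}}\big|^2\lesssim\|f\|^2_{L^2(d\m_{\ab}^{+})},
\]
the $n=0$ term again vanishing when $\lambda_0^{\ab}=0$ thanks to $M+N>0$; $(\widetilde G_{M,N}^{\ab})^{+}$ is handled verbatim with $\Phi_{2n+1}^{\ab}$, $\lambda_{n+1}^{\ab}$ and $\ko$ replacing $\Phi_{2n}^{\ab}$, $\lambda_n^{\ab}$ and $\de$. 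The asserted boundedness of the vector-valued lifts from $L^2(d\m_{\ab}^{+})$ to $L^2_{\mathbb{B}}(d\m_{\ab}^{+})$ is just a restatement of the scalar estimates above via Fubini. I expect the only genuinely delicate point to be the derivation of the orthogonality/norm identities for $\de\Phi_{2n}^{\ab}$ and $\ko\Phi_{2n+1}^{\ab}$, with the attendant bookkeeping of $\delta$, $\delta^*$ and \eqref{jacdiff} and the treatment of the degenerate eigenvalue $\lambda_0^{\ab}=0$; everything else is routine.
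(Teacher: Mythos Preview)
Your proposal is correct and follows essentially the same route as the paper: multipliers by boundedness of $m$ on the spectrum, maximal operators by identifying $(H_*^{\ab})^{+}$ with the classical Jacobi--Poisson maximal operator and reducing $(\widetilde H_*^{\ab})^{+}$ via the $\sin\theta$ conjugation to the $(\alpha+1,\beta+1)$ case, and Riesz transforms and $g$-functions by Parseval and the Gamma integral. The only cosmetic difference is that the paper packages your ``second structural fact'' as an explicit formula (their Corollary~\ref{cor:delta}, imported from \cite[Corollary~4.3]{Symmetrized}): $\de\Phi_{2n}^{\ab}$ and $\ko\Phi_{2n+1}^{\ab}$ are, up to a sign, $(\lambda_n^{\ab}-\lambda_0^{\ab})^{N/2}\Phi_{2n-\widetilde N}^{\ab}$ and $(\lambda_{n+1}^{\ab}-\lambda_0^{\ab})^{N/2}\Phi_{2n+1+\widetilde N}^{\ab}$, which immediately yields the orthogonality/norm identities you derive and removes the only point you flagged as delicate.
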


For $\alpha,\beta \ge -1\slash 2$ and $\theta, \varphi\in(0,\pi)$ define the kernels

\begin{align*}
M^{\ab}_{\phi}(\theta,\varphi) & = -\int_0^{\infty}
	\partial_{t}H_t^{\ab}(\theta,\varphi) \phi(t)\,dt,  \\
\widetilde{M}^{\ab}_{\phi}(\theta,\varphi) & = -\int_0^{\infty}
	\partial_{t}\widetilde{H}_t^{\ab}(\theta,\varphi) \phi(t)\,dt,  \\
M^{\ab}_{\nu}(\theta,\varphi) & = \int_{(0,\infty)} H_t^{\ab}(\theta,\varphi)\,d\nu(t), \\
\widetilde{M}^{\ab}_{\nu}(\theta,\varphi) & = \int_{(0,\infty)}
	\widetilde{H}_t^{\ab}(\theta,\varphi)\,d\nu(t),  \\	
R_N^{\ab}(\theta,\varphi) & = \frac{1}{\Gamma(N)}\int_0^{\infty} 
	\de H_t^{\ab}(\theta,\varphi) t^{N-1}\,dt, \qquad N\ge 1,\\ 
\widetilde{R}_N^{\ab}(\theta,\varphi) & = \frac{1}{\Gamma(N)}\int_0^{\infty} 
	\ko \widetilde{H}_t^{\ab}(\theta,\varphi) t^{N-1}\,dt, \qquad N\ge 1. 
\end{align*}
Here and elsewhere the derivatives $\de$ and $\ko$ act always on $\theta$ variable.

\begin{prop} \label{prop:assoc}
Let $\alpha,\beta \ge -1\slash 2$. The operators $(R_N^{\ab})^{+}$, $(\widetilde{R}_N^{\ab})^{+}$, $N=1,2,\ldots$, $(M^{\ab})^{+}$ and $(\widetilde{M}^{\ab})^{+}$ are associated with the following scalar-valued kernels:
$$
\begin{array}{lll}
&(R_N^{\ab})^{+} \sim R_N^{\ab}(\theta,\varphi), \qquad 
&(\widetilde{R}_N^{\ab})^{+} \sim \widetilde{R}_N^{\ab}(\theta,\varphi),\\
&({M}^{\ab})^{+} \sim {M}^{\ab}(\theta,\varphi), \qquad 
&(\widetilde{M}^{\ab})^{+} \sim \widetilde{M}^{\ab}(\theta,\varphi), 
\end{array}
$$
where ${M}^{\ab}$ is equal either to $M^{\ab}_{\phi}$ or $M^{\ab}_{\nu}$, depending on whether $m=m_{\phi}$ or $m=m_{\nu}$, respectively, and similarly for the corresponding objects with tildes.

Further, the operators $(H_{*}^{\ab})^{+}$,$(\widetilde{H}_{*}^{\ab})^{+}$, $(G_{M,N}^{\ab})^{+}$, $(\widetilde{G}_{M,N}^{\ab})^{+},$
$M,N=0,1,2,\ldots$, $M+N>0$, viewed as vector-valued operators,
are associated with the following $\mathbb{B}$-valued kernels:
$$
\begin{array}{lll}
& (H_{*}^{\ab})^{+}   \sim \{H_t^{\ab}(\theta,\varphi)\}_{t>0}, \qquad 
& (\widetilde{H}_{*}^{\ab})^{+}   \sim \{\widetilde{H}_t^{\ab}(\theta,\varphi)\}_{t>0},\\
& (G_{M,N}^{\ab})^{+}  \sim \{\partial_t^{M}\delta_N^{\emph{\textrm{even}}} H_t^{\ab}(\theta,\varphi)\}_{t>0}, \qquad
& (\widetilde{G}_{M,N}^{\ab})^{+}  \sim \{\partial_t^{M}\delta_N^{\emph{\textrm{odd}}} \widetilde{H}_t^{\ab}(\theta,\varphi)\}_{t>0}.
\end{array}
$$
Here $\mathbb{B}$ is as in Theorem \ref{thm:main2}.
\end{prop}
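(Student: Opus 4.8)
For each operator the plan is to render its multiplier symbol as a superposition of the Poisson semigroup symbols $e^{-t\sqrt{\lambda_n^{\ab}}}$ through a subordination formula, feed this into the integral representation of $(H_t^{\ab})^{+}$ (resp.\ $(\widetilde H_t^{\ab})^{+}$) recorded before the proposition, and then interchange the order of integration so as to read off the asserted kernel. The relevant identities are
\begin{equation*}
(\lambda_n^{\ab})^{-N/2}=\frac{1}{\Gamma(N)}\int_0^{\infty}e^{-t\sqrt{\lambda_n^{\ab}}}\,t^{N-1}\,dt,\qquad
m_{\phi}\big(\sqrt{\lambda_n^{\ab}}\big)=-\int_0^{\infty}\partial_t\big(e^{-t\sqrt{\lambda_n^{\ab}}}\big)\,\phi(t)\,dt,
\end{equation*}
together with $m_{\nu}(\sqrt{\lambda_n^{\ab}})=\int_{(0,\infty)}e^{-t\sqrt{\lambda_n^{\ab}}}\,d\nu(t)$. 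For the maximal operators and the square functions no subordination is needed: the associated $\mathbb{B}$-valued kernel is by construction the family $\{\partial_t^{M}\de H_t^{\ab}(\theta,\varphi)\}_{t>0}$ (and similarly with $\ko$ and with tildes), so the whole content of the association is, on the one hand, the integral representation of $(H_t^{\ab})^{+}$, $(\widetilde H_t^{\ab})^{+}$ used pointwise in $t$, and, on the other hand, the fact that these families genuinely belong to $\mathbb{B}$ whenever $\theta\neq\varphi$.

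I would spell out the details for $(R_N^{\ab})^{+}$; the remaining linear operators are treated identically. Fix $f\in L^2(d\m_{\ab}^{+})$ with compact support and $g\in C_c^{\infty}\big((0,\pi)\setminus\support f\big)$, and expand $(R_N^{\ab})^{+}f$ in its $L^2$-convergent defining series. Writing $\langle\de\Phi_{2n}^{\ab},g\rangle_{d\m_{\ab}^{+}}=\langle\Phi_{2n}^{\ab},(\de)^{*}g\rangle_{d\m_{\ab}^{+}}$ with $(\de)^{*}$ the formal adjoint of $\de$ (a differential operator with coefficients smooth on $(0,\pi)$, hence $(\de)^{*}g\in L^2(d\m_{\ab}^{+})$), one sees by Cauchy--Schwarz, Bessel's inequality for the orthonormal system $\{\sqrt{2}\,\Phi_{2n}^{\ab}\}$, and the positive lower bound for $\lambda_n^{\ab}$ (the $n=0$ term being absent when $\alpha+\beta=-1$) that the series defining $\langle(R_N^{\ab})^{+}f,g\rangle_{d\m_{\ab}^{+}}$ converges absolutely. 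Inserting the subordination identity, Tonelli then permits the interchange of this sum with $\int_0^{\infty}(\cdot)\,t^{N-1}\,dt$, and for each fixed $t>0$ the resulting inner sum equals
\begin{equation*}
\int_0^{\pi}\!\!\int_0^{\pi}\de H_t^{\ab}(\theta,\varphi)\,f(\varphi)\,\overline{g(\theta)}\,d\m_{\ab}^{+}(\varphi)\,d\m_{\ab}^{+}(\theta),
\end{equation*}
by termwise integration of the expansion $\de H_t^{\ab}(\theta,\varphi)=\sum_n e^{-t\sqrt{\lambda_n^{\ab}}}(\de\Phi_{2n}^{\ab})(\theta)\,\Phi_{2n}^{\ab}(\varphi)$, which for fixed $t$ converges uniformly on $\support g\times\support f$ by the polynomial bounds \eqref{estjac}, \eqref{jacdiff} and the exponential factor. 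One final Fubini---legitimate because $|\de H_t^{\ab}(\theta,\varphi)|\,t^{N-1}$ is integrable over $(0,\infty)\times\support g\times\support f$---moves $\int_0^{\infty}(\cdot)\,t^{N-1}\,dt$ inside and identifies the integrand against $f(\varphi)\overline{g(\theta)}$ as $R_N^{\ab}(\theta,\varphi)$. Since $g$ is arbitrary in the above class, we obtain $(R_N^{\ab})^{+}f(\theta)=\int_0^{\pi}R_N^{\ab}(\theta,\varphi)f(\varphi)\,d\m_{\ab}^{+}(\varphi)$ for a.e.\ $\theta\notin\support f$. The multiplier cases proceed along the same lines, now driving the Tonelli and Fubini steps by the boundedness of $\phi$ on $(0,\infty)$, resp.\ the condition $\int_{(0,\infty)}e^{-t|\alpha+\beta+1|/2}\,d|\nu|(t)<\infty$; the bottom-eigenvalue term causes no trouble when $\alpha+\beta=-1$, since then $m_{\phi}(0)=0$ while $m_{\nu}(0)=\nu((0,\infty))$ is finite.

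For $(H_*^{\ab})^{+}$, $(\widetilde H_*^{\ab})^{+}$ and the square functions the reasoning is softer, the kernels being prescribed outright. For $\theta\neq\varphi$ one verifies that $t\mapsto H_t^{\ab}(\theta,\varphi)$ is continuous on $(0,\infty)$ and has finite limits as $t\to0^{+}$ (namely $0$) and as $t\to\infty$ (namely $0$ if $\alpha+\beta>-1$, and $\Phi_0^{\ab}(\theta)\Phi_0^{\ab}(\varphi)$ if $\alpha+\beta=-1$), hence lies in $\mathbb{X}$; and that $t\mapsto\partial_t^{M}\de H_t^{\ab}(\theta,\varphi)$ lies in $L^2(t^{2M+2N-1}dt)$, square-integrability at $t=0$ following from the crude bound discussed below together with $M+N>0$, and at $t=\infty$ from exponential decay---the possibly constant $n=0$ term, present only for $\alpha+\beta=-1$, being annihilated by $\de$ when $N\ge1$ and by $\partial_t^{M}$ when $M\ge1$, one of which always holds. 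Granting this, the scalar $\varphi$-integration may be pulled inside the $\mathbb{B}$-norm by the same uniform bounds (a Bochner integral interchange), giving e.g.\ $(H_*^{\ab})^{+}f(\theta)=\big\|\int_0^{\pi}\{H_t^{\ab}(\theta,\varphi)\}_{t>0}f(\varphi)\,d\m_{\ab}^{+}(\varphi)\big\|_{\mathbb{X}}$ for $\theta\notin\support f$, and likewise for the square functions; the tilded objects are handled identically, or, for the maximal operator, reduced through \eqref{jadro} to the $(\alpha+1,\beta+1)$-case up to the bounded factor $\frac{1}{4}\sin\theta\,\sin\varphi$.

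The main obstacle is the crude kernel bound invoked throughout: on every compact subset of $(0,\pi)^2$ away from the diagonal one needs $|\partial_t^{M}\de H_t^{\ab}(\theta,\varphi)|$ to stay bounded as $t\to0^{+}$ (indeed $\lesssim t$ when $M=0$) and to decay exponentially as $t\to\infty$ once the constant term has been removed. This single estimate underpins every Tonelli/Fubini interchange and every membership-in-$\mathbb{B}$ assertion above. To obtain it I would invoke the representation of the Jacobi--Poisson kernel from \cite[Proposition 4.1]{NoSjogren} (equivalently, subordinate to the Jacobi heat semigroup and use Gaussian heat-kernel bounds)---a crude consequence of what will be established in a sharper form for the standard estimates---and the only genuinely new bookkeeping is to keep careful track of the degenerate parameter case $\alpha+\beta=-1$ at each step.
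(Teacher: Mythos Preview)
Your approach is essentially the same as the paper's: both reduce the association to a pairing identity with test functions having disjoint supports, insert the subordination formula for $(\lambda_n^{\ab})^{-N/2}$, and justify the successive interchanges of summation and integration via Fubini, invoking the growth bound for the kernel (proved independently in Section~\ref{sec:ker}) for the crucial step where $\int_0^{\infty}|\de H_t^{\ab}(\theta,\varphi)|\,t^{N-1}\,dt$ must be finite on $\support g\times\support f$. The paper likewise treats in detail only two representative cases, $(R_N^{\ab})^{+}$ and $(\widetilde{G}_{M,N}^{\ab})^{+}$, and forward-references Theorem~\ref{thm:stand} for the needed kernel estimates, exactly as you do.

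There is one small methodological difference worth noting. To establish absolute summability of $\sum_n(\lambda_n^{\ab})^{-N/2}\langle f,\Phi_{2n}^{\ab}\rangle\,\overline{\langle g,\de\Phi_{2n}^{\ab}\rangle}$, you transfer $\de$ to $g$ via its formal adjoint and then apply Bessel's inequality in the system $\{\sqrt{2}\,\Phi_{2n}^{\ab}\}$; the paper instead uses Corollary~\ref{cor:delta} to recognize $\de\Phi_{2n}^{\ab}$ as a constant multiple of $\Phi_{2n-\widetilde{N}}^{\ab}$ and then appeals to orthogonality of that shifted system. Both arguments work; yours avoids invoking Corollary~\ref{cor:delta} at this particular step, at the cost of having to observe that $(\de)^{*}$ maps $C_c^{\infty}(0,\pi)$ into $L^2(d\mu_{\ab}^{+})$. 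For the vector-valued operators the paper carries out the full duality computation against tensor test functions $h_1(\theta)h_2(t)$, which is the standard rigorous form of the Bochner-integral interchange you sketch.
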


\begin{thm} \label{thm:stand}
Assume that $\alpha,\beta \ge -1\slash 2$. The scalar-valued kernels from Proposition \ref{prop:assoc}
satisfy the standard estimates \eqref{gr}, with $\mathbb{B}=\mathbb{C}$, and \eqref{sm}.
The vector-valued kernels appearing in Proposition \ref{prop:assoc} satisfy
the standard estimates \eqref{gr}, \eqref{sm1} and \eqref{sm2}, with $\mathbb{B}$ as in Theorem \ref{thm:main2}.
\end{thm}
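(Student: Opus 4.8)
The plan is to derive all the required standard estimates from a single analytic input — the representation of the Jacobi--Poisson kernel established in \cite[Proposition 4.1]{NoSjogren}, available precisely because $\alpha,\beta\ge-1/2$ — together with the kernel-estimation calculus developed in \cite[Section 4]{NoSjogren}. Schematically, that representation writes $H_t^{\ab}(\theta,\varphi)$ as $e^{-t(\alpha+\beta+1)/2}$ times a constant times an integral over $[-1,1]^{2}$, against the product of the Jacobi-type probability measures $\Pi_\alpha,\Pi_\beta$ (with the usual modification when $\alpha$ or $\beta$ equals $-1/2$), of a quotient of the form $\frac{1-e^{-2t}}{\bigl((1-e^{-t})^{2}+2e^{-t}(1-\Upsilon(\theta,\varphi,s,u))\bigr)^{\alpha+\beta+2}}$, where $\Upsilon\in[-1,1]$ is built from $\cos\frac{\theta}{2}\cos\frac{\varphi}{2}$, $\sin\frac{\theta}{2}\sin\frac{\varphi}{2}$ and $s,u$ so that the denominator degenerates only on the diagonal $\theta=\varphi$; all $t$- and $\theta$-derivatives may be taken under the integral sign.

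First I would reduce the tilde kernels to the un-tilde ones. By \eqref{jadro} one has $\widetilde H_t^{\ab}(\theta,\varphi)=\frac14\sin\theta\,\sin\varphi\,H_t^{\alpha+1,\beta+1}(\theta,\varphi)$, so each tilde kernel equals $\frac14\sin\varphi$ times $\ko$ (acting in $\theta$) applied to $\sin\theta\cdot H_t^{\alpha+1,\beta+1}(\cdot,\varphi)$ (or the corresponding derivative of it). Writing $\ko=\sum_k b_k^{(N)}(\theta)\partial_\theta^k$ with coefficients that are polynomials in $\cot\frac{\theta}{2}$ and $\tan\frac{\theta}{2}$ of degree $\le N-k$, and using $\cot\frac{\theta}{2}\cdot\sin\theta=1+\cos\theta$, $\tan\frac{\theta}{2}\cdot\sin\theta=1-\cos\theta$ together with the fact that $H_t^{\alpha+1,\beta+1}(\cdot,\varphi)$ is smooth and even near the endpoints $\theta=0,\pi$, one checks that $\ko\bigl(\sin\theta\cdot H_t^{\alpha+1,\beta+1}(\cdot,\varphi)\bigr)$ is, up to coefficients no more singular at $\theta=0,\pi$ than those occurring in the un-tilde analysis, a combination of $\theta$-derivatives of $H_t^{\alpha+1,\beta+1}(\cdot,\varphi)$. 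The estimates so obtained live in the space $((0,\pi),d\m_{\alpha+1,\beta+1}^{+},|\cdot|)$ and are transferred to $((0,\pi),d\m_{\ab}^{+},|\cdot|)$ using $d\m_{\alpha+1,\beta+1}^{+}=\frac14\sin^2\theta\,d\m_{\ab}^{+}$, the comparison $\m_{\ab}^{+}(B(\theta,r))\simeq r(\theta+r)^{2\alpha+1}(\pi-\theta+r)^{2\beta+1}$ (and its analogue for $(\alpha+1,\beta+1)$), and the elementary bounds $\sin\theta,\sin\varphi,|\theta-\varphi|\lesssim(\theta+|\theta-\varphi|)(\pi-\theta+|\theta-\varphi|)$; these combine to give $\sin\theta\,\sin\varphi\cdot\m_{\ab}^{+}(B(\theta,|\theta-\varphi|))\lesssim\m_{\alpha+1,\beta+1}^{+}(B(\theta,|\theta-\varphi|))$ and its variants with one factor $\sin$ replaced by $|\theta-\varphi|$, which is exactly what is needed to pass \eqref{gr}, \eqref{sm}, \eqref{sm1}, \eqref{sm2} across. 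For the smoothness estimates one first writes $\widetilde H_t^{\ab}(\theta,\varphi)-\widetilde H_t^{\ab}(\theta',\varphi)$ as a telescoping sum separating the Lipschitz factor $\sin\theta-\sin\theta'$ from the difference of the shifted kernels.

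For the un-tilde kernels the plan is to differentiate the \cite[Proposition 4.1]{NoSjogren} representation under the integral sign. With $\de=\sum_k a_k^{(N)}(\theta)\partial_\theta^k$ as above, one has $\partial_t^M\de H_t^{\ab}(\theta,\varphi)=\sum_k a_k^{(N)}(\theta)\,\partial_t^M\partial_\theta^k H_t^{\ab}(\theta,\varphi)$, while the Riesz and multiplier kernels arise by integrating $\de H_t^{\ab}$, $\partial_t H_t^{\ab}$, or $H_t^{\ab}$ against $t^{N-1}\,dt$, $\phi(t)\,dt$, or $d\nu(t)$ respectively; in the Laplace--Stieltjes case the finiteness of $\int_{(0,\infty)}e^{-t|\alpha+\beta+1|/2}\,d|\nu|(t)$, together with the decay $|H_t^{\ab}(\theta,\varphi)|\lesssim e^{-t|\alpha+\beta+1|/2}$ for $t\ge1$ (which follows from \eqref{estjac} and $\sqrt{\lambda_n^{\ab}}=n+|\alpha+\beta+1|/2$), makes the $t$-integration harmless, while $|\nu|((0,1])<\infty$. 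After carrying out the differentiations and combining them with the coefficients $a_k^{(N)}$, one reduces to estimating, against $\m_{\ab}^{+}(B(\theta,|\theta-\varphi|))^{-1}$ — and, for \eqref{sm}--\eqref{sm2}, against its product with $|\theta-\theta'|/|\theta-\varphi|$ — integrals of precisely the shape treated in \cite[Section 4]{NoSjogren}. The degenerate case $\alpha+\beta=-1$, where $\lambda_0^{\ab}=0$, calls for the usual harmless separate treatment of the lowest mode.

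The hard part, exactly as in \cite{NoSjogren}, will be this last reduction for the derivative-laden kernels: because $\de$ and $\ko$ are genuine order-$N$ compositions of $\delta$ and $\delta^{*}$, their coefficients carry products of up to $N$ cotangent and tangent factors, and one must verify that each $\partial_\theta$ hitting the representation produces enough decay near $\theta=0,\pi$ — in concert with the factor $\partial_\theta\Upsilon$ and the measure $\Pi_\alpha\otimes\Pi_\beta$ — to absorb these singular coefficients, so that after each application of $\delta$ or $\delta^{*}$, and after the additional $\theta$- or $\varphi$-differentiation demanded by \eqref{sm}, \eqref{sm1}, \eqref{sm2}, the resulting expression still fits a representation to which the \cite[Section 4]{NoSjogren} estimates apply with all exponents in the admissible range secured by $\alpha,\beta\ge-1/2$. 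Performing this bookkeeping, and checking the analogous points for the vector-valued norms $\mathbb{X}$ and $L^{2}(t^{2M+2N-1}dt)$ (where the $t$-integration has to be carried out inside the pointwise kernel bounds, via Minkowski's integral inequality), is where essentially all the work of Section \ref{sec:ker} goes; the remainder of the argument runs closely parallel to \cite{NoSjogren}.
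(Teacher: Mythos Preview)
Your overall architecture is sound, and the reduction of the tilde kernels to the un-tilde ones via \eqref{jadro} together with the measure comparison $\m_{\alpha+1,\beta+1}^{+}(B)\simeq(\theta+\varphi)^{2}(\pi-\theta+\pi-\varphi)^{2}\m_{\ab}^{+}(B)$ is exactly what the paper does. The genuine gap is in your treatment of $\de$ and $\ko$. You propose to expand $\de=\sum_k a_k^{(N)}(\theta)\partial_\theta^{k}$ with coefficients that are polynomials of degree $\le N-k$ in $\cot\frac{\theta}{2}$ and $\tan\frac{\theta}{2}$, and then to verify that the differentiated Poisson representation ``produces enough decay near $\theta=0,\pi$ \dots\ to absorb these singular coefficients,'' claiming this ``runs closely parallel to \cite{NoSjogren}.'' But \cite{NoSjogren} never faces this problem: there the higher-order derivative is simply $\partial_\theta^{N}$, with no singular coefficients at all. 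One spatial differentiation of $\Psi^{\ab}(t,\q)$ gains only a factor $\q^{-1/2}$ (Lemma~\ref{4.8}), which does not by itself cancel a free factor of $\cot\frac{\theta}{2}$; making your expansion work would require an additional mechanism (e.g.\ integration by parts in $u,v$ against $d\Pi_\alpha\,d\Pi_\beta$) that you neither describe nor cite.

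The paper sidesteps the whole issue by a device you do not mention. Since $\J^{\ab}=\delta^{*}\delta+\lambda_0^{\ab}$ and $H_t^{\ab}$ satisfies the Laplace-type equation $\partial_t^{2}H_t^{\ab}=\J^{\ab}H_t^{\ab}$, one has
\[
\delta_2^{\textrm{even}}H_t^{\ab}=\delta^{*}\delta\,H_t^{\ab}=\partial_t^{2}H_t^{\ab}-\lambda_0^{\ab}H_t^{\ab},
\]
and iteration replaces $\de H_t^{\ab}$ by a linear combination of $\partial_t^{2j}H_t^{\ab}$ (plus one $\partial_\theta$ when $N$ is odd); an analogous identity based on $\delta\delta^{*}+\lambda_0^{\ab}$ handles $\ko\widetilde H_t^{\ab}$. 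This converts all the alternating $\delta,\delta^{*}$ factors into harmless time derivatives, so that only $\partial_\theta^{\epsilon}\partial_t^{K}\Psi^{\ab}$ with $\epsilon\in\{0,1,2\}$ ever needs to be estimated --- exactly what Lemma~\ref{4.8} supplies. With this reduction in hand, the Riesz, $g$-function, and multiplier kernels are all dispatched by the short computations you would expect.
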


The proofs of Propositions \ref{prop:L2} and \ref{prop:assoc} are given in Section \ref{sec:L^2}. The proof of Theorem \ref{thm:stand} is the most technical part of the paper and is located in Section \ref{sec:ker}.

\section{$L^2$-boundedness and kernel associations} \label{sec:L^2}

 In the arguments we give in this section we will frequently use, without further mention, the fact that each of the systems $\{\sqrt{2}\Phi_{2n}^{\ab}: n\ge 0\}$ and $\{\sqrt{2}\Phi_{2n+1}^{\ab}: n\ge 0\}$, if restricted to the interval $(0,\pi)$, forms an orthonormal basis in $L^2(d\mu_{\ab}^{+}).$

We first show Proposition \ref{prop:L2}. To prove $L^2$-boundedness of the Riesz operators and the square functions, we shall need the following result that explains the way the higher order `derivatives' $D^N$ act on $\Phi_{n}^{\ab}$.

\begin{lem}[{\cite[Corollary 4.3]{Symmetrized}}] \label{cor:C11}
Given $\el \ge 1$, we have
$$
D^{\el} \Phi_n^{\ab} = (-1)^{\el\slash 2 + (n+3\slash 2)\widetilde{\el}}
	\big(\lambda^{\ab}_{\langle n \rangle}-\lambda_{0}^{\ab}\big)^{\el\slash 2}
	\Phi_{n-(-1)^n\widetilde{\el}}^{\ab}\,, \qquad n \ge 1,
$$
where $\widetilde{\el}=0$ if $\el$ is even and
$\widetilde{\el}=1$ otherwise.
\end{lem}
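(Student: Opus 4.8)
The plan is to establish the identity for $\el=1$ and $\el=2$ by direct computation and then propagate it to all $\el$ by an induction in steps of two, based on the factorization $D^{2}=-(\mathbb{J}^{\ab}-\lambda_0^{\ab})$.

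First I would record the elementary fact that $D$ interchanges even and odd functions and has a particularly simple form on each parity class: since $f_{\textrm{odd}}$ vanishes for even $f$, the operator $D$ coincides with $\delta=d/d\theta$ on even functions, while a short computation rewriting $\cot\frac{\theta}{2}$ and $\tan\frac{\theta}{2}$ in terms of $\sin\theta$ and $\cos\theta$ shows that $D$ coincides with $-\delta^{*}$ on odd functions. Consequently $D^{2}=-(\mathbb{J}^{\ab}-\lambda_0^{\ab})$, and by \eqref{sresbb} this operator acts on each $\Phi_m^{\ab}$ as multiplication by $-(\lambda_{\langle m\rangle}^{\ab}-\lambda_0^{\ab})$.

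For the base case $\el=1$, write $n=2k$ or $n=2k-1$ with $k\ge1$. When $n=2k$, \eqref{Phi} gives $D\Phi_{2k}^{\ab}=\frac{1}{\sqrt2}\,\delta\P_k^{\ab}$, and comparing this with the expression for $\Phi_{2k-1}^{\ab}$ in \eqref{Phi} exhibits $D\Phi_{2k}^{\ab}$ as an explicit constant multiple of $(\lambda_k^{\ab}-\lambda_0^{\ab})^{1/2}\Phi_{2k-1}^{\ab}$. When $n=2k-1$, we have $D\Phi_{2k-1}^{\ab}=-\delta^{*}\Phi_{2k-1}^{\ab}$, which after substituting \eqref{Phi} and using $\delta^{*}\delta=\J^{\ab}-\lambda_0^{\ab}$ together with $\J^{\ab}\P_k^{\ab}=\lambda_k^{\ab}\P_k^{\ab}$ becomes an explicit constant multiple of $(\lambda_k^{\ab}-\lambda_0^{\ab})^{1/2}\Phi_{2k}^{\ab}$. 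In both cases $\langle n\rangle=k$, and with $\widetilde{\el}=1$ the target index $n-(-1)^n\widetilde{\el}$ is precisely the index appearing above, so after tracking the constants one recovers the asserted formula for $\el=1$; the case $\el=2$ is immediate from $D^{2}=-(\mathbb{J}^{\ab}-\lambda_0^{\ab})$, since then $\widetilde{\el}=0$, the target index is $n$, and the sign is $(-1)^{1}=(-1)^{\el/2}$.

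Finally, for the inductive step I would assume the formula for a given $\el\ge1$ and apply $D^{2}$ to both sides. Since $D^{\el}\Phi_n^{\ab}$ is a scalar multiple of $\Phi_{n-(-1)^n\widetilde{\el}}^{\ab}$ and a one-line check with $\langle m\rangle=\lfloor(m+1)/2\rfloor$ gives $\langle n-(-1)^n\widetilde{\el}\rangle=\langle n\rangle$, the operator $D^{2}$ multiplies it by $-(\lambda_{\langle n\rangle}^{\ab}-\lambda_0^{\ab})$; this turns $(\lambda_{\langle n\rangle}^{\ab}-\lambda_0^{\ab})^{\el/2}$ into $(\lambda_{\langle n\rangle}^{\ab}-\lambda_0^{\ab})^{(\el+2)/2}$, leaves $\widetilde{\el}$ and hence the target index $n-(-1)^n\widetilde{\el}$ unchanged, and shifts the exponent $\el/2+(n+3/2)\widetilde{\el}$ by $1$ because of the extra factor $-1$ — which is exactly the asserted formula for $\el+2$. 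I do not expect any serious obstacle here: the only genuine inputs are the reduction of $D$ to $\delta$ and $-\delta^{*}$ on functions of fixed parity and the factorization $\J^{\ab}=\delta^{*}\delta+\lambda_0^{\ab}$, both coming from the symmetrization construction of \cite{Symmetrized}; the one delicate point is the careful bookkeeping of signs required to match the exponent $(-1)^{\el/2+(n+3/2)\widetilde{\el}}$ in the base cases.
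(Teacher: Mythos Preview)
The paper does not prove this lemma at all; it is simply quoted as \cite[Corollary~4.3]{Symmetrized}, so there is no ``paper's own proof'' to compare against. Your strategy---identifying $D$ with $\delta$ on even functions and with $-\delta^{*}$ on odd functions, verifying $\ell=1,2$ by hand, and then inducting in steps of two using $D^{2}=-(\mathbb{J}^{\ab}-\lambda_0^{\ab})$---is exactly the natural one, and the inductive step is clean since indeed $\langle n-(-1)^{n}\widetilde{\ell}\,\rangle=\langle n\rangle$.

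One genuine caution about the base case, though. If you actually carry out the computation for $\ell=1$ and $n=2k$ even, \eqref{Phi} gives
\[
D\Phi_{2k}^{\ab}=\tfrac{1}{\sqrt2}\,\delta\P_k^{\ab}
=-(\lambda_k^{\ab}-\lambda_0^{\ab})^{1/2}\,\Phi_{2k-1}^{\ab},
\]
whereas the stated exponent $\ell/2+(n+3/2)\widetilde{\ell}=2k+2$ produces the sign $+1$; the case $n$ odd shows the same discrepancy. So ``after tracking the constants one recovers the asserted formula'' is not literally true as printed: the displayed sign appears to be off by a global factor $(-1)^{\widetilde{\ell}}$, presumably a transcription slip from \cite{Symmetrized}. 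This is harmless for everything downstream in the paper (Corollary~\ref{cor:delta} inherits the same sign, and only moduli are ever used in the $L^{2}$ and kernel arguments), but you should flag it rather than claim the signs match.
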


\begin{cor} \label{cor:delta}
Given $N\ge 1$, we have
\begin{align*}
\delta_N^{\emph{\textrm{even}}} \Phi_{2n}^{\ab} &= (-1)^{{N}\slash 2 - \widetilde{N}\slash 2}
	 \big(\lambda_n^{\ab}-\lambda_{0}^{\ab})^{N\slash 2}
	\Phi_{2n-\widetilde{N}}^{\ab}, \qquad n \ge 0,\\
\delta_N^{\emph{\textrm{odd}}} \Phi_{2n+1}^{\ab} &= (-1)^{{N}\slash 2 + \widetilde{N}\slash 2}
	 \big(\lambda_{n+1}^{\ab}-\lambda_{0}^{\ab})^{N\slash 2}
	\Phi_{2n+1+\widetilde{N}}^{\ab}, \qquad n \ge 0,
\end{align*}
where $\widetilde{N}=0$ if $N$ is even and $\widetilde{N}=1$ if N is odd, with the convention that $\Phi_k^{\ab}\equiv 0$ if $k<0$.
\end{cor}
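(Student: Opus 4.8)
The plan is to derive Corollary~\ref{cor:delta} directly from Lemma~\ref{cor:C11} by specializing the index $n$ to even and odd values and carefully tracking the sign exponents. The key observation is that $\delta_N^{\textrm{even}}$ and $\delta_N^{\textrm{odd}}$ were defined precisely as the restrictions of $D^N$ to even and odd functions, and since $\Phi_{2n}^{\ab}$ is even while $\Phi_{2n+1}^{\ab}$ is odd (recall $\Phi_n^{\ab}$ is even iff $n$ is even, by \eqref{Phi} and the fact that $\P_k^{\ab}$ is even and $\delta\P_k^{\ab}$ is odd), we have $\delta_N^{\textrm{even}}\Phi_{2n}^{\ab} = D^N\Phi_{2n}^{\ab}$ and $\delta_N^{\textrm{odd}}\Phi_{2n+1}^{\ab} = D^N\Phi_{2n+1}^{\ab}$. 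The case $n=0$ in the first identity and the general formula in the second then follow from Lemma~\ref{cor:C11} with $\el=N$.

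First I would handle the even case. Setting $n \mapsto 2n$ in Lemma~\ref{cor:C11} (valid for $2n\ge 1$, i.e. $n\ge 1$), we get $\langle 2n\rangle = n$, so $\lambda_{\langle 2n\rangle}^{\ab} = \lambda_n^{\ab}$, and since $(-1)^{2n}=1$ the shifted index is $2n - \widetilde{N}$. The sign exponent becomes $N/2 + (2n+3/2)\widetilde{N}$; reducing modulo $2$, the term $(2n+3/2)\widetilde{N}$ contributes $2n\widetilde{N}$ (even, hence irrelevant to the sign when $\widetilde N=1$) plus $(3/2)\widetilde N$, and one checks this rearranges to $N/2 - \widetilde N/2$ modulo $2$ — the discrepancy between $(3/2)\widetilde N$ and $-\widetilde N/2$ is $2\widetilde N$, again even. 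For $n=0$, $\delta_N^{\textrm{even}}\Phi_0^{\ab} = D^N\Phi_0^{\ab} = 0$ since $\Phi_0^{\ab}$ is constant and $D$ kills constants; meanwhile the right-hand side contains the factor $(\lambda_0^{\ab}-\lambda_0^{\ab})^{N/2}=0$, so the formula holds trivially. The odd case is analogous: putting $n\mapsto 2n+1$ gives $\langle 2n+1\rangle = n+1$, hence $\lambda_{n+1}^{\ab}$, and since $(-1)^{2n+1}=-1$ the shifted index is $2n+1+\widetilde N$; the sign exponent $N/2 + (2n+1+3/2)\widetilde N$ reduces modulo $2$ to $N/2 + \widetilde N/2$ after discarding the even contributions, matching the claim.

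There is no substantial obstacle here — the corollary is essentially a bookkeeping exercise in reducing exponents of $-1$ modulo $2$ and invoking the already-established Lemma~\ref{cor:C11} together with the even/odd parity of the $\Phi_n^{\ab}$. The only point requiring mild care is the sign computation, where one must consistently use that $(-1)^{2k}=1$ to discard the $n$-dependent and other even contributions, and verify that $(3/2)\widetilde N$ and $\mp\widetilde N/2$ differ by an even integer (namely $2\widetilde N$) so that they produce the same sign. The convention $\Phi_k^{\ab}\equiv 0$ for $k<0$ ensures the formulas remain meaningful when $\widetilde N=1$ and $2n-\widetilde N$ could a priori be negative (which happens only for $n=0$, already covered by the vanishing factor).
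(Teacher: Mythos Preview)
Your proposal is correct and matches the paper's implicit approach: the paper states Corollary~\ref{cor:delta} immediately after Lemma~\ref{cor:C11} with no proof, intending it to follow by exactly the specialization $n\mapsto 2n$ and $n\mapsto 2n+1$ that you carry out. Your handling of the sign reductions modulo $2$ and of the boundary case $n=0$ (where both sides vanish) is accurate.
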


\begin{proof}[Proof of Proposition \ref{prop:L2}]
For $(M^{\ab})^{+}$ and $(\widetilde{M}^{\ab})^{+}$ the conclusion is immediate, since both $m_{\phi}$ and $m_{\nu}$ are bounded functions on the set $\{(\lambda_{n}^{\ab})^{1/2}: n\ge 0\}$.

Next, we treat the maximal operator. Since $({H}_*^{\ab})^{+}$ coincides, up to a constant factor, with the Jacobi-Poisson semigroup maximal operator $\mathcal{H}_*^{\ab}$ investigated in \cite{NoSjogren}, its $L^2(d\mu_{\ab}^{+})$-boundedness follows immediately from the analogous result for $\mathcal{H}_*^{\ab}$ stated in \cite[Proposition 2.2]{NoSjogren}. Now observe that this property of $({H}_*^{\alpha+1,\beta+1})^{+}$ implies the same for $(\widetilde{H}_*^{\ab})^{+}.$ Indeed, for $f\in L^{2}(d\mu_{\ab}^{+})$ we have

\begin{align*}
(\widetilde{H}_*^{\ab})^{+}f(\theta)&=\underset{t>0}{\sup}\Big|\frac{\sin\theta}{4}\int_0^{\pi} H_t^{\alpha+1,\beta+1}(\theta,\varphi)f(\varphi)\sin\varphi\, d\mu_{\ab}^{+}(\varphi)\Big|\\
&=\sin\theta\, (H_*^{\alpha+1,\beta+1})^{+}\Big(\frac{f}{\sin}\Big)(\theta).
\end{align*}
Thus, using the $L^2$-boundedness of $(H_*^{\alpha+1, \beta+1})^{+}$, we get
\begin{align*}
\|(\widetilde{H}_*^{\ab})^{+}f\|_{L^2(d\mu_{\ab}^{+})}&=\bigg(\int_0^{\pi}\Big|({H}_*^{\alpha+1,\beta+1})^{+}\Big(\frac{f}{\sin}\Big)(\theta)\Big|^{2}\sin^{2}\theta\, d\mu_{\ab}^{+}(\theta)\bigg)^{1/2}\\
&=2\Big\|({H}_*^{\alpha+1,\beta+1})^{+}\Big(\frac{f}{\sin}\Big)\Big\|_{L^2(d\mu_{\alpha+1,\beta+1}^{+})}\\
&\lesssim \Big\|\frac{f}{\sin}\Big\|_{L^2(d\mu_{\alpha+1,\beta+1}^{+})}\simeq \|f\|_{L^2(d\mu_{\ab}^{+})}.
\end{align*}

We pass to the cases of $(R_{N}^{\ab})^{+}$ and $(\widetilde{R}_{N}^{\ab})^{+}$, and focus only on $(R_{N}^{\ab})^{+}$ since the argument for $(\widetilde{R}_{N}^{\ab})^{+}$ is essentially the same. Using Corollary \ref{cor:delta} we can write
\begin{equation*}
(R_{N}^{\ab})^{+}f=\sum_{n=0}^{\infty}(\lambda_{n}^{\ab})^{-N/2}\langle f,\Phi_{2n}^{\ab} \rangle_{d\m_{\ab}^{+}} (-1)^{{N}\slash 2 - \widetilde{N}\slash 2}(\lambda_{n}^{\ab}-\lambda_{0}^{\ab})^{N/2}\Phi_{2n-\widetilde{N}}^{\ab}. 
\end{equation*}
From here the conclusion follows by Parseval's identity,
\begin{align*}
\|(R_{N}^{\ab})^{+}f\|_{L^2(d\m_{\ab}^{+})}^2&=\frac{1}{2}\sum_{n=0}^{\infty}\Big(\frac{\lambda_{n}^{\ab}-\lambda_{0}^{\ab}}{\lambda_{n}^{\ab}}\Big)^N\big|\langle f,\Phi_{2n}^{\ab} \rangle_{d\m_{\ab}^{+}}\big|^2\\
&\le\frac{1}{2}\sum_{n=0}^{\infty}\big|\langle f,\Phi_{2n}^{\ab} \rangle_{d\m_{\ab}^{+}}\big|^2=\frac{1}{4}\|f\|_{L^2(d\m_{\ab}^{+})}^2.
\end{align*}
Notice that, by convention, in the exceptional case $\alpha+\beta+1=0$ there is no term with $n=0$ in the last few sums.

Finally, to prove the assertion for the $g$-functions we treat $(\widetilde{G}_{M,N}^{\ab})^{+}$ and leave the analogous case of $({G}_{M,N}^{\ab})^{+}$ to the reader. In view of Corollary \ref{cor:delta}, \eqref{estsymm} and \eqref{growthFJnew}, we have
\begin{align*}
\partial_t^M \ko (\widetilde{H}_t^{\ab})^{+}f(\theta)&=\sum_{n=0}^{\infty}(-1)^{M}(\lambda_{n+1}^{\ab})^{M/2} e^{-t(\lambda_{n+1}^{\ab})^{1/2}}
	\langle f, \Phi_{2n+1}^{\ab} \rangle_{d\m_{\ab}^{+}} \ko \Phi_{2n+1}^{\ab}(\theta)\\
&=\sum_{n=0}^{\infty}(-1)^{M}(\lambda_{n+1}^{\ab})^{M/2} e^{-t(\lambda_{n+1}^{\ab})^{1/2}}\langle f, \Phi_{2n+1}^{\ab} \rangle_{d\m_{\ab}^{+}}\\
&\quad\times(-1)^{N/2+\widetilde{N}/2}(\lambda_{n+1}^{\ab}-\lambda_{0}^{\ab})^{N/2}\Phi_{2n+1+\widetilde{N}}^{\ab}(\theta).
\end{align*}
Therefore, changing the order of integration and using Parseval's identity, we get
\begin{align*}
\|(\widetilde{G}_{M,N}^{\ab})^{+}f\|_{L^2(d\m_{\ab}^{+})}^2 &=\int_{0}^{\infty}\int_{0}^{\pi}|\partial_t^M \ko(\widetilde{H}_t^{\ab})^{+}f(\theta)|^{2}d\m_{\ab}^{+}(\theta)\,t^{2M+2N-1}dt\\
&=\frac{1}{2}\int_{0}^{\infty}\sum_{n=0}^{\infty}(\lambda_{n+1}^{\ab})^{M}(\lambda_{n+1}^{\ab}-\lambda_{0}^{\ab})^{N} e^{-2t(\lambda_{n+1}^{\ab})^{1/2}}\\
&\quad\times\big|\langle f, \Phi_{2n+1}^{\ab} \rangle_{d\m_{\ab}^{+}}\big|^{2}t^{2M+2N-1}dt.
\end{align*}
Integrating in $t$, we see that
\begin{align*}
\|(\widetilde{G}_{M,N}^{\ab})^{+}f\|_{L^2(d\m_{\ab}^{+})}^2&=\bigg(\frac{\lambda_{n+1}^{\ab}-\lambda_{0}^{\ab}}{\lambda_{n+1}^{\ab}}\bigg)^{N}\,\frac{\Gamma(2M+2N)}{2^{2M+2N+1}}\sum_{n=0}^{\infty}\big|\langle f, \Phi_{2n+1}^{\ab} \rangle_{d\m_{\ab}^{+}}\big|^{2}\\
&\le \frac{\Gamma(2M+2N)}{2^{2M+2N+1}}\sum_{n=0}^{\infty}\big|\langle f, \Phi_{2n+1}^{\ab} \rangle_{d\m_{\ab}^{+}}\big|^{2}=\frac{\Gamma(2M+2N)}{2^{2M+2N+2}}\|f\|_{L^2(d\m_{\ab}^{+})}^2.
\end{align*}

The proof of Proposition \ref{prop:L2} is finished.
\end{proof}

A general outline of the procedure to prove Proposition \ref{prop:assoc} can be found in the proof of \cite[Proposition 2.3]{NoSjogren}. In case of each of the operators in question, the treatment relies on similar arguments. Therefore, for the sake of brevity, here we give details only for the two representative cases $(R_{N}^{\ab})^{+}$ and $(\widetilde{G}_{M,N}^{\ab})^{+}.$ As for the remaining cases, the reader might like to consult the corresponding proofs in the Laguerre setting given in \cite{NS} (Riesz transforms and the maximal operator), in \cite{szarek} ($g$-functions) and in \cite{szarek2} (Laplace multipliers in a more general Dunkl setting); see also the proof of \cite[Proposition 2.3]{NoSjogren} for Laguerre-based sketches of the proofs in the Jacobi situation.

In the proof below we will make use of some kernel estimates that are proved independently in Section \ref{sec:ker}. We emphasize that the kernels we are dealing with have non-integrable singularities along the diagonal and operations like applying Fubini's theorem or changing the order of integration (summation) and differentiation are delicate matters. Therefore we provide fairly detailed explanations in the relevant places.

\begin{proof}[Proof of Proposition \ref{prop:assoc}; the case of $(R_{N}^{\ab})^{+}$]
By density arguments the proof reduces to showing that for $f,g\in C_{c}^{\infty}(0,\pi)$ such that $\support f\cap \support g=\emptyset$ we have

\begin{equation}\label{tozs}
\big\langle (R_{N}^{\ab})^{+}f,g \big\rangle_{d\mu_{\ab}^{+}}=\int_{0}^{\pi}\int_{0}^{\pi} 
R_{N}^{\ab}(\theta,\varphi)f(\varphi)\overline{g(\theta)}\,d\mu_{\ab}^{+}(\varphi)\,d\mu_{\ab}^{+}(\theta).
\end{equation}
By orthogonality of the system $\{\de \Phi_{2n}^{\ab}:n\ge 0\}$ it follows that
\begin{align*}
\big\langle (R_{N}^{\ab})^{+}f,g \big\rangle_{d\mu_{\ab}^{+}}=\sum_{n=0}^{\infty}  (\lambda_{n}^{\ab})^{-N/2}
	\langle f, \Phi_{2n}^{\ab} \rangle_{d\m_{\ab}^{+}}\overline{\langle g, \de \Phi_{2n}^{\ab} \rangle_{d\m_{\ab}^{+}}}
\end{align*}
(recall that in the critical case $\alpha+\beta=-1$ the summation here and in few places below starts actually from $n=1$). Thus it remains to show that the right-hand sides here and in \eqref{tozs} coincide. This will be done once we justify the following chain of identities:
\begin{align*}
&\Gamma(N)\int_{0}^{\pi}\int_{0}^{\pi} R_{N}^{\ab}(\theta,\varphi)f(\varphi)\overline{g(\theta)}\,d\mu_{\ab}^{+}(\varphi)\,d\mu_{\ab}^{+}(\theta)\\
&=\int_0^{\infty}\int_{0}^{\pi}\int_{0}^{\pi} \de H_t^{\ab}(\theta,\varphi) f(\varphi)\overline{g(\theta)}\,d\mu_{\ab}^{+}(\varphi)\,d\mu_{\ab}^{+}(\theta)\,t^{N-1}dt\\
&=\int_0^{\infty}\int_{0}^{\pi}\int_{0}^{\pi}\sum_{n=0}^{\infty}e^{-t(\lambda_{n}^{\ab})^{1/2}}\de \Phi_{2n}^{\ab}(\theta)\Phi_{2n}^{\ab}(\varphi) f(\varphi)\overline{g(\theta)}\,d\mu_{\ab}^{+}(\varphi)\,d\mu_{\ab}^{+}(\theta)\,t^{N-1}dt\\
&=\int_0^{\infty}\sum_{n=0}^{\infty} e^{-t(\lambda_{n}^{\ab})^{1/2}}{\langle f, \Phi_{2n}^{\ab}\rangle}_{d\m_{\ab}^{+}} \overline{\langle g,\de \Phi_{2n}^{\ab}\rangle_{d\m_{\ab}^{+}}}\,t^{N-1}dt\\
&=\Gamma(N)\sum_{n=0}^{\infty} (\lambda_{n}^{\ab})^{-N/2}{\langle f,\Phi_{2n}^{\ab}\rangle}_{d\m_{\ab}^{+}} \overline{\langle g,\de \Phi_{2n}^{\ab}\rangle_{d\m_{\ab}^{+}}}.
\end{align*}

The first identity is a consequence of Fubini's theorem. Its application is indeed legitimate since
$$\int_{0}^{\pi}\int_{0}^{\pi}\int_0^{\infty}|\de H_t^{\ab}(\theta,\varphi) f(\varphi)\overline{g(\theta)}|\,t^{N-1}dt\,d\mu_{\ab}^{+}(\varphi)d\mu_{\ab}^{+}(\theta)<\infty,$$
by the assumptions made on the supports of $f$ and $g$ and the estimate
$$\int_0^{\infty}|\de H_t^{\ab}(\theta,\varphi)| t^{N-1}dt\lesssim \frac{1}{\mu_{\ab}^{+}(B(\theta,|\theta-\varphi|))}$$
proved in Section \ref{sec:ker} while showing the growth condition for $R_{N}^{\ab}(\theta,\varphi)$. The second identity follows with the aid of \eqref{estsymm} and Corollary \ref{cor:delta}. The third equality is valid in view of Fubini's theorem, since
\begin{align*}
&\sum_{n=0}^{\infty} e^{-t(\lambda_{n}^{\ab})^{1/2}}{\langle |f|,|\Phi_{2n}^{\ab}|\rangle}_{d\m_{\ab}^{+}}{\langle |g|,|\de \Phi_{2n}^{\ab}|\rangle}_{d\m_{\ab}^{+}}<\infty,
\end{align*}
see \eqref{estsymm} and Corollary \ref{cor:delta}. Still another application of Fubini's theorem justifies the fourth identity. Its use is possible since
\begin{align*}
\sum_{n=0}^{\infty} (\lambda_{n}^{\ab})^{-N/2}\big|{\langle f,\de \Phi_{2n}^{\ab}\rangle}_{d\m_{\ab}^{+}}\big| \big|\langle g,\Phi_{2n}^{\ab}\rangle_{d\m_{\ab}^{+}}\big|<\infty,
\end{align*}
because of orthogonality of each of the systems $\{\de \Phi_{2n}^{\ab}:n\ge 0\}$ and $\{\Phi_{2n}^{\ab}:n\ge 0\}$ and Schwarz' inequality.
\end{proof}

\begin{proof}[Proof of Proposition \ref{prop:assoc}; the case of $(\widetilde{G}_{M,N}^{\ab})^{+}$] Let $\mathbb{B}=L^2(t^{2M+2N-1}dt)$. Taking into account the fact that the space $L_{\mathbb{B}}^2((\support f)^{c},d\mu_{\ab}^{+})$ is self-dual and standard density arguments we infer that it is enough to check that
\begin{align} \nonumber
&{\Big\langle \big\{\partial_{t}^{M}\ko(\widetilde{H}_t^{\ab})^{+}f\big\}_{t>0},h 
\Big\rangle}_{L_{\mathbb{B}}^2(d\mu_{\ab}^{+})}\\
&=\bigg\langle\int_{0}^{\pi}\big\{\partial_{t}^{M}\ko\widetilde{H}_t^{\ab}(\theta,\varphi)\big\}_{t>0}f(\varphi)d\mu_{\ab}^{+}(\varphi),h\bigg\rangle_{L_{\mathbb{B}}^2(d\mu_{\ab}^{+})},\label{stowg}
\end{align}
for any $f\in C_{c}^{\infty}(0,\pi)$ and $h(\theta,t)=h_{1}(\theta)h_{2}(t)$, where $h_{1}\in C_{c}^{\infty}(0,\pi)$, $h_{2}\in C_{c}^{\infty}(0,\infty)$ and $\support f \cap \support h_{1}=\emptyset$ (the linear span of functions $h$ of this form is dense in $L^2\big((\support f)^{c}\times(0,\infty),d\mu_{\ab}^{+}\otimes t^{2M+2N-1}dt\big)$).

We begin with considering the left-hand side of \eqref{stowg},
\begin{align*}
&{\big\langle \{\partial_{t}^{M}\ko(\widetilde{H}_t^{\ab})^{+}f\}_{t>0},h \big\rangle}_{L_{\mathbb{B}}^2(d\mu_{\ab}^{+})}\\
&=\int_{0}^{\infty}\int_{0}^{\pi}\partial_{t}^{M}\ko (\widetilde{H}_t^{\ab})^{+}f(\theta)\overline{h_{1}(\theta)}\,d\mu_{\ab}^{+}(\theta)\,\overline{h_{2}(t)}\,t^{2M+2N-1}dt\\
&=\int_{0}^{\infty}\int_{0}^{\pi}\sum_{n=0}^{\infty} (-1)^{M}(\lambda_{n+1}^{\ab})^{M/2}e^{-t(\lambda_{n+1}^{\ab})^{1/2}}{\langle f,\Phi_{2n+1}^{\ab}\rangle}_{d\m_{\ab}^{+}}\\
&\quad\times\ko \Phi_{2n+1}^{\ab}(\theta)\overline{h_{1}(\theta)}\,d\m_{\ab}^{+}(\theta)\,\overline{h_{2}(t)}\,t^{2M+2N-1}\,dt\\
&=\int_{0}^{\infty}\sum_{n=0}^{\infty} (-1)^{M}(\lambda_{n+1}^{\ab})^{M/2}e^{-t(\lambda_{n+1}^{\ab})^{1/2}}{\langle f,\Phi_{2n+1}^{\ab}\rangle}_{d\m_{\ab}^{+}}\overline{{\langle h_{1},\ko \Phi_{2n+1}^{\ab}\rangle}_{d\m_{\ab}^{+}}}\,\overline{h_{2}(t)}\,t^{2M+2N-1}\,dt.
\end{align*}
Here the first identity follows by Fubini's theorem. Its application is justified since
\begin{align*}
&\int_{0}^{\pi}\int_{0}^{\infty}|\partial_{t}^{M}\ko(\widetilde{H}_t^{\ab})^{+}f(\theta)\overline{h_{1}(\theta)}\overline{h_{2}(t)}|\,t^{2M+2N-1}\,dt\,d\mu_{\ab}^{+}(\theta)\\
&\le\|(\widetilde{G}_{M,N}^{\ab})^{+}(f)(\theta)\|_{L^{2}(d\mu_{\ab}^{+})}\|h_{1}\|_{L^2(d\mu_{\ab}^{+})}\,\|h_{2}\|_{L^2(t^{2M+2N-1}\,dt)}<\infty,
\end{align*}
by Schwarz' inequality and the $L^2$-boundedness of $(\widetilde{G}_{M,N}^{\ab})^{+}$, see Proposition \ref{prop:L2}. The second equality is obtained by exchanging the order of differentiation and summation, and is verified by means of \eqref{estsymm} and Corollary \ref{cor:delta}. The third identity is a consequence of Fubini's theorem, which can be applied because
$$\sum_{n=0}^{\infty} (\lambda_{n+1}^{\ab})^{M/2}e^{-t(\lambda_{n+1}^{\ab})^{1/2}}\big|{\langle f,\Phi_{2n+1}^{\ab}\rangle}_{d\m_{\ab}^{+}}\big|\,{\langle |h_{1}|,|\ko \Phi_{2n+1}^{\ab}|\rangle}_{d\m_{\ab}^{+}}<\infty,$$
see \eqref{estsymm}, \eqref{growthFJnew} and Corollary \ref{cor:delta}.

We now focus on the right-hand side of \eqref{stowg}. We have
\begin{align*}
&\bigg\langle\int_{0}^{\pi}\big\{\partial_{t}^{M}\ko\widetilde{H}_t^{\ab}(\theta,\varphi)\big\}_{t>0}f(\varphi)d\mu_{\ab}^{+}(\varphi),h\bigg\rangle_{L_{\mathbb{B}}^2(d\mu_{\ab}^{+})}\\
&=\int_{0}^{\infty}\int_{0}^{\pi}\int_{0}^{\pi}\partial_{t}^{M}\,\ko\widetilde{H}_t^{\ab}(\theta,\varphi)f(\varphi)\overline{h_{1}(\theta)}\,d\mu_{\ab}^{+}(\varphi)\,d\mu_{\ab}^{+}(\theta)\,\overline{h_{2}(t)}\,t^{2M+2N-1}\,dt\\
&=\int_{0}^{\infty}\int_{0}^{\pi}\int_{0}^{\pi}\sum_{n=0}^{\infty} (-1)^{M}(\lambda_{n+1}^{\ab})^{M/2}e^{-t(\lambda_{n+1}^{\ab})^{1/2}} \ko\Phi_{2n+1}^{\ab}(\theta)\Phi_{2n+1}^{\ab}(\varphi)\\
&\quad\times f(\varphi)\overline{h_{1}(\theta)}\,d\mu_{\ab}^{+}(\varphi)\,d\mu_{\ab}^{+}(\theta)\,\overline{h_{2}(t)}\,t^{2M+2N-1}\,dt\\
&=\int_{0}^{\infty}\sum_{n=0}^{\infty} (-1)^{M}(\lambda_{n+1}^{\ab})^{M/2}e^{-t(\lambda_{n+1}^{\ab})^{1/2}}{\langle f,\Phi_{2n+1}^{\ab}\rangle}_{d\m_{\ab}^{+}}\overline{\langle h_{1},\ko\Phi_{2n+1}^{\ab}\rangle_{d\m_{\ab}^{+}}}\,\overline{h_{2}(t)}\,t^{2M+2N-1}\,dt.
\end{align*}
The first of the above identities follows by interchanging the order of integration. Fubini's theorem can be applied since, by the growth estimate for the kernel $\{\partial_t^M\ko\widetilde{H}_{t}^{\ab}(\theta,\varphi)\}_{t>0}$ proved in Section \ref{sec:ker} and the fact that the supports of $f$ and $h_1$ are disjoint and bounded, we know that
\begin{align*}
&\int_{0}^{\pi}\int_{0}^{\pi}\int_{0}^{\infty}|\partial_{t}^{M}\ko\widetilde{H}_t^{\ab}(\theta,\varphi)f(\varphi)\overline{h_{1}(\theta)}\overline{h_{2}(t)}|t^{2M+2N-1}\,dt\,d\mu_{\ab}^{+}(\varphi)\,d\mu_{\ab}^{+}(\theta)\\
&\le\|f\|_{\infty}\,\|h_{1}\|_{\infty}\,\|h_{2}\|_{L^{2}(t^{2M+2N-1}\,dt)}\\
&\quad\times\int_{\support{h_{1}}}\int_{\support{f}}\big\|\partial_{t}^{M}\ko\widetilde{H}_t^{\ab}(\theta,\varphi)\big\|_{L^{2}(t^{2M+2N-1}\,dt)}\,d\mu_{\ab}^{+}(\varphi)d\mu_{\ab}^{+}(\theta)\\
&\lesssim\int_{\support{h_{1}}}\int_{\support{f}}\frac{1}{\mu_{\ab}^{+}(B(\theta,|\theta-\varphi|))}\,d\mu_{\ab}^{+}(\varphi)\,d\mu_{\ab}^{+}(\theta)<\infty.
\end{align*}
Entering with the derivatives under the summation in the second identity is also legitimate, as can be verified by means of Corollary \ref{cor:delta} and \eqref{estsymm}. Finally, in the third equality we interchange integration with summation, and this is justified since the estimate
\begin{align*}
&\sum_{n=0}^{\infty} (\lambda_{n+1}^{\ab})^{M/2}e^{-t(\lambda_{n+1}^{\ab})^{1/2}}{\langle |f|,|\Phi_{2n+1}^{\ab}|\rangle}_{d\m_{\ab}^{+}}{\langle |h_{1}|,|\ko\Phi_{2n+1}^{\ab}|\rangle}_{d\m_{\ab}^{+}}<\infty,
\end{align*}
see Corollary \ref{cor:delta} and \eqref{estsymm}, enables us to apply Fubini's theorem.

We conclude that both sides of \eqref{stowg} coincide. This finishes the proof.

\end{proof}

\section{Kernel estimates} \label{sec:ker}

This section is devoted to proving the kernel estimates claimed in Theorem \ref{thm:stand}. All kernels we need to estimate are expressible via the kernels $H_{t}^{\alpha,\beta}(\theta,\varphi)$ and $\widetilde{H}{}_{t}^{\alpha,\beta}(\theta,\varphi)$, see \eqref{deco} and \eqref{jadro}.
Recall that $H{}_{t}^{\alpha,\beta}(\theta,\varphi)$ is, up to the factor $1/2$, the Jacobi-Poisson kernel considered in \cite{NoSjogren}.

The following symmetric and nonnegative expression
for $H{}_{t}^{\alpha,\beta}(\theta,\varphi)$ was derived in \cite[Proposition 4.1]{NoSjogren} and is based on the product formula 
for Jacobi polynomials due to Dijksma and Koornwinder \cite{DK}. For each $\alpha,\beta\ge-1/2$,

\begin{equation} \label{PJker}
H_t^{\ab}(\theta,\varphi) = c_{\alpha,\beta} \,\sinh\frac{t}{2}
	\iint \frac{d\Pi_{\alpha}(u)\,d\Pi_{\beta}(v)}
	{(\cosh\frac{t}{2}-1 + q(\theta,\varphi,u,v))^{\alpha+\beta+2}},
\end{equation}
where $c_{\alpha,\beta} =2^{-\alpha-\beta-2}\slash \m_{\ab}^{+}(0,\pi)$, the auxiliary function $q$ is given by
$$
q(\theta,\varphi,u,v) = 1 - u \sin\frac{\theta}2 \sin\frac{\varphi}2
	- v \cos\frac{\theta}2 \cos\frac{\varphi}2,
$$ 
and $\Pi_{\alpha}$ is a probability measure on the interval $[-1,1]$
defined for $\alpha>-1\slash 2$ by
$$
d\Pi_{\alpha}(u) = \frac{\Gamma(\alpha+1)}{\sqrt{\pi}\Gamma(\alpha+1\slash 2)}
	(1-u^2)^{\alpha-1\slash 2} du.
$$
In the limit case $\alpha=-1\slash 2$, we put
$$
\Pi_{-1\slash 2} = \frac{1}{2}(\eta_{-1}+\eta_{1}),
$$ where $\eta_{-1}$ and $\eta_{1}$ denote point masses at $-1$ and $1$, respectively.
Note that $\Pi_{-1\slash 2}$ is the weak limit of $\Pi_{\alpha}$ as $\alpha\rightarrow(-1/2)^{+}$. An immediate consequence of (\ref{PJker}) and \eqref{jadro} is the representation
\begin{equation*}
\widetilde{H}_t^{\ab}(\theta,\varphi) = \widetilde{c}_{\alpha,\beta} \,\sin\theta\,\sin\varphi\,\sinh\frac{t}{2}
	\iint \frac{d\Pi_{\alpha+1}(u)\,d\Pi_{\beta+1}(v)}
	{(\cosh\frac{t}{2}-1 + q(\theta,\varphi,u,v))^{\alpha+\beta+4}},
\end{equation*}
where $\widetilde{c}_{\alpha,\beta}=c_{\alpha+1,\beta+1}/4.$

To prove Theorem \ref{thm:stand} we will need some preparatory results. The first of them describes the measure of the interval $B(\theta,|\theta-\varphi|)$ and is valid for all $\alpha,\beta > -1.$  
\begin{lem}[{\cite[Lemma 4.2]{NoSjogren}}] \label{ball}
For all $\theta,\varphi \in (0,\pi)$, one has 
$$
\m_{\ab}^{+}\big( B(\theta,|\theta-\varphi|)\big) \simeq |\theta-\varphi| (\theta+\varphi)^{2\alpha+1}
	(\pi-\theta+\pi-\varphi)^{2\beta+1}.
$$
\end{lem}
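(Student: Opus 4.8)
\textbf{Proof plan for Lemma \ref{ball}.}
The plan is to compute $\m_{\ab}^{+}\big(B(\theta,|\theta-\varphi|)\big)=\int_{B(\theta,|\theta-\varphi|)}\big(\sin\frac{s}{2}\big)^{2\alpha+1}\big(\cos\frac{s}{2}\big)^{2\beta+1}\,ds$ directly, exploiting the fact that on $(0,\pi)$ one has $\sin\frac{s}{2}\simeq s$ and $\cos\frac{s}{2}\simeq\pi-s$, so the density behaves like $s^{2\alpha+1}(\pi-s)^{2\beta+1}$. Writing $r=|\theta-\varphi|$, the ball is the interval $(\theta-r,\theta+r)\cap(0,\pi)$, whose length is comparable to $r$ unless it is truncated at an endpoint; since $r\le\max(\theta,\varphi)\le\theta+\varphi$ and $r\le\pi-\min(\theta,\varphi)\le(\pi-\theta)+(\pi-\varphi)$, the truncation only shortens the interval by a bounded proportion, so $|B(\theta,r)|\simeq r$ in all cases. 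First I would reduce to estimating $\int_{B(\theta,r)}s^{2\alpha+1}(\pi-s)^{2\beta+1}\,ds$.

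Next, I would observe that on the interval $B(\theta,r)$ the factor $s$ is comparable to a fixed quantity, and likewise $\pi-s$. Indeed, for $s\in(\theta-r,\theta+r)\cap(0,\pi)$ we have $s\le\theta+r\le\theta+\varphi$ trivially, and $s\ge\theta-r$; if $\theta-r>0$ then since $r=|\theta-\varphi|$ one checks $\theta-r=\min(\theta,2\theta-\varphi)\gtrsim\theta+\varphi$ is false in general, so the cleaner route is: either $\varphi\le\theta$, in which case $s\ge\theta-r=\varphi\ge\frac12(\theta+\varphi)\cdot\frac{2\varphi}{\theta+\varphi}$ — here one splits further according to whether $\varphi\simeq\theta$ or $\varphi\ll\theta$ — or $\varphi>\theta$, handled symmetrically. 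A more economical argument: both endpoints of $B(\theta,r)$ lie in $[\min(\theta,\varphi),\max(\theta,\varphi)]$ up to the truncation, hence for every $s$ in the ball, $\min(\theta,\varphi)\lesssim s+r\simeq\theta+\varphi$, and a short case check gives $s\simeq\theta+\varphi$ throughout when $s$ is bounded away from $0$; when the ball abuts $0$ one integrates $s^{2\alpha+1}$ explicitly and uses $2\alpha+1>-1$ (valid since $\alpha>-1$) to get a term $\simeq r^{2\alpha+2}\simeq r(\theta+\varphi)^{2\alpha+1}$, again matching. The same analysis near $s=\pi$ yields the factor $(\pi-\theta+\pi-\varphi)^{2\beta+1}$, using $2\beta+1>-1$.

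Combining the length estimate with the two comparabilities of the density factors over the ball gives
$$
\m_{\ab}^{+}\big(B(\theta,|\theta-\varphi|)\big)\simeq |\theta-\varphi|\,(\theta+\varphi)^{2\alpha+1}\,(\pi-\theta+\pi-\varphi)^{2\beta+1},
$$
as claimed. The main obstacle is the bookkeeping of the several geometric configurations — the ball well inside $(0,\pi)$, the ball truncated near $0$, the ball truncated near $\pi$, and the subcases $\varphi$ comparable to versus much smaller (or much larger) than $\theta$ — and checking that in each the integral of $s^{2\alpha+1}(\pi-s)^{2\beta+1}$ produces exactly the asserted product; the exponent conditions $\alpha,\beta>-1$ are used precisely to ensure the relevant one-sided integrals near the endpoints converge and scale like powers of $r$. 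Since this is the content of \cite[Lemma 4.2]{NoSjogren}, I would simply invoke that reference rather than reproduce the case analysis.
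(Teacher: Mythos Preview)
The paper does not prove this lemma at all; it is stated with a citation to \cite[Lemma 4.2]{NoSjogren} and used as a black box. Your proposal ends by doing exactly the same thing, so you are in agreement with the paper, and the sketch you give beforehand---reducing to the power density $s^{2\alpha+1}(\pi-s)^{2\beta+1}$ and splitting into cases according to whether the ball is truncated at an endpoint---is the standard way such an estimate is verified, though your middle paragraph is a bit tangled and would need cleaning up if you actually wrote it out.
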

\begin{cor}\label{cor:compare}
Let $\alpha,\beta>-1$. Given any $\gamma_{1},\gamma_{2}\ge0$, we have
$$
\mu_{\alpha+\gamma_{1},\beta+\gamma_{2}}^{+}(B(\theta,|\theta-\varphi|))\simeq(\theta+\varphi)^{2\gamma_{1}}(\pi-\theta+\pi-\varphi)^{2\gamma_{2}}\mu_{\alpha,\beta}^{+}(B(\theta,|\theta-\varphi|)),
$$
uniformly in $\theta,\varphi\in(0,\pi).$
\end{cor}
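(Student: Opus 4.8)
The plan is to derive Corollary \ref{cor:compare} directly from Lemma \ref{ball} by applying the explicit equivalence there twice, once for the pair $(\alpha+\gamma_1,\beta+\gamma_2)$ and once for the pair $(\alpha,\beta)$, and then taking the quotient. Concretely, I would first write, using Lemma \ref{ball} with parameters $\alpha+\gamma_1>-1$ and $\beta+\gamma_2>-1$ (which is legitimate since $\gamma_1,\gamma_2\ge 0$),
\begin{equation*}
\mu_{\alpha+\gamma_1,\beta+\gamma_2}^{+}\big(B(\theta,|\theta-\varphi|)\big)\simeq |\theta-\varphi|\,(\theta+\varphi)^{2(\alpha+\gamma_1)+1}(\pi-\theta+\pi-\varphi)^{2(\beta+\gamma_2)+1},
\end{equation*}
and then Lemma \ref{ball} again with parameters $\alpha,\beta$,
\begin{equation*}
\mu_{\alpha,\beta}^{+}\big(B(\theta,|\theta-\varphi|)\big)\simeq |\theta-\varphi|\,(\theta+\varphi)^{2\alpha+1}(\pi-\theta+\pi-\varphi)^{2\beta+1}.
\end{equation*}

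Next I would simply divide these two equivalences. Since all quantities involved are strictly positive for $\theta,\varphi\in(0,\pi)$, and since $X\simeq Y$ together with $Z\simeq W$ (with $Y,W>0$) gives $X/Z\simeq Y/W$, the factor $|\theta-\varphi|$ and the "base" powers $(\theta+\varphi)^{2\alpha+1}$ and $(\pi-\theta+\pi-\varphi)^{2\beta+1}$ cancel, leaving exactly
\begin{equation*}
\frac{\mu_{\alpha+\gamma_1,\beta+\gamma_2}^{+}\big(B(\theta,|\theta-\varphi|)\big)}{\mu_{\alpha,\beta}^{+}\big(B(\theta,|\theta-\varphi|)\big)}\simeq (\theta+\varphi)^{2\gamma_1}(\pi-\theta+\pi-\varphi)^{2\gamma_2},
\end{equation*}
which rearranges to the claimed statement. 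The uniformity in $\theta,\varphi\in(0,\pi)$ is inherited from the uniformity of the equivalences in Lemma \ref{ball}.

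There is essentially no obstacle here: the only point requiring a word of care is that the implicit constants in Lemma \ref{ball} depend on the type parameters, so applying it with $(\alpha+\gamma_1,\beta+\gamma_2)$ produces constants depending on $\gamma_1,\gamma_2$ as well as $\alpha,\beta$; this is harmless since $\gamma_1,\gamma_2$ are regarded as fixed. One might also note explicitly that dividing two-sided estimates preserves the two-sided estimate because the denominators are bounded away from zero on compact subsets and, more to the point, the whole argument is purely about the stated $\simeq$ relations, which are closed under multiplication and division of positive quantities. Thus the corollary follows immediately.
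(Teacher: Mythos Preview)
Your proof is correct and is exactly the intended argument: the paper states this as an immediate corollary of Lemma \ref{ball} with no separate proof, and your two applications of that lemma followed by taking the quotient is precisely what is meant. The only cosmetic point is that the division step tacitly assumes $\theta\neq\varphi$, but for $\theta=\varphi$ both sides vanish and the equivalence is trivial; alternatively one can multiply rather than divide the two $\simeq$ relations, which avoids this altogether.
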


A convenient method of estimating kernels defined via the Jacobi-Poisson kernel was presented in \cite[Section 4]{NoSjogren}. The key point of this technique is the following result, which establishes a connection between bounds emerging from the representation (\ref{PJker}) and the standard estimates related to the space of homogeneous type $((0,\pi),d\m_{\ab}^{+},|\cdot|)$.
\begin{lem}[{\cite[Lemma 4.3]{NoSjogren}}] \label{bridge}
Let $\alpha,\beta \ge -1\slash 2$. Then
\begin{align*}
\iint \frac{d\Pi_{\alpha}(u)\,d\Pi_{\beta}(v)}{(q(\theta,\varphi,u,v))^{\alpha+\beta+3\slash 2}}
	& \lesssim \frac{1}{\m_{\ab}^{+}(B(\theta,|\theta-\varphi|))}, \qquad \theta,\varphi \in (0,\pi), 
	\quad \theta\neq \varphi,\\
	\iint \frac{d\Pi_{\alpha}(u)\,d\Pi_{\beta}(v)}{(q(\theta,\varphi,u,v))^{\alpha+\beta+2}} & \lesssim
	\frac{1}{|\theta-\varphi|\m_{\ab}^{+}(B(\theta,|\theta-\varphi|))}, 
	\qquad \theta,\varphi \in (0,\pi), \quad \theta\neq \varphi.
\end{align*}
\end{lem}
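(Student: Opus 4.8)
The plan is to replace $q$ by a tractable, fully separated quantity, reduce both estimates to a single model integral, and evaluate that by iterated one‑dimensional integration. First I would note that
\[
q(\theta,\varphi,u,v)=\Big(1-\cos\tfrac{\theta-\varphi}2\Big)+(1-u)\sin\tfrac\theta2\sin\tfrac\varphi2+(1-v)\cos\tfrac\theta2\cos\tfrac\varphi2
\]
exhibits $q$ as a sum of three nonnegative terms; since $1-\cos\tfrac{\theta-\varphi}2\simeq(\theta-\varphi)^2$, $\sin\tfrac\theta2\simeq\theta$ and $\cos\tfrac\theta2\simeq\pi-\theta$ uniformly on $(0,\pi)$, this gives the uniform equivalence
\[
q(\theta,\varphi,u,v)\simeq a+(1-u)\,b+(1-v)\,c,\qquad a:=(\theta-\varphi)^2,\quad b:=\theta\varphi,\quad c:=(\pi-\theta)(\pi-\varphi),
\]
for all $u,v\in[-1,1]$. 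Combining the elementary identities $a+b\simeq(\theta+\varphi)^2$ and $a+c\simeq(2\pi-\theta-\varphi)^2$ (expand and use $x^2-xy+y^2\simeq(x+y)^2$) with Lemma~\ref{ball} gives $\m_{\ab}^{+}(B(\theta,|\theta-\varphi|))\simeq a^{1/2}(a+b)^{\alpha+1/2}(a+c)^{\beta+1/2}$, so both assertions will follow once I show
\[
\iint\frac{d\Pi_\alpha(u)\,d\Pi_\beta(v)}{\big(a+(1-u)b+(1-v)c\big)^{s}}\lesssim\frac{1}{a^{\,s-\alpha-\beta-1}\,(a+b)^{\alpha+1/2}(a+c)^{\beta+1/2}}
\]
for $s=\alpha+\beta+\tfrac32$ and $s=\alpha+\beta+2$ (for these two values $s-\alpha-\beta-1$ equals $\tfrac12$ and $1$).

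Next, for $\alpha,\beta>-1/2$ I would insert $d\Pi_\alpha(u)\simeq(1-u^2)^{\alpha-1/2}\,du$ and split $[-1,1]$ into $[0,1]$ and $[-1,0]$ in each of $u$ and $v$. On the main region $u,v\in[0,1]$, where $1-u^2\simeq1-u$ and $1-v^2\simeq1-v$, the substitution $r=1-u$, $p=1-v$ reduces the contribution to the model integral
\[
J_s=\int_0^1\!\!\int_0^1\frac{r^{\alpha-1/2}\,p^{\beta-1/2}\,dr\,dp}{(a+br+cp)^{s}}.
\]
On each of the other three regions one of $1-u,1-v$ stays comparable to $1$, so the denominator is bounded below by a fixed multiple of $a+b$ or of $a+c$, the degenerate factor (e.g.\ $(1+u)^{\alpha-1/2}$ near $u=-1$) integrates to a finite constant, and a short computation shows these contributions are dominated by the right‑hand side above. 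The endpoint cases $\alpha=-1/2$ and/or $\beta=-1/2$, with $\Pi_{-1/2}=\tfrac12(\eta_{-1}+\eta_{1})$, amount to evaluating the integrand at the two mass points and are covered by the same bounds (or by passing to the weak limit $\alpha\to(-1/2)^{+}$).

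To estimate $J_s$ I would iterate two one‑dimensional integrations, using repeatedly the elementary inequality
\[
\int_0^1\frac{r^{\sigma-1}\,dr}{(X+Yr)^{\sigma+\tau}}\simeq\frac{1}{X^{\tau}\,(X+Y)^{\sigma}},\qquad \sigma,\tau>0,\ X,Y>0,
\]
which follows by splitting the $r$‑range at $r\sim X/Y$. Integrating $J_s$ first in $p$ (with $X=a+br$, $Y=c$, $\sigma=\beta+\tfrac12$) gives $J_s\simeq\int_0^1 r^{\alpha-1/2}(a+br)^{-(s-\beta-1/2)}(a+c+br)^{-(\beta+1/2)}\,dr$; integrating then in $r$ after splitting its range at $r\sim(a+c)/b$ — so that $a+c+br$ is comparable to $a+c$ below that point and to $br$ above it — and applying the displayed inequality on each part yields $J_{\alpha+\beta+3/2}\lesssim a^{-1/2}(a+b)^{-(\alpha+1/2)}(a+c)^{-(\beta+1/2)}$ and $J_{\alpha+\beta+2}\lesssim a^{-1}(a+b)^{-(\alpha+1/2)}(a+c)^{-(\beta+1/2)}$, as required.

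The delicate point, and essentially the only real work, is the bookkeeping in this last $r$‑integration: one must keep $a+b$ and $a+c$ intact as single blocks rather than extracting $((\pi-\theta)(\pi-\varphi))^{\beta+1/2}$ or $(\theta\varphi)^{\alpha+1/2}$, because $(\pi-\theta)(\pi-\varphi)$ is genuinely smaller than $(2\pi-\theta-\varphi)^2$ when $\pi-\theta$ and $\pi-\varphi$ are far apart, and symmetrically on the $\theta\varphi$ side; a crude estimate here loses powers. Carefully tracking which of $a$, $br$, $cp$ dominates in each subregion — both in $J_s$ and in the auxiliary regions near $u,v=-1$ — is what makes the bounds come out sharp; the rest is the kernel‑estimating template of \cite[Section~4]{NoSjogren} applied more or less mechanically.
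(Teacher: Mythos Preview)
The paper itself does not prove this lemma: it is quoted as \cite[Lemma~4.3]{NoSjogren} and used as a black box throughout Section~\ref{sec:ker}. So there is no ``paper's own proof'' to compare against.

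Your argument is a correct self-contained proof. The identity
\[
q(\theta,\varphi,u,v)=\Big(1-\cos\tfrac{\theta-\varphi}{2}\Big)+(1-u)\sin\tfrac{\theta}{2}\sin\tfrac{\varphi}{2}+(1-v)\cos\tfrac{\theta}{2}\cos\tfrac{\varphi}{2}
\]
is easily checked, and the resulting comparison $q\simeq a+(1-u)b+(1-v)c$ is valid on all of $[-1,1]^2$. The rewriting of $\m_{\ab}^{+}(B(\theta,|\theta-\varphi|))$ as $a^{1/2}(a+b)^{\alpha+1/2}(a+c)^{\beta+1/2}$ via Lemma~\ref{ball} and the relations $a+b\simeq(\theta+\varphi)^2$, $a+c\simeq(2\pi-\theta-\varphi)^2$ is correct, and the reduction to the model integral $J_s$ is clean. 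The elementary estimate $\int_0^1 r^{\sigma-1}(X+Yr)^{-\sigma-\tau}\,dr\lesssim X^{-\tau}(X+Y)^{-\sigma}$ for $\sigma,\tau>0$ holds as stated, and the two-stage integration (first in $p$, then in $r$ with the split at $r\sim(a+c)/b$) produces exactly the claimed bounds for both values of $s$; your warning about keeping $a+b$ and $a+c$ as blocks is the right point. The auxiliary regions near $u=-1$ or $v=-1$ and the limiting cases $\alpha=-1/2$ or $\beta=-1/2$ are handled correctly as you describe.
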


For our present considerations we generalize this result as follows.
\begin{lem} \label{L4.3*}
Let $\alpha,\beta\ge-1/2$ and $\gamma_{1},\gamma_{2},\kappa,\kappa_{1},\kappa_{2}\ge0$.
Then
\begin{align*}
&\Big(\sin\frac{\theta}{2}+\sin\frac{\varphi}{2}\Big)^{2\gamma_{1}}\Big(\cos\frac{{\theta}}{2}+\cos\frac{\varphi}{2}\Big)^{2\gamma_{2}}\iint\frac{d\Pi_{\alpha+\gamma_{1}+\kappa+\kappa_{1}}(u)\,d\Pi_{\beta+\gamma_{2}+\kappa+\kappa_{2}}(v)}{q(\theta,\varphi,u,v)^{\alpha+\beta+3/2+\gamma_{1}+\gamma_{2}+\kappa}}\\ 
&\lesssim\frac{1}{\mu_{\alpha,\beta}^{+}(B(\theta,|\theta-\varphi|))},\\
&\Big(\sin\frac{\theta}{2}+\sin\frac{\varphi}{2}\Big)^{2\gamma_{1}}\Big(\cos\frac{{\theta}}{2}+\cos\frac{\varphi}{2}\Big)^{2\gamma_{2}}\iint\frac{d\Pi_{\alpha+\gamma_{1}+\kappa+\kappa_{1}}(u)\,d\Pi_{\beta+\gamma_{2}+\kappa+\kappa_{2}}(v)}{q(\theta,\varphi,u,v)^{\alpha+\beta+2+\gamma_{1}+\gamma_{2}+\kappa}}\\ 
&\lesssim\frac{1}{|\theta-\varphi|\mu_{\alpha,\beta}^{+}(B(\theta,|\theta-\varphi|))},
\end{align*}
uniformly in $\theta,\varphi\in(0,\pi), \theta\neq\varphi.$
\end{lem}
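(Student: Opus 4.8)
The plan is to deduce Lemma \ref{L4.3*} from Lemma \ref{bridge} by absorbing the extra factors and the higher-order measures $\Pi_{\alpha+\gamma_1+\kappa+\kappa_1}$, $\Pi_{\beta+\gamma_2+\kappa+\kappa_2}$ into the $q$-denominator, at the cost of lowering the exponent back to the critical values $\alpha+\beta+3/2$ or $\alpha+\beta+2$ and of passing from $(\alpha,\beta)$ to $(\alpha+\gamma_1,\beta+\gamma_2)$; the final gap between $\mu_{\alpha+\gamma_1,\beta+\gamma_2}^{+}$ and $\mu_{\alpha,\beta}^{+}$ is then closed via Corollary \ref{cor:compare}. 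The two displayed inequalities are proved by exactly the same argument, so I would carry out the first one in detail and remark that the second is identical with $3/2$ replaced by $2$ (which only affects where the $|\theta-\varphi|^{-1}$ comes from in Lemma \ref{bridge}).

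First I would record the two elementary facts that drive everything. One: since $q(\theta,\varphi,u,v)\ge 1-\sin\frac\theta2\sin\frac\varphi2-\cos\frac\theta2\cos\frac\varphi2\ge 0$ and in fact $q \le 2$, we have $q(\theta,\varphi,u,v)^{\gamma_1+\gamma_2+\kappa}\lesssim 1$, so raising the exponent in the denominator by $\gamma_1+\gamma_2+\kappa$ only makes the integrand larger; this will let me replace the exponent $\alpha+\beta+3/2+\gamma_1+\gamma_2+\kappa$ by $(\alpha+\gamma_1)+(\beta+\gamma_2)+3/2$ up to a constant, \emph{provided} I also keep control of the measures. Two: to handle the measures, I would use the pointwise comparison
\[
q(\theta,\varphi,u,v)\gtrsim q(\theta,\varphi,u',v'),\qquad |u'|\le|u|,\ |v'|\le|v|,
\]
which fails in general — so instead I would symmetrize. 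Concretely, write the $\Pi_{\alpha+\gamma_1+\kappa+\kappa_1}$-integral as an integral over $u\in[-1,1]$ and split into $u\ge0$ and $u<0$; by the even symmetry $q(\theta,\varphi,-u,v)=1+u\sin\frac\theta2\sin\frac\varphi2-v\cos\frac\theta2\cos\frac\varphi2$ one does not directly get monotonicity, so the cleaner route is the density comparison: for $a'\ge a\ge -1/2$ one has $d\Pi_{a'}(u)=c_{a,a'}(1-u^2)^{a'-a}\,d\Pi_a(u)$ with a bounded density $\le c_{a,a'}$, hence $d\Pi_{a'}(u)\lesssim d\Pi_a(u)$ as measures on $[-1,1]$ (this includes the limiting case $a=-1/2$, where $\Pi_{-1/2}=\frac12(\eta_{-1}+\eta_1)$ and $d\Pi_{a'}(u)\lesssim d\Pi_{-1/2}(u)$ holds trivially since $(1-u^2)^{a'+1/2}$ vanishes unless $u=\pm1$... which forces me to handle $\alpha=-1/2$, $\gamma_1+\kappa+\kappa_1>0$ separately, see below).

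With these two facts the main chain runs as follows. Starting from the left-hand side of the first inequality, bound $q^{-(\alpha+\beta+3/2+\gamma_1+\gamma_2+\kappa)}\le q^{-((\alpha+\gamma_1)+(\beta+\gamma_2)+3/2)}$ using fact one (valid since $q\le 2$ and the extra exponent $\gamma_1+\gamma_2+\kappa$ is nonnegative — actually I need $q^{-\text{(smaller)}}\le 2^{\gamma_1+\gamma_2+\kappa}q^{-\text{(larger)}}$, i.e. $q^{\gamma_1+\gamma_2+\kappa}\le 2^{\gamma_1+\gamma_2+\kappa}$, which is immediate). Then drop the surplus regularity in the measures, $d\Pi_{\alpha+\gamma_1+\kappa+\kappa_1}(u)\lesssim d\Pi_{\alpha+\gamma_1}(u)$ and likewise in $v$, using fact two. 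Apply Lemma \ref{bridge} with parameters $(\alpha+\gamma_1,\beta+\gamma_2)$ (both are $\ge -1/2$, as required) to get the double integral $\lesssim 1/\mu_{\alpha+\gamma_1,\beta+\gamma_2}^{+}(B(\theta,|\theta-\varphi|))$. Finally, since $\sin\frac\theta2+\sin\frac\varphi2\simeq \theta+\varphi$ and $\cos\frac\theta2+\cos\frac\varphi2\simeq \pi-\theta+\pi-\varphi$ on $(0,\pi)$, the prefactor $(\sin\frac\theta2+\sin\frac\varphi2)^{2\gamma_1}(\cos\frac\theta2+\cos\frac\varphi2)^{2\gamma_2}\simeq(\theta+\varphi)^{2\gamma_1}(\pi-\theta+\pi-\varphi)^{2\gamma_2}$ is, by Corollary \ref{cor:compare}, comparable to $\mu_{\alpha+\gamma_1,\beta+\gamma_2}^{+}(B)/\mu_{\alpha,\beta}^{+}(B)$, so multiplying cancels the shifted measure and leaves $1/\mu_{\alpha,\beta}^{+}(B(\theta,|\theta-\varphi|))$, as claimed. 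The second inequality is identical, using the second estimate of Lemma \ref{bridge} and exponent $(\alpha+\gamma_1)+(\beta+\gamma_2)+2$.

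The one genuinely delicate point — the main obstacle — is the density domination $d\Pi_{a'}\lesssim d\Pi_a$ when $a=-1/2$. If $a=-1/2$ then $\Pi_{-1/2}$ is supported on $\{\pm1\}$, while $\Pi_{a'}$ with $a'>-1/2$ has a density that vanishes at $\pm1$, so the inequality $d\Pi_{a'}\lesssim d\Pi_{-1/2}$ is false. I would deal with this by not dropping all the way down to $-1/2$: instead, note that on the support of $\Pi_{a'}$ one has $(1-u^2)^{a'+1/2}\le 1$, and more usefully that $q(\theta,\varphi,u,v)\ge q(\theta,\varphi,1,v)$ whenever... no — rather, the right move is to observe that for the purpose of invoking Lemma \ref{bridge} we only need \emph{some} pair of parameters $\ge -1/2$, and we are free to keep $\kappa_1,\kappa_2$ worth of smoothing in the measure: write $\alpha+\gamma_1+\kappa+\kappa_1 = (\alpha+\gamma_1) + (\kappa+\kappa_1)$ and, if $\alpha+\gamma_1\ge -1/2$ with $\alpha+\gamma_1 > -1/2$ (which holds automatically unless $\alpha=-1/2$ and $\gamma_1=0$), the density $d\Pi_{\alpha+\gamma_1+\kappa+\kappa_1}/d\Pi_{\alpha+\gamma_1}$ is bounded. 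In the truly exceptional sub-case $\alpha=-1/2$, $\gamma_1=0$: if also $\kappa+\kappa_1=0$ there is nothing to do, and if $\kappa+\kappa_1>0$ then $\alpha+\gamma_1+\kappa+\kappa_1>-1/2$ is a bona fide continuous measure and one checks directly that $q(\theta,\varphi,u,v)\gtrsim q(\theta,\varphi,\operatorname{sign}u,v) = q(-1/2\text{-value})$ is the wrong direction, so instead I would simply absorb the smoothing into the exponent: since $q\le 2$, $d\Pi_{a'}(u)q^{-s}\lesssim d\Pi_{-1/2}(u)\cdot\big[\sup_u (1-u^2)^{a'+1/2}\big]\cdot q^{-s}$ does not work, so the honest fix is to prove a direct inequality $\int (1-u^2)^{c}q(\theta,\varphi,u,v)^{-s}\,du \lesssim \frac12\sum_{u_0=\pm1} q(\theta,\varphi,u_0,v)^{-s}$ for $c\ge 0$, $s\ge 0$, which follows because $q(\theta,\varphi,u,v)$ is affine in $u$ hence $\min_{u\in[-1,1]}q = \min(q(\cdot,1,\cdot),q(\cdot,-1,\cdot))$ and the integral of a negative power of an affine function against a bounded weight on $[-1,1]$ is controlled by its value at the minimizing endpoint up to a constant — this is exactly the kind of one-variable estimate already used implicitly in \cite[Section 4]{NoSjogren}, so I would cite it there. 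Modulo this bookkeeping at the endpoint, the lemma is a short deduction from Lemma \ref{bridge} and Corollary \ref{cor:compare}.
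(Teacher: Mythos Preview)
Your proof has a genuine gap in the handling of the parameter $\kappa$. You write that you will ``bound $q^{-(\alpha+\beta+3/2+\gamma_1+\gamma_2+\kappa)}\le q^{-((\alpha+\gamma_1)+(\beta+\gamma_2)+3/2)}$'' and justify this via $q\le 2$. But the inequality goes the wrong way: since $q$ can be arbitrarily close to $0$ (take $u,v\to 1$ and $\theta\to\varphi$), the factor $q^{-\kappa}$ blows up, and $q^{-(\text{larger exponent})}$ is \emph{not} dominated by $q^{-(\text{smaller exponent})}$. Your parenthetical correction, ``actually I need $q^{-\text{(smaller)}}\le 2^{\gamma_1+\gamma_2+\kappa}q^{-\text{(larger)}}$'', is a true inequality but useless here: the left-hand side of the lemma already has the \emph{larger} exponent in the denominator, so you need the reverse domination, which fails. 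As written, your argument is correct only when $\kappa=0$.

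The paper disposes of $\kappa>0$ by a different mechanism. After establishing the special case $\gamma_1=\gamma_2=\kappa=0$ (which is your ``fact two'' plus Lemma~\ref{bridge}), it applies that special case \emph{twice}, once with parameters $(\alpha+\gamma_1+\kappa,\beta+\gamma_2)$ and once with $(\alpha+\gamma_1,\beta+\gamma_2+\kappa)$; in both cases the exponent $\alpha+\beta+3/2+\gamma_1+\gamma_2+\kappa$ matches exactly, so there is no loss. Via Corollary~\ref{cor:compare} this yields two bounds, one carrying an extra factor $(\theta+\varphi)^{-2\kappa}$ and the other $(\pi-\theta+\pi-\varphi)^{-2\kappa}$. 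Taking the minimum and observing that $(\theta+\varphi)+(\pi-\theta+\pi-\varphi)=2\pi$ forces $\max\big(\theta+\varphi,\,\pi-\theta+\pi-\varphi\big)\ge\pi$, so the smaller of the two extra factors is $\lesssim 1$, and $\kappa$ disappears. This ``minimum'' trick is the missing idea.

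Your treatment of the boundary case $\alpha=-1/2$ is overcomplicated but salvageable. The clean observation (used in the paper) is that $u\mapsto q(\theta,\varphi,u,v)^{-s}$ is increasing on $[-1,1]$ since $\sin\frac\theta2\sin\frac\varphi2\ge0$, hence $\int \phi\,d\Pi_{a'}\le \phi(1)\le 2\int\phi\,d\Pi_{-1/2}$ for any such nondecreasing $\phi$; this gives $\int q^{-s}\,d\Pi_{\alpha+\kappa_1}\lesssim\int q^{-s}\,d\Pi_{\alpha}$ directly, with no need for the affine-minimization detour.
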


\begin{proof}

We shall show the first estimate. Proving the second one is analogous. To begin with, observe that
$$\int \phi(u)\,d\Pi_{\alpha+\kappa_{1}}(u)\lesssim\int \phi(u)\,d\Pi_{\alpha}(u),$$ uniformly in all nonnegative and increasing functions $\phi$ on $[-1,1]$. Indeed, this is clear for $\alpha>-1/2$ since then the density of $\Pi_{\alpha+\kappa_{1}}$ is controlled by that of $\Pi_{\alpha}.$ On the other hand, if $\alpha=-1/2$ then we have $$\int \phi(u)\,d\Pi_{\alpha+\kappa_{1}}(u)\le\phi(1)\le2\int \phi(u)\,d\Pi_{\alpha}(u).$$ 

Applying this observation twice (the second time with $\alpha$ replaced by $\beta$ and $\kappa_{1}$ by $\kappa_{2}$), we see that
\begin{equation*}
\iint\frac{d\Pi_{\alpha+\kappa_{1}}(u)\,d\Pi_{\beta+\kappa_{2}}(v)}{q(\theta,\varphi,u,v)^{\alpha+\beta+3/2}}\lesssim\iint\frac{d\Pi_{\alpha}(u)\,d\Pi_{\beta}(v)}{q(\theta,\varphi,u,v)^{\alpha+\beta+3/2}}.
\end{equation*}
From here the conclusion in the special case when $\gamma_1=\gamma_2=\kappa=0$ follows by Lemma \ref{bridge}. 

Now let $\kappa,\gamma_1,\gamma_2\ge 0$ be arbitrary. 
Using twice the already verified special case (first with $\widetilde{\alpha}=\alpha+\gamma_{1}+\kappa,\widetilde{\beta}=\beta+\gamma_{2},\widetilde{\kappa}_1=\kappa_1,\widetilde{\kappa}_2=\kappa+\kappa_2$ and then with $\widetilde{\alpha}=\alpha+\gamma_{1},\widetilde{\beta}=\beta+\gamma_{2}+\kappa,\widetilde{\kappa}_1=\kappa+\kappa_1,\widetilde{\kappa}_2=\kappa_2$) we get the estimates

\begin{align*}
\iint\frac{d\Pi_{\alpha+\gamma_{1}+\kappa+\kappa_1}(u)\;d\Pi_{\beta+\gamma_{2}+\kappa+\kappa_2}(v)}{q(\theta,\varphi,u,v)^{\alpha+\beta+3/2+\gamma_{1}+\gamma_{2}+\kappa}}
\lesssim\frac{1}{(\theta+\varphi)^{2(\gamma_{1}+\kappa)}(\pi-\theta+\pi-\varphi)^{2\gamma_{2}}\mu_{\alpha,\beta}^{+}(B(\theta,|\theta-\varphi|))}
\end{align*}
and
\begin{align*}
\iint\frac{d\Pi_{\alpha+\gamma_{1}+\kappa+\kappa_1}(u)\;d\Pi_{\beta+\gamma_{2}+\kappa+\kappa_2}(v)}{q(\theta,\varphi,u,v)^{\alpha+\beta+3/2+\gamma_{1}+\gamma_{2}+\kappa}}\lesssim\frac{1}{(\theta+\varphi)^{2\gamma_{1}}(\pi-\theta+\pi-\varphi)^{2(\gamma_{2}+\kappa)}\mu_{\alpha,\beta}^{+}(B(\theta,|\theta-\varphi|))}.
\end{align*}
Taking minimum of the right-hand sides here, we see that
\begin{align*}
\iint\frac{d\Pi_{\alpha+\gamma_{1}+\kappa+\kappa_1}(u)\;d\Pi_{\beta+\gamma_{2}+\kappa+\kappa_2}(v)}{q(\theta,\varphi,u,v)^{\alpha+\beta+3/2+\gamma_{1}+\gamma_{2}+\kappa}}
&\lesssim\frac{1}{(\theta+\varphi)^{2\gamma_{1}}(\pi-\theta+\pi-\varphi)^{2\gamma_{2}}\mu_{\alpha,\beta}^{+}(B(\theta,|\theta-\varphi|))}.
\end{align*}
Since
$\theta\simeq\sin\frac{\theta}{2}$ and $\pi-\theta\simeq\cos\frac{\theta}{2}$ for $\theta\in(0,\pi)$, this readily implies our assertion.
\end{proof}

The next lemma will come into play when proving the smoothness estimates \eqref{sm1} and \eqref{sm2} for the relevant vector-valued kernels. It will allow us to reduce the difference conditions to certain gradient estimates, which are somewhat easier to verify.

\begin{lem}[{\cite[Lemma 4.6]{NoSjogren}}] \label{lem:comp}
For all $\theta,\widetilde{\theta},\varphi \in (0,\pi)$ with 
$|\theta-\varphi|>2|\theta-\widetilde{\theta}|$ and all $u,v \in [-1,1]$,
$$
q(\theta,\varphi,u,v) \simeq q(\widetilde{\theta},\varphi,u,v).
$$
Similarly, for all $\theta,\varphi,\widetilde{\varphi} \in (0,\pi)$ with 
$|\theta-\varphi|>2|\varphi-\widetilde{\varphi}|$ and all $u,v \in [-1,1]$,
$$
q(\theta,\varphi,u,v) \simeq q(\theta,\widetilde{\varphi},u,v).
$$
\end{lem}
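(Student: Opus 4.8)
The plan is to base everything on the elementary identity
$$q(\theta,\varphi,u,v)=\Big(1-\cos\tfrac{\theta-\varphi}{2}\Big)+(1-u)\sin\tfrac{\theta}{2}\sin\tfrac{\varphi}{2}+(1-v)\cos\tfrac{\theta}{2}\cos\tfrac{\varphi}{2},$$
which is immediate from $\cos\frac{\theta-\varphi}{2}=\cos\frac{\theta}{2}\cos\frac{\varphi}{2}+\sin\frac{\theta}{2}\sin\frac{\varphi}{2}$. For $\theta,\varphi\in(0,\pi)$ and $u,v\in[-1,1]$ all three summands are nonnegative, so $q$ is comparable to their sum; since $1-\cos x\simeq x^{2}$ on $[-\pi/2,\pi/2]$ while $\sin\frac{\theta}{2}\simeq\theta$ and $\cos\frac{\theta}{2}\simeq\pi-\theta$ on $(0,\pi)$, this gives the two-sided bound
$$q(\theta,\varphi,u,v)\simeq(\theta-\varphi)^{2}+(1-u)\,\theta\varphi+(1-v)(\pi-\theta)(\pi-\varphi),$$
uniformly in $u,v$. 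Since $q$ is symmetric in $\theta$ and $\varphi$, the second comparison in the lemma follows from the first, so I would treat only $q(\theta,\varphi,u,v)$ versus $q(\widetilde{\theta},\varphi,u,v)$ under the assumption $|\theta-\varphi|>2|\theta-\widetilde{\theta}|$.

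First I would note that this assumption forces $\frac12|\theta-\varphi|<|\widetilde{\theta}-\varphi|<\frac32|\theta-\varphi|$, hence $(\theta-\varphi)^{2}\simeq(\widetilde{\theta}-\varphi)^{2}$, which settles the first summand. It then remains to prove $(1-u)\theta\varphi\lesssim q(\widetilde{\theta},\varphi,u,v)$, the analogous bound for the term $(1-v)(\pi-\theta)(\pi-\varphi)$, and the two reverse inequalities; the reverse ones are obtained by interchanging $\theta$ and $\widetilde{\theta}$, which is legitimate because $|\widetilde{\theta}-\varphi|\simeq|\theta-\varphi|\gtrsim|\theta-\widetilde{\theta}|$. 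Using $\theta\le\widetilde{\theta}+|\theta-\widetilde{\theta}|$ and $1-u\ge0$ we split $(1-u)\theta\varphi\le(1-u)\widetilde{\theta}\varphi+(1-u)|\theta-\widetilde{\theta}|\varphi$; the first term is already dominated by $q(\widetilde{\theta},\varphi,u,v)$, so the whole matter reduces to showing $(1-u)|\theta-\widetilde{\theta}|\varphi\lesssim q(\widetilde{\theta},\varphi,u,v)$.

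This last estimate is the only real obstacle: a crude Lipschitz bound on $q$ in the variable $\theta$ is not enough, and the presence of the factor $1-u$ must be exploited through a dichotomy on the size of $\widetilde{\theta}$. If $\widetilde{\theta}\ge\frac12|\theta-\varphi|$, then $(1-u)|\theta-\widetilde{\theta}|\varphi<(1-u)\tfrac12|\theta-\varphi|\varphi\le(1-u)\widetilde{\theta}\varphi\lesssim q(\widetilde{\theta},\varphi,u,v)$. If instead $\widetilde{\theta}<\frac12|\theta-\varphi|$, then, combined with $|\widetilde{\theta}-\varphi|\simeq|\theta-\varphi|$, this forces $\varphi\simeq|\theta-\varphi|$, whence $(1-u)|\theta-\widetilde{\theta}|\varphi\lesssim|\theta-\varphi|^{2}\simeq(\widetilde{\theta}-\varphi)^{2}\lesssim q(\widetilde{\theta},\varphi,u,v)$. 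A parallel dichotomy on $\pi-\widetilde{\theta}$, using that $\pi-\widetilde{\theta}<\frac12|\theta-\varphi|$ forces $\pi-\varphi\simeq|\theta-\varphi|$, disposes of the $(\pi-\theta)(\pi-\varphi)$-term in the same way. Collecting the three estimates yields $q(\theta,\varphi,u,v)\lesssim q(\widetilde{\theta},\varphi,u,v)$; the reverse inequality follows by repeating the argument with the roles of $\theta$ and $\widetilde{\theta}$ interchanged, and the comparison in the $\varphi$ variable is then immediate from the symmetry of $q$ in its first two arguments.
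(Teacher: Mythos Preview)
Your argument is correct. The decomposition
\[
q(\theta,\varphi,u,v)=\Big(1-\cos\tfrac{\theta-\varphi}{2}\Big)+(1-u)\sin\tfrac{\theta}{2}\sin\tfrac{\varphi}{2}+(1-v)\cos\tfrac{\theta}{2}\cos\tfrac{\varphi}{2}
\]
into three nonnegative pieces, together with the dichotomy on the size of $\widetilde{\theta}$ (and of $\pi-\widetilde{\theta}$) relative to $|\theta-\varphi|$, handles the comparison cleanly. One small point worth tightening: when you say the reverse inequality follows by ``interchanging $\theta$ and $\widetilde{\theta}$'', the swapped hypothesis is only $|\widetilde{\theta}-\varphi|>|\theta-\widetilde{\theta}|$ rather than $>2|\theta-\widetilde{\theta}|$. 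This is harmless, since your dichotomy argument only uses $|\theta-\widetilde{\theta}|<\tfrac12|\theta-\varphi|$ and the already-established relation $|\widetilde{\theta}-\varphi|\simeq|\theta-\varphi|$; alternatively, one can run the dichotomy directly on $\theta$ (rather than $\widetilde{\theta}$) for the reverse bound and avoid any swapping.

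As for comparison with the paper: the paper does not prove this lemma at all---it is quoted verbatim from \cite[Lemma~4.6]{NoSjogren} and used as a black box. Your self-contained argument is therefore a genuine addition rather than a reproduction of anything in the present paper.
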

We will also frequently make use of the bounds stated below.
\begin{lem}[{\cite[Lemma 4.5]{NoSjogren}}] \label{trig}
For all $\theta,\varphi \in (0,\pi)$ and $u,v \in [-1,1]$, one has
$$
\big| \partial_{\theta} q(\theta,\varphi,u,v) \big| \lesssim \sqrt{q(\theta,\varphi,u,v)} 
\qquad \textrm{and} \qquad
\big| \partial_{\varphi} q(\theta,\varphi,u,v) \big| \lesssim \sqrt{q(\theta,\varphi,u,v)}.
$$
\end{lem}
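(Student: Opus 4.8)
The plan is to prove both inequalities by a short computation combined with an orthogonality (Cauchy--Schwarz) argument that exposes the quadratic structure hidden in $q$. Since $q(\theta,\varphi,u,v)=q(\varphi,\theta,u,v)$, it suffices to bound $\partial_\theta q$; the estimate for $\partial_\varphi q$ then follows by interchanging the roles of $\theta$ and $\varphi$. First I would recast $q$ in vector notation: set
$$
P=P(\theta)=\Big(\sin\tfrac{\theta}2,\,\cos\tfrac{\theta}2\Big),\qquad
Q=Q(\varphi,u,v)=\Big(u\sin\tfrac{\varphi}2,\,v\cos\tfrac{\varphi}2\Big),
$$
so that $q(\theta,\varphi,u,v)=1-P\cdot Q$. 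Here $|P|=1$, while $|Q|^{2}=u^{2}\sin^{2}\tfrac{\varphi}2+v^{2}\cos^{2}\tfrac{\varphi}2\le\sin^{2}\tfrac{\varphi}2+\cos^{2}\tfrac{\varphi}2=1$ because $|u|,|v|\le1$; in particular $P\cdot Q\le|P|\,|Q|\le1$, so $q\ge0$ and $\sqrt q$ is meaningful. Differentiating, $\partial_\theta P=\tfrac12 P^{\perp}$ with $P^{\perp}=\big(\cos\tfrac{\theta}2,-\sin\tfrac{\theta}2\big)$ the unit vector orthogonal to $P$, and therefore $\partial_\theta q=-\tfrac12\,P^{\perp}\!\cdot Q$.

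Since $\{P,P^{\perp}\}$ is an orthonormal basis of $\R^{2}$, we have $(P\cdot Q)^{2}+(P^{\perp}\!\cdot Q)^{2}=|Q|^{2}\le1$, whence
$$
\big(P^{\perp}\!\cdot Q\big)^{2}\le 1-(P\cdot Q)^{2}=(1-P\cdot Q)(1+P\cdot Q)\le 2(1-P\cdot Q)=2\,q(\theta,\varphi,u,v),
$$
using once more $P\cdot Q\le1$. Thus $|\partial_\theta q|=\tfrac12|P^{\perp}\!\cdot Q|\le\tfrac1{\sqrt2}\sqrt{q(\theta,\varphi,u,v)}$, and by the symmetry of $q$ the same bound (with the same constant) holds for $|\partial_\varphi q|$. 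This is a soft argument with no real obstacle; the only points deserving a line of care are the nonnegativity of $q$ — needed so that the right-hand sides of the asserted inequalities make sense — and the elementary identity $\partial_\theta P=\tfrac12 P^{\perp}$, both of which are immediate from $|u|,|v|\le1$ and $\theta,\varphi\in(0,\pi)$. One could equally run the estimate coordinatewise, starting from $\partial_\theta q=\tfrac12\big(v\sin\tfrac{\theta}2\cos\tfrac{\varphi}2-u\cos\tfrac{\theta}2\sin\tfrac{\varphi}2\big)$, but the vector formulation makes the cancellation transparent.
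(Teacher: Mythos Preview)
Your argument is correct and even yields the explicit constant $\tfrac{1}{\sqrt{2}}$. The vectorial reformulation $q=1-P\cdot Q$ with $|P|=1$, $|Q|\le 1$ and $\partial_\theta P=\tfrac12 P^{\perp}$ is clean, and the key step $(P^{\perp}\!\cdot Q)^{2}\le 1-(P\cdot Q)^{2}\le 2q$ is a neat use of the orthonormal decomposition $|Q|^{2}=(P\cdot Q)^{2}+(P^{\perp}\!\cdot Q)^{2}$. The nonnegativity of $q$ and of $1+P\cdot Q$ both follow from $|P\cdot Q|\le|P|\,|Q|\le 1$, so every square root and factorization you use is legitimate.

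As for comparison: the present paper does not supply its own proof of this lemma but merely quotes it from \cite[Lemma~4.5]{NoSjogren}, so there is no argument here to set yours against. Your proof is self-contained and would serve perfectly well as an independent verification of the cited result.
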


\begin{lem}\label{estimates}
Let $\theta, \varphi\in(0,\pi).$ Then
\begin{itemize}
\item[(a)]$$\frac{|\theta-\varphi|\varphi(\pi-\varphi)}{(\theta+\varphi)^{2}(\pi-\theta+\pi-\varphi)^{2}}\le1;$$
\item[(b)]$$\frac{\widetilde{\theta}\,\varphi \,(\pi-\widetilde{\theta})(\pi-\varphi)}{(\theta+\varphi)^{2}(\pi-\theta+\pi-\varphi)^{2}}\le1$$
provided that $2|\theta-\widetilde{\theta}|\le|\theta-\varphi|$.
\end{itemize}
These estimates remain valid after switching the roles of $\theta$ and $\varphi$.

\end{lem}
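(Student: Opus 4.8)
The plan is to prove both inequalities by completely elementary estimation, the whole point being to distribute the factors in the numerator among those in the denominator in the right way; nothing beyond basic arithmetic is needed. I will use repeatedly that $0<\theta,\varphi<\pi$ forces $|\theta-\varphi|\le\theta+\varphi$ and also $|\theta-\varphi|=|(\pi-\varphi)-(\pi-\theta)|\le(\pi-\theta)+(\pi-\varphi)=2\pi-\theta-\varphi$, together with the crude bounds $\varphi\le\theta+\varphi$ and $\pi-\varphi\le 2\pi-\theta-\varphi$. For part (a) the crucial observation is that
\[
|\theta-\varphi|\le(\theta+\varphi)(2\pi-\theta-\varphi),\qquad\theta,\varphi\in(0,\pi),
\]
which holds because the larger of the two quantities $\theta+\varphi$ and $2\pi-\theta-\varphi$ is at least their average $\pi>1$, so multiplying the smaller of them---which already dominates $|\theta-\varphi|$---by that larger factor cannot decrease it. Combining this with $\varphi\le\theta+\varphi$ and $\pi-\varphi\le 2\pi-\theta-\varphi$ gives
\[
|\theta-\varphi|\,\varphi\,(\pi-\varphi)\le(\theta+\varphi)(2\pi-\theta-\varphi)\cdot(\theta+\varphi)\cdot(2\pi-\theta-\varphi)=(\theta+\varphi)^2(2\pi-\theta-\varphi)^2,
\]
which is (a); the identical chain of inequalities works after interchanging $\theta$ and $\varphi$.

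For part (b) I would split the desired bound into the two separate inequalities
\[
\widetilde\theta\,\varphi\le(\theta+\varphi)^2\qquad\text{and}\qquad(\pi-\widetilde\theta)(\pi-\varphi)\le(2\pi-\theta-\varphi)^2,
\]
whose product is exactly the numerator divided by the denominator in (b). The hypothesis $2|\theta-\widetilde\theta|\le|\theta-\varphi|$ gives $\widetilde\theta\le\theta+\tfrac12|\theta-\varphi|$. If $\theta\le 3\varphi$, this yields $\widetilde\theta\le\theta+\varphi$ (checked directly in the subcases $\theta\le\varphi$ and $\varphi<\theta\le 3\varphi$), hence $\widetilde\theta\,\varphi\le(\theta+\varphi)\varphi\le(\theta+\varphi)^2$. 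If $\theta>3\varphi$, then $\widetilde\theta\le\tfrac{3\theta-\varphi}{2}$ and $\varphi<\theta/3$, so $\widetilde\theta\,\varphi<\tfrac{\theta^2}{2}<(\theta+\varphi)^2$. For the second inequality I run the same argument with $\theta,\varphi,\widetilde\theta$ replaced by $\pi-\theta,\pi-\varphi,\pi-\widetilde\theta$; this is legitimate because $|(\pi-\theta)-(\pi-\widetilde\theta)|=|\theta-\widetilde\theta|$ and $|(\pi-\theta)-(\pi-\varphi)|=|\theta-\varphi|$, so the hypothesis is preserved. Multiplying the two inequalities proves (b), and interchanging $\theta$ with $\varphi$ (and $\widetilde\theta$ with $\widetilde\varphi$) gives the stated variant.

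The only slightly delicate point is the case $\theta>3\varphi$ in (b): there the naive estimate $\widetilde\theta\le\theta+\varphi$ genuinely fails, since $\widetilde\theta$ may be displaced from $\theta$ by as much as $\tfrac12|\theta-\varphi|$, which is now comparatively large. The saving fact is that in precisely this regime $\varphi$ is small---no larger than $\theta/3$---so the product $\widetilde\theta\,\varphi$ stays bounded by $\theta^2\le(\theta+\varphi)^2$. Everything else is routine bookkeeping with the handful of elementary inequalities recorded above.
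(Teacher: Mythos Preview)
Your argument is correct. The paper itself does not give a proof at all---it records only ``Simple exercise''---so your write-up actually supplies what the paper omits. The key observations you use (that $|\theta-\varphi|\le\min(\theta+\varphi,\,2\pi-\theta-\varphi)$ and that $\max(\theta+\varphi,\,2\pi-\theta-\varphi)\ge\pi>1$ for part (a), and the case split $\theta\le3\varphi$ versus $\theta>3\varphi$ for part (b)) are precisely the kind of elementary manipulations the author had in mind; your handling of the displaced point $\widetilde\theta$ in (b), including the observation that the naive bound $\widetilde\theta\le\theta+\varphi$ can fail when $\theta>3\varphi$ but is then rescued by the smallness of $\varphi$, is clean and complete.
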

\begin{proof}
Simple exercise.
\end{proof}
Now we are in a position to prove Theorem \ref{thm:stand}. For the sake of brevity, we shall often use the shortened notation $$\q:= q(\theta,\varphi,u,v).$$ However, we shall never neglect the arguments if they are different from the above or their order is different. When proving Theorem \ref{thm:stand} we tacitly assume that changing orders of certain differentiations and integrations is legitimate. In fact, such manipulations can be easily justified with the aid of the estimates obtained along the proof of Theorem \ref{thm:stand} and the dominated convergence theorem.

To begin with, we consider the kernels of the maximal operators $({H}_{*}^{\alpha,\beta})^{+}$ and $(\widetilde{H}_{*}^{\alpha,\beta})^{+}$ and observe that the case of $({H}_{*}^{\alpha,\beta})^{+}$ was already treated in \cite{NoSjogren}. It remains to analyze the kernel 
$\{\widetilde{H}_t^{\ab}(\theta,\varphi)\}_{t>0}$.

\begin{proof}[Proof of Theorem \ref{thm:stand}; the case of $\{\widetilde{H}_t^{\ab}(\theta,\varphi)\}_{t>0}$]
We first deal with the growth condition \eqref{gr} specified to $\mathbb{B}=\mathbb{X}\subset L^{\infty}$. 
As mentioned above, in \cite[Section 4]{NoSjogren} the vector-valued kernel $\{H{}_{t}^{\alpha,\beta}(\theta,\varphi)\}_{t>0}$ was shown to
satisfy \eqref{gr}. Thus 
\begin{align*}
\big\|{\widetilde{H}}{}_{t}^{\alpha,\beta}(\theta,\varphi)\big\|_{L^{\infty}(dt)}=\frac{\sin\theta\,\sin\varphi}{4}\,\big\|H{}_{t}^{\alpha+1,\beta+1}(\theta,\varphi)\big\|_{L^{\infty}(dt)}\lesssim\frac{\sin\theta\,\sin\varphi}{\mu_{\alpha+1,\beta+1}^{+}(B(\theta,|\theta-\varphi|))}.
\end{align*} 
Since 
\begin{align*}
\sin\theta\,\sin\varphi\simeq\theta\varphi(\pi-\theta)(\pi-\varphi)\le(\theta+\varphi)^{2}(\pi-\theta+\pi-\varphi)^{2},
\end{align*}
from Corollary \ref{cor:compare} (taken with $\gamma_{1}=\gamma_{2}=1$) it follows that
\begin{align*}
\big\|\widetilde{H}{}_{t}^{\alpha,\beta}(\theta,\varphi)\big\|_{L^{\infty}(dt)}\lesssim\frac{1}{\mu_{\alpha,\beta}^{+}(B(\theta,|\theta-\varphi|))}.
\end{align*} 

We pass to proving the smoothness conditions. For symmetry reasons it is enough to show (\ref{sm1}). Let $\theta,\theta',\varphi$ be such that $2|\theta-\theta'|<|\theta-\varphi|$.
By the Mean Value Theorem, we have
\begin{align*} 
\big|\widetilde{H}{}_{t}^{\alpha,\beta}(\theta,\varphi)-\widetilde{H}{}_{t}^{\alpha,\beta}(\theta',\varphi)\big|=|\theta-\theta'|\big|\partial_{\theta}\widetilde{H}{}_{t}^{\alpha,\beta}(\widetilde{\theta},\varphi)\big|,
\end{align*}
where $\widetilde{\theta}$ is a convex combination of $\theta$ and $\theta'$ (notice that $\widetilde{\theta}$ may depend also on $t$ and $\varphi$). Therefore
\begin{align*} \big|\widetilde{H}{}_{t}^{\alpha,\beta}(\theta,\varphi)-\widetilde{H}{}_{t}^{\alpha,\beta}(\theta',\varphi)\big|&\le|\theta-\theta'|\,|\cos\widetilde{\theta}|\,\sin\varphi\,H{}_{t}^{\alpha+1,\beta+1}(\widetilde{\theta},\varphi)\\ &\quad+|\theta-\theta'|\sin\widetilde{\theta}\,\sin\varphi\,\big|\partial_{\theta}H{}_{t}^{\alpha+1,\beta+1}(\widetilde{\theta},\varphi)\big|\\
&\equiv |\theta-\theta'|(S_{1}+S_{2}).
\end{align*}
Our task will be done once we show that 
\begin{equation*}
S_{i}\lesssim\frac{1}{|\theta-\varphi|\mu_{\alpha,\beta}^{+}(B(\theta,|\theta-\varphi|))},\qquad  i=1,2,\\
\end{equation*}
uniformly in $t>0$ and $\theta, \varphi\in(0,\pi)$ such that $2|\theta-\theta'|<|\theta-\varphi|.$

Using \eqref{PJker} and Lemma \ref{lem:comp}, we see that
\begin{align*}
S_{1}&\lesssim\sin\varphi\,\sinh\frac{t}2 \iint \frac{d\Pi_{\alpha+1}(u)\,d\Pi_{\beta+1}(v)} {(\cosh\frac{t}2-1 + q(\widetilde{\theta},\varphi,u,v))^{\alpha+\beta+4}}\\ 
&\simeq\sin\varphi\,H{}_{t}^{\alpha+1,\beta+1}(\theta,\varphi).
\end{align*}
Now, applying (\ref{gr}) for the kernel $\{H{}_{t}^{\alpha+1,\beta+1}(\theta,\varphi)\}_{t>0}$ and then using Corollary \ref{cor:compare} and the relation $\sin\varphi\simeq\varphi(\pi-\varphi),\;\varphi\in(0,\pi)$, we get
\begin{align*}
S_{1}&\lesssim\frac{\sin\varphi}{\mu_{\alpha+1,\beta+1}^{+}(B(\theta,|\theta-\varphi|))}\simeq\frac{\varphi(\pi-\varphi)}{(\theta+\varphi)^{2}(\pi-\theta+\pi-\varphi)^{2}}\frac{1}{\mu_{\alpha,\beta}^{+}(B(\theta,|\theta-\varphi|))}.
\end{align*}
This, in view of Lemma \ref{estimates} (a), implies
\begin{equation*}
S_{1}\lesssim\frac{1}{|\theta-\varphi|\mu_{\alpha,\beta}^{+}(B(\theta,|\theta-\varphi|))}.
\end{equation*}

Considering $S_{2}$, taking into account \eqref{PJker}, Lemma \ref{trig} and Lemma \ref{lem:comp}, we have
\begin{align*}
S_{2}&\lesssim\sin\widetilde{\theta}\,\sin\varphi\,\sinh\frac{t}{2} \iint\frac{|\partial_{\theta}q(\widetilde{\theta},\varphi,u,v)|}{(\cosh\frac{t}2-1+q(\widetilde{\theta},\varphi,u,v))^{\alpha+\beta+5}}\,{d\Pi_{\alpha+1}(u)\,d\Pi_{\beta+1}(v)} \\
&\lesssim\sin\widetilde{\theta}\,\sin\varphi\,\sinh\frac{t}{2} \iint  \frac{d\Pi_{\alpha+1}(u)\,d\Pi_{\beta+1}(v)}{(\cosh\frac{t}{2}-1+q(\widetilde{\theta},\varphi,u,v))^{\alpha+\beta+9/2}}\\
&\lesssim\sin\widetilde{\theta}\,\sin\varphi\,\sinh\frac{t}{2}\iint  \frac{d\Pi_{\alpha+1}(u)\,d\Pi_{\beta+1}(v)}{(\cosh\frac{t}{2}-1+\q)^{\alpha+\beta+9/2}}
\end{align*}
for $2|\theta-\theta'|<|\theta-\varphi|.$ Observe that 
\begin{equation}\label{asympt}
\frac{\sinh\frac{t}{2}}{(\cosh\frac{t}{2}-1+\q)^{\alpha+\beta+9/2}}\lesssim\frac{1}{\q^{\alpha+\beta+4}},
\end{equation}
 uniformly in $\q$ and $t>0$. For $t$ small this follows by the asymptotics 
$\cosh\frac{t}2-1\simeq t^2$ and $\sinh\frac{t}2\simeq t$, and for large $t$ we use the asymptotics $\tanh\frac{t}2\simeq 1$ and boundedness of the quantity $\q$. Combining \eqref{asympt} with the last estimate of $S_{2}$ and Lemma \ref{bridge} leads to the bound
$$S_{2}\lesssim\frac{\sin\widetilde{\theta}\,\sin\varphi}{|\theta-\varphi|\mu_{\alpha+1,\,\beta+1}^{+}(B(\theta,|\theta-\varphi|))}.$$
Now, using Corollary \ref{cor:compare} we infer that
\begin{align*}
S_{2}\lesssim\frac{\widetilde{\theta}\,\varphi(\pi-\widetilde{\theta})(\pi-\varphi)}{(\theta+\varphi)^{2}(\pi-\theta+\pi-\varphi)^{2}}\,\frac{1}{|\theta-\varphi|\mu_{\alpha,\beta}^{+}(B(\theta,|\theta-\varphi|))}.
\end{align*}
Finally, since $2|\theta-\widetilde{\theta}|<|\theta-\varphi|,$ Lemma \ref{estimates} (b) gives the desired estimate of $S_2$. This finishes proving the case of $\{\widetilde{H}_t^{\ab}(\theta,\varphi)\}_{t>0}$ in Theorem \ref{thm:stand}.
\end{proof}

The next kernels to be considered are those of Riesz-Jacobi type transforms of arbitrary order $N\ge 1$.
To proceed we will need a technical result from \cite{NoSjogren}. Actually, the lemma below could have been used earlier in the proof of Theorem \ref{thm:stand}. However, for the sake of better understanding, we decided to postpone it a bit to let the reader intrinsically trace all the details. Denote
$$
\Psi^{\ab}(t,\q) = \frac{\sinh\frac{t}2}{(\cosh\frac{t}2-1+\q)^{\alpha+\beta+2}}.
$$
Notice that in view of (\ref{PJker}), this expression integrated against
$d\Pi_{\alpha}(u)\, d\Pi_{\beta}(v)$ gives, up to a constant factor, the kernel $H_t^{\ab}(\theta,\varphi)$.

\begin{lem}[{\cite[Lemma 4.8]{NoSjogren}}]  \label{4.8}
Let $\alpha,\beta \ge -1\slash 2$ and $m,n\ge 0$ be given. Then

\begin{align*}
\big|\partial_{\theta}^n \partial_t^{m} \Psi^{\ab}(t,\q)\big|\lesssim
\begin{cases} 
(t^2+\q)^{-\alpha-\beta-3\slash 2-(m+n)/2}, & t \le 1 \\
\exp(-\zeta t), & t > 1
\end{cases},
\end{align*}
where $\zeta=\zeta(\alpha,\beta)>0$ unless $m=n=0$ and $\alpha+\beta=-1;$ in the latter case $\zeta=0.$ Moreover,
\begin{equation*} 
\big|\partial_{\varphi}\partial_{\theta}^n \partial_t^{m} \Psi^{\ab}(t,\q)\big| \lesssim
\begin{cases}
(t^2+\q)^{-\alpha-\beta- 2-(m+n)/2}, & t \le 1 \\
\exp(-t/4), & t > 1
\end{cases}.
\end{equation*}
\end{lem}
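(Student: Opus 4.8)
The plan is to reduce everything to the algebraic form of $\Psi^{\ab}$. Write $s=\alpha+\beta+2$ and $\varrho=\cosh\frac t2-1+\q$, so that $\Psi^{\ab}(t,\q)=\sinh\frac t2\cdot\varrho^{-s}$ and $\varrho\ge\q$. From the explicit form of $q$ one checks at once that $\partial_\theta^2\q=\frac14(1-\q)$, $\partial_\varphi^2\q=\frac14(1-\q)$, and more generally that every $\partial_\theta^a\partial_\varphi^b\q$ with $a+b\ge2$ is a bounded trigonometric expression; together with Lemma~\ref{trig} this gives $|\partial_\theta\q|+|\partial_\varphi\q|\lesssim\sqrt\q$, $|\partial_\theta^a\partial_\varphi^b\q|\lesssim1$ for $a+b\ge2$, and $0\le\q\le2$. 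The first real step is an auxiliary estimate: for every $\sigma>0$ and $0<t\le1$,
\[
\bigl|\partial_\theta^n\partial_t^m\varrho^{-\sigma}\bigr|\lesssim\varrho^{-\sigma-(m+n)/2},
\]
uniformly. Since $\varrho$ splits as a function of $t$ alone plus a function of $\theta$ (and of $u,v,\varphi$) alone, its mixed $\theta$--$t$ partials vanish, and Fa\`a di Bruno's formula writes $\partial_\theta^n\partial_t^m\varrho^{-\sigma}$ as a finite sum of terms $c\,\varrho^{-\sigma-p-k}B_{n,p}B_{m,k}$, where $B_{n,p}$ is a Bell-type monomial of weight $n$ and degree $p$ in $\{\partial_\theta^a\varrho:a\ge1\}$ and $B_{m,k}$ one of weight $m$ and degree $k$ in $\{\partial_t^b\varrho:b\ge1\}$. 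Using $|\partial_\theta\varrho|=|\partial_\theta\q|\lesssim\sqrt\varrho$, $|\partial_t\varrho|=\frac12\sinh\frac t2\lesssim\sqrt\varrho$ (valid for $t\le1$ since $\cosh\frac t2-1\simeq t^2$), $|\partial_\theta^a\varrho|\lesssim1$ for $a\ge2$, $|\partial_t^b\varrho|\lesssim1$ for $b\ge2$ (here $t\le1$), boundedness of $\varrho$ for $t\le1$, and the elementary count that a monomial of weight $n$ and degree $p$ has at least $2p-n$ first-order factors, one gets $|B_{n,p}|\lesssim\varrho^{p-n/2}$ and $|B_{m,k}|\lesssim\varrho^{k-m/2}$; multiplying yields the claim.

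Granting this, the $t\le1$ bounds follow by a Leibniz expansion in $t$: $\partial_\theta^n\partial_t^m\Psi^{\ab}=\sum_{l=0}^m\binom ml(\partial_t^l\sinh\frac t2)\,\partial_\theta^n\partial_t^{m-l}\varrho^{-s}$, and since $|\partial_t^0\sinh\frac t2|=\sinh\frac t2\lesssim\sqrt\varrho$ while $|\partial_t^l\sinh\frac t2|\lesssim1$ for $l\ge1$ (with $\varrho$ bounded), the auxiliary estimate with $\sigma=s$ gives $|\partial_\theta^n\partial_t^m\Psi^{\ab}|\lesssim\varrho^{-s+1/2-(m+n)/2}$. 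As $\varrho\simeq t^2+\q$ for $t\le1$ and $-s+\frac12=-\alpha-\beta-\frac32$, this is the first asserted bound. For the $\partial_\varphi$-estimate I would write $\partial_\varphi\Psi^{\ab}=-s(\partial_\varphi\q)\sinh\frac t2\,\varrho^{-s-1}$ and distribute $\partial_\theta^n\partial_t^m$; since $\partial_\varphi\q$ is independent of $t$, each term is $(\partial_\theta^l\partial_\varphi\q)$ times $\partial_\theta^{n-l}\partial_t^m(\sinh\frac t2\,\varrho^{-s-1})$, the latter controlled as above with $s$ replaced by $s+1$. Using $|\partial_\varphi\q|\lesssim\sqrt\varrho$ when $l=0$ and $|\partial_\theta^l\partial_\varphi\q|\lesssim1$ when $l\ge1$, one gains exactly one extra half-power of $\varrho$ and arrives at $\varrho^{-s-(m+n)/2}\simeq(t^2+\q)^{-\alpha-\beta-2-(m+n)/2}$.

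For $t>1$ I would substitute $u=e^{-t/2}\in(0,e^{-1/2}]$. Then $\sinh\frac t2=\frac{1-u^2}{2u}$ and $\cosh\frac t2-1=\frac{(1-u)^2}{2u}$, hence $\varrho=\frac{D}{2u}$ with $D=D(u,\q)=(1-u)^2+2\q u$, and crucially $D\simeq1$ uniformly for $u\in[0,e^{-1/2}]$ and $\q\in[0,2]$; thus $\Psi^{\ab}=(1-u^2)(2u)^{s-1}D^{-s}$, a smooth bounded function of $(u,\q)$ apart from the factor $u^{s-1}$. Moreover $\partial_t=-\frac12 u\partial_u$, so $\partial_t^m=\sum_j c_ju^j\partial_u^j$ with $j\le m$ (and $j\ge1$ once $m\ge1$), while each $\partial_\theta$ applied to $\varrho^{-s}$ produces, via the Fa\`a di Bruno expansion above, at least one factor $\partial_\theta\q$ and one extra power $\varrho^{-1}=\frac{2u}{D}$, i.e.\ at least one extra power of $u$. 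Since $u^j\partial_u^j$ applied to $u^{\tau}\cdot(\text{smooth, bounded in }u)$ is $\lesssim u^{\tau}$ for $u<1$, this bookkeeping gives $|\partial_\theta^n\partial_t^m\Psi^{\ab}|\lesssim u^{s-1}$ when $m=n=0$, $\lesssim u^{s}$ when $n\ge1$, and $\lesssim u^{\max\{s-1,1\}}$ when $n=0,m\ge1$ (the value $1$ occurring precisely when $\alpha+\beta=-1$, i.e.\ $s=1$, where $\partial_u\Psi^{\ab}$ is bounded). In every case the exponent of $u=e^{-t/2}$ is positive, so $|\partial_\theta^n\partial_t^m\Psi^{\ab}|\lesssim e^{-\zeta t}$ with $\zeta=\zeta(\alpha,\beta)>0$, \emph{except} when $m=n=0$ and $\alpha+\beta=-1$, where $s-1=0$ and one obtains only boundedness, exactly as stated. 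Applying the same substitution to $\partial_\varphi\Psi^{\ab}=-s(\partial_\varphi\q)(1-u^2)(2u)^{s}D^{-s-1}$ yields $|\partial_\varphi\partial_\theta^n\partial_t^m\Psi^{\ab}|\lesssim u^{s}=e^{-ts/2}\lesssim e^{-t/4}$, since $s=\alpha+\beta+2\ge1$.

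The step I expect to be most delicate is the $t>1$ analysis in the borderline case $\alpha+\beta=-1$: a naive term-by-term bound on $\partial_t^m\Psi^{\ab}$ (estimating $\sinh\frac t2,\cosh\frac t2\lesssim e^{t/2}$ and $\varrho^{-s}\lesssim e^{-ts/2}$) gives only boundedness and misses genuine cancellations, so that the required decay $e^{-\zeta t}$ for $m+n\ge1$ seems out of reach. The substitution $u=e^{-t/2}$, which rewrites $\Psi^{\ab}$ as $(1-u^2)(2u)^{s-1}D^{-s}$ with $D$ bounded away from $0$, is what makes these cancellations transparent and turns $t$-differentiation into the benign Euler operator $-\frac12 u\partial_u$; the accompanying power-of-$u$ bookkeeping, and likewise the combinatorial bookkeeping in the Fa\`a di Bruno estimate of Step 1, are routine once the algebraic set-up is in place.
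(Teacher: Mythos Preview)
The paper does not supply its own proof of this lemma: it is quoted verbatim from \cite[Lemma~4.8]{NoSjogren} and used as a black box, so there is no in-paper argument to compare your proposal against.

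Your argument is essentially correct and self-contained. The Fa\`a di Bruno/Leibniz bookkeeping for $t\le1$ is sound, with the key inputs being $|\partial_\theta\q|\lesssim\sqrt\q\le\sqrt\varrho$, $|\partial_t\varrho|=\tfrac12\sinh\tfrac t2\lesssim\sqrt\varrho$, boundedness of higher derivatives, and $\varrho\simeq t^2+\q$. For $t>1$, the substitution $u=e^{-t/2}$, which rewrites $\Psi^{\ab}=(1-u^2)(2u)^{s-1}D^{-s}$ with $D\simeq1$, is exactly the right device to make the cancellations transparent and to turn $\partial_t$ into the benign Euler operator $-\tfrac12 u\partial_u$.

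One small inaccuracy that does not affect the conclusion: your stated exponent $\max\{s-1,1\}$ in the case $n=0$, $m\ge1$ is too optimistic when $1<s<2$. Writing $(u\partial_u)^m(u^{s-1}F)=u^{s-1}\sum_{j=0}^m\binom{m}{j}(s-1)^{m-j}(u\partial_u)^jF$, the $j=0$ term $(s-1)^m u^{s-1}F$ survives whenever $s>1$, so the bound is only $u^{s-1}$, not $u^1$. This is harmless, since $s-1>0$ already gives the required $\zeta>0$; the improved exponent $1$ is genuinely available only in the borderline case $s=1$ (i.e.\ $\alpha+\beta=-1$), where the $j=0$ term vanishes. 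Similarly, for $n\ge1$ you correctly get at least one extra factor of $u$ (from $\partial_\theta D=2u\,\partial_\theta\q$), but not one per $\theta$-derivative, since $\partial_\theta^2\q$ contributes no further $u$; again, one extra power suffices.
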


First we show the standard estimates for $R_{N}^{\alpha,\beta}(\theta,\varphi)$.
\begin{proof}[Proof of Theorem \ref{thm:stand}; the case of $R_{N}^{\ab}(\theta,\varphi)$]

Assume first that $N$ is an even number and let $k_{0}=N/2\ge 1$. By term by term differentiation of the defining series it can be verified that $H_{t}^{\alpha,\beta}(\theta,\varphi)$ satisfies in the strip $(t,\theta)\in(0,\infty)\times(0,\pi)$ the Laplace equation based on the Jacobi `Laplacian', which can be written as
\begin{equation*}
\delta_2^{\textrm{even}}H_{t}^{\alpha,\beta}(\theta,\varphi)=\partial_{t}^{2}H_{t}^{\alpha,\beta}(\theta,\varphi)-\lambda_0^{\ab}H_{t}^{\alpha,\beta}(\theta,\varphi).
\end{equation*}
 Iterating this relation, we get 
 \begin{align}\label{pochodne}
 \de H_{t}^{\alpha,\beta}(\theta,\varphi)=\sum_{j=0}^{k_{0}}c_{j}\,\partial_{t}^{2(k_{0}-j)}H_{t}^{\alpha,\beta}(\theta,\varphi) 
 \end{align}
 with some constants $c_{j}=c_{j}(\alpha,\beta, k_{0}).$
 Consequently, it suffices to show that for each $k=0,1,...\,,k_{0}$ the kernel 
 \begin{equation}\label{RieszNorm}
 S_{k}^{\alpha,\beta}(\theta,\varphi):=\int_{0}^{\infty}\partial_{t}^{2k}H_{t}^{\alpha,\beta}(\theta,\varphi)\,t^{2k_{0}-1}\,dt
 \end{equation}
  satisfies conditions (\ref{gr}) and (\ref{sm}). Moreover, in the critical case $\alpha=\beta=-1/2$ the constant $\lambda_0^{\ab}$ equals $0$, formula \eqref{pochodne} becomes   
 \begin{align*}
 \de H_{t}^{-1/2,-1/2}(\theta,\varphi)=\partial_{t}^{2k_{0}}H_{t}^{\alpha,\beta}(\theta,\varphi) 
 \end{align*}
and therefore in this case it is enough to estimate $S_{k_{0}}^{-1/2,-1/2}(\theta,\varphi).$
 
We have 
\begin{align*}
|S_{k}^{\alpha,\beta}(\theta,\varphi)|&\lesssim\iint\d\Pi_{\alpha}(u)\,d\Pi_{\beta}(v)\,\int_{0}^{\infty}|\partial_{t}^{2k}\Psi^{\ab}(t,\q)|\,t^{2k_{0}-1}\,dt\\
&\equiv\iint\d\Pi_{\alpha}(u)\,d\Pi_{\beta}(v)\,(I_{0}+I_{\infty}),
\end{align*}
where $I_{0}$ and $I_{\infty}$ are the integrals in $t$ 
over $(0,1)$ and $(1,\infty)$, respectively. To estimate the first of these integrals we use Lemma \ref{4.8} and the boundedness of $\q$, and then change the variable $t=\sqrt{\q}s$, getting
\begin{align*}
I_{0}&\lesssim\int_{0}^{1}\frac{t^{2k_{0}-1}\,dt}{(t^{2}+\q)^{\alpha+\beta+3/2+k_0}}\le\frac{1}{\q^{\alpha+\beta+3/2}}\int_{0}^{\infty}\frac{s^{2k_{0}-1}\,ds}{(1+s^{2})^{\alpha+\beta+3/2+k_0}}\lesssim\frac{1}{\q^{\alpha+\beta+3/2}}.
\end{align*}
Furthermore, using again Lemma \ref{4.8} and the boundedness of $\q$, we also see that
$$
I_{\infty} \lesssim \int_1^{\infty} \frac{t^{2k_{0}-1}dt}{e^{\zeta t}}\simeq 1 \lesssim 
	\frac{1}{\q^{\alpha+\beta+3/2}},
$$
with some constant $\zeta=\zeta(\alpha,\beta)>0$ (recall that when $\alpha+\beta=-1$ we consider only $k=k_0\ge1$). Together with Lemma \ref{bridge} these estimates give the growth condition (\ref{gr}) for each $S_{k}^{\alpha,\beta}(\theta,\varphi)$, $k=0,1,...,k_{0}$ and thus also for $R_{2k_{0}}^{\ab}(\theta,\varphi)$.

We pass to proving the gradient bound (\ref{sm}) for $S_{k}^{\alpha,\beta}(\theta,\varphi)$. For symmetry reasons it is enough to show that 
$$|\partial_{\theta}S_{k}^{\alpha,\beta}(\theta,\varphi)|\lesssim\frac{1}{|\theta-\varphi|}\,\frac{1}{\mu_{\alpha,\beta}^{+}(B(\theta,|\theta-\varphi|))}$$ 
for each $0\le k\le k_{0}.$
We have
\begin{align*}
|\partial_{\theta}S_{k}^{\alpha,\beta}(\theta,\varphi)|&\lesssim\iint\d\Pi_{\alpha}(u)\,d\Pi_{\beta}(v)\,\int_{0}^{\infty}|\partial_{\theta}\,\partial_{t}^{2k}\,\Psi^{\ab}(t,\q)|\,t^{2k_{0}-1}\,dt\\ 
&\equiv\iint\d\Pi_{\alpha}(u)\,d\Pi_{\beta}(v)\,(J_{0}+J_{\infty}).
\end{align*}
Similarly as in case of the growth condition, we use Lemma \ref{4.8} to bound $J_0$ and $J_{\infty}$ by $\q^{-\alpha-\beta-2}$. Then an application of Lemma \ref{bridge} leads to the desired conclusion.

Now let $N$ be odd and take $k_{0}\ge0$ such that $N=2k_{0}+1$. Then
\begin{align*}
\de H_{t}^{\alpha,\beta}(\theta,\varphi)=\sum_{j=0}^{k_{0}}c_{j}\,\partial_{\theta}\,\partial_{t}^{2(k_{0}-j)}H_{t}^{\alpha,\beta}(\theta,\varphi).
\end{align*}
Consequently, we see that proving (\ref{gr}) and (\ref{sm}) for the kernel in question may be reduced to doing the same for each
\begin{equation*}
\mathcal{S}_{k}^{\alpha,\beta}(\theta,\varphi):=\int_{0}^{\infty}\partial_{\theta}\partial_{t}^{2k}H_{t}^{\alpha,\beta}(\theta,\varphi)\,t^{2k_{0}}\,dt,\qquad  0\le k\le k_{0}.
\end{equation*}

We have
\begin{align*}
|\mathcal{S}_{k}^{\alpha,\beta}(\theta,\varphi)|&\lesssim\iint\d\Pi_{\alpha}(u)\,d\Pi_{\beta}(v)\,\int_{0}^{\infty}|\partial_{\theta}\partial_{t}^{2k}\Psi^{\ab}(t,\q)|t^{2k_{0}}dt\\ 
&\equiv\iint\d\Pi_{\alpha}(u)\,d\Pi_{\beta}(v)\,(\mathcal{I}_{0}+\mathcal{I}_{\infty}),
\end{align*}
where $\mathcal{I}_{0}$ and $\mathcal{I}_{\infty}$ are the integrals in $t$ over $(0,1)$ and $(1,\infty)$, respectively.
Using Lemma \ref{4.8}, the boundedness of $\q$, and changing the variable $t=\sqrt{\q}s,$ we obtain
\begin{align*}
\mathcal{I}_{0}&\lesssim\int_{0}^{1}\frac{t^{2k_{0}}\,dt}{(t^{2}+\q)^{\alpha+\beta+2+k_0}}\le\frac{1}{\q^{\alpha+\beta+3/2}}\int_{0}^{\infty}\frac{s^{2k_0}\,ds}{(1+s^{2})^{\alpha+\beta+2+k_0}}\lesssim\frac{1}{\q^{\alpha+\beta+3/2}}.
\end{align*}
On the other hand, by Lemma \ref{4.8} and the boundedness of $\q$
$$
\mathcal{I}_{\infty} \lesssim \int_1^{\infty} \frac{t^{2k_{0}}\,dt}{e^{t/4}}\lesssim\frac{1}{\q^{\alpha+\beta+3/2}}.
$$
The growth bound for $\mathcal{S}_{k}^{\alpha,\beta}(\theta,\varphi)$, $0\le k\le k_{0}$, follows now from Lemma \ref{bridge}.

Considering the gradient condition, we want to show that for each $0\le k\le k_{0}$
\begin{align*}
|\partial_{\theta}\mathcal{S}_{k}^{\alpha,\beta}(\theta,\varphi)|+|\partial_{\varphi}\mathcal{S}_{k}^{\alpha,\beta}(\theta,\varphi)|\lesssim\frac{1}{|\theta-\varphi|}\,\frac{1}{\mu_{\alpha,\beta}^{+}(B(\theta,|\theta-\varphi|))}.
\end{align*}
Having in mind Lemma \ref{bridge}, we see that it is enough to verify that both terms on the left-hand side above are controlled by $$\iint\frac{\d\Pi_{\alpha}(u)\,d\Pi_{\beta}(v)}{\q^{\alpha+\beta+2}}.$$ 
This, however, follows with the aid of Lemma \ref{4.8} and the arguments already presented in this proof. Thus we omit the details.
\end{proof}
 
 In a similar spirit we deal with $\widetilde{R}_{N}^{\alpha,\beta}(\theta,\varphi),$ but the estimates are slightly more complicated.

\begin{proof}[Proof of Theorem \ref{thm:stand}; the case of $\widetilde{R}_{N}^{\ab}(\theta,\varphi)$] 
 Assume first that $N\ge 2$ is even and write it as $N=2k_{0}$ with $k_{0}\ge 1$. Term by term differentiation of the series defining $\widetilde{H}_{t}^{\alpha,\beta}(\theta,\varphi)$ allows one to verify that this kernel satisfies in the strip $(t,\theta)\in(0,\infty)\times(0,\pi)$ the Laplace equation based on the modified Jacobi operator $\delta\delta^*+\lambda_0^{\ab}.$ This can be written as
 \begin{equation}\label{delta2}
\delta_{2}^{\textrm{odd}}\widetilde{H}_{t}^{\alpha,\beta}(\theta,\varphi)=\partial_{t}^{2}\widetilde{H}_{t}^{\alpha,\beta}(\theta,\varphi)-\lambda_0^{\ab}\widetilde{H}_{t}^{\alpha,\beta}(\theta,\varphi).
\end{equation}
  Iterating this identity we get, for some constants $c_j$,
 \begin{align*}
 \widetilde{R}_{N}^{\alpha,\beta}(\theta,\varphi)=\int_{0}^{\infty}\sum_{j=0}^{k_{0}}c_{j}\,\partial_{t}^{2(k_{0}-j)}\widetilde{H}_{t}^{\alpha,\beta}(\theta,\varphi)\,t^{2k_{0}-1}\,dt.
 \end{align*}
 It is clear that our task will be done once we show that for each $0\le k\le k_{0}$ the kernel
 $$\widetilde{S}_{k}^{\alpha,\beta}(\theta,\varphi):=\int_{0}^{\infty}\partial_{t}^{2k}\widetilde{H}_{t}^{\alpha,\beta}(\theta,\varphi)\,t^{2k_{0}-1}\,dt$$
 satisfies the standard estimates.
 
Proving the growth condition (\ref{gr}) is nearly straightforward, because
\begin{align*}
|\widetilde{S}_{k}^{\alpha,\beta}(\theta,\varphi)|=\frac{\sin\theta\,\sin\varphi}{4}\,\bigg|\int_{0}^{\infty}\partial_{t}^{2k}H_{t}^{\alpha+1,\beta+1}(\theta,\varphi)\,t^{2k_{0}-1}\,dt\bigg|.
\end{align*}
This combined with the already proved growth estimate for $S_{k}^{\alpha+1,\beta+1}(\theta,\varphi)$, see (\ref{RieszNorm}), Corollary \ref{cor:compare} and Lemma \ref{estimates} (b) with $\widetilde{\theta}=\theta$ produces
\begin{align*}
|\widetilde{S}_{k}^{\alpha,\beta}(\theta,\varphi)|&\lesssim\frac{\sin\theta\,\sin\varphi}{(\theta+\varphi)^{2}(\pi-\theta+\pi-\varphi)^{2}}\,\frac{(\theta+\varphi)^{2}(\pi-\theta+\pi-\varphi)^{2}}{\mu_{\alpha+1,\beta+1}^{+}(B(\theta,|\theta-\varphi|))}\lesssim\frac{1}{\mu_{\alpha,\beta}^{+}(B(\theta,|\theta-\varphi|))}.
\end{align*}  

We pass to the gradient condition (\ref{sm}).
For symmetry reasons, it is enough to consider only the derivative in $\theta$. A simple computation shows that 
\begin{align*}
|\partial_{\theta}\widetilde{S}_{k}^{\alpha,\beta}(\theta,\varphi)|&\le\sin\varphi\,\bigg|\int_{0}^{\infty}\partial_{t}^{2k}H_{t}^{\alpha+1,\beta+1}(\theta,\varphi)\,t^{2k_{0}-1}\,dt\bigg|\\
&\quad+\sin\theta\,\sin\varphi\,\bigg|\int_{0}^{\infty}\partial_{\theta}\partial_{t}^{2k} H_{t}^{\alpha+1,\beta+1}(\theta,\varphi)\,t^{2k_{0}-1}\,dt\bigg|\\
&\equiv V_{1}+V_{2}.
\end{align*}
Using again the growth estimate for $S_{k}^{\alpha+1,\beta+1}(\theta,\varphi)$ and then Lemma \ref{estimates} (a), we get
$$V_1\lesssim\frac{\sin\varphi}{\mu_{\alpha+1,\beta+1}^{+}(B(\theta,|\theta-\varphi|))}\lesssim\frac{1}{|\theta-\varphi|}\,\frac{1}{\mu_{\alpha,\beta}^{+}(B(\theta,|\theta-\varphi|))}.$$
Considering $V_{2}$, we observe that the gradient bound for $S_{k}^{\alpha+1,\beta+1}(\theta,\varphi)$ implies
 $$V_{2}\lesssim\frac{1}{|\theta-\varphi|}\,\frac{\sin\theta\sin\varphi}{\mu_{\alpha+1,\beta+1}^{+}(B(\theta,|\theta-\varphi|))}.$$
 This combined with Corollary \ref{cor:compare} and Lemma \ref{estimates} (b) with $\widetilde{\theta}=\theta$ gives 
$$V_{2}\lesssim\frac{1}{|\theta-\varphi|}\,\frac{1}{\mu_{\alpha,\beta}^{+}(B(\theta,|\theta-\varphi|))}.$$
The reasoning for the case of $N$ even is finished.

Now let $N$ be odd, $N=2k_{0}+1$ for some $k_{0}\ge 0$.
We have 
\begin{align*}
\widetilde{R}_{N}^{\alpha,\beta}(\theta,\varphi)&=\int_{0}^{\infty}\sum_{j=0}^{k_{0}}c_{j}\,\delta^{*}\partial_{t}^{2(k_{0}-j)}\widetilde{H}_{t}^{\alpha,\beta}(\theta,\varphi)\,t^{2k_{0}}\,dt.
\end{align*} 
Again, we observe that it is enough to show that, for each $0\le k\le k_{0}$, the kernel
$$\widetilde{\mathcal{S}}_{k}^{\alpha,\beta}(\theta,\varphi):=\int_{0}^{\infty}\delta^{*}\partial_{t}^{2k}\widetilde{H}^{\alpha,\beta}(\theta,\varphi)\,t^{2k_{0}}\,dt$$
satisfies the standard estimates (here and elsewhere $\delta^*$ acts always on $\theta$ variable).

A direct computation reveals that
\begin{align*}
|\widetilde{\mathcal{S}}_{k}^{\alpha,\beta}(\theta,\varphi)|&\lesssim\sin\theta\,\sin\varphi\,\int_{0}^{\infty}|\partial_{\theta}\partial_{t}^{2k}\,H_{t}^{\alpha+1,\beta+1}(\theta,\varphi)|\,t^{2k_{0}}\,dt\\ 
&\quad+\sin\varphi\,\int_{0}^{\infty}|\partial_{t}^{2k}\,H_{t}^{\alpha+1,\beta+1}(\theta,\varphi)|\,t^{2k_{0}}\,dt\\ 
&\equiv W_{1}+W_{2}.
\end{align*}
Estimates analogous to those in the proof of the growth condition for $\mathcal{S}_{k}^{\alpha+1,\beta+1}(\theta,\varphi)$ show that $$\int_{0}^{\infty}|\partial_{\theta}\partial_{t}^{2k}\,H_{t}^{\alpha+1,\beta+1}(\theta,\varphi)|\,t^{2k_{0}}dt\lesssim \iint\frac{d\Pi_{\alpha+1}(u)\,d\Pi_{\beta+1}(v)}{\q^{\alpha+\beta+7/2}}.$$ 
Then Lemma \ref{L4.3*} (with $\gamma_1=\gamma_2=1,\kappa=\kappa_1=\kappa_2=0$) and Lemma \ref{estimates} (b) (with $\widetilde{\theta}=\theta$) lead to
$$W_{1}\lesssim\frac{1}{\mu_{\alpha,\beta}^{+}(B(\theta,|\theta-\varphi|))}.$$
Proceeding in the same way as in the proof of the growth condition for $S_{k}^{\alpha,\beta}(\theta,\varphi)$ we get $$\int_{0}^{\infty}|\partial_{t}^{2k}\,H_{t}^{\alpha+1,\beta+1}(\theta,\varphi)|\,t^{2k_{0}}\,dt\lesssim\iint\frac{d\Pi_{\alpha+1}(u)\,d\Pi_{\beta+1}(v)}{\q^{\alpha+\beta+3}},$$
and applying Lemma \ref{L4.3*} (with $\gamma_{1}=\gamma_{2}=\kappa=1/2, \kappa_{1}=\kappa_{2}=0$) we obtain 
$$W_{2}\lesssim\frac{1}{\mu_{\alpha,\beta}^{+}(B(\theta,|\theta-\varphi|))}.$$
The growth estimate for $\widetilde{\mathcal{S}}_{k}^{\alpha,\beta}(\theta,\varphi)$, $0\le k\le k_0$, follows.

To commence the proof of the gradient condition we use \eqref{delta2} to write
 
\begin{align*}
|\partial_{\theta}\widetilde{\mathcal{S}}_{k}^{\alpha,\beta}(\theta,\varphi)|&\lesssim\sin\theta\,\sin\varphi\,\int_{0}^{\infty}|\partial_{t}^{2k+2}\,H_{t}^{\alpha+1,\beta+1}(\theta,\varphi)|\,t^{2k_{0}}\,dt\\ 
&\quad+\sin\theta\sin\varphi\,\int_{0}^{\infty}|\partial_{t}^{2k}\,H_{t}^{\alpha+1,\beta+1}(\theta,\varphi)|\,t^{2k_{0}}\,dt\\
&\equiv Z_{1}+Z_{2}.
\end{align*}
Proceeding in the same way as before (precisely, splitting the relevant inner integral in $t$ in a suitable way, using Lemma \ref{4.8} and then integrating by change of the variable) we obtain
$$\int_{0}^{\infty}|\partial_{t}^{2k+2}\,H_{t}^{\alpha+1,\beta+1}(\theta,\varphi)|\,t^{2k_{0}}\,dt\lesssim\iint\frac{d\Pi_{\alpha+1}(u)\,d\Pi_{\beta+1}(v)}{\q^{\alpha+\beta+4}}.$$
Similarly, by Lemma \ref{4.8} we conclude the bound
$$\int_{0}^{\infty}|\partial_{t}^{2k}\,H_{t}^{\alpha+1,\beta+1}(\theta,\varphi)|\,t^{2k_{0}}\,dt\lesssim\iint\frac{d\Pi_{\alpha+1}(u)\,d\Pi_{\beta+1}(v)}{\q^{\alpha+\beta+3}}\lesssim\iint\frac{d\Pi_{\alpha+1}(u)\,d\Pi_{\beta+1}(v)}{\q^{\alpha+\beta+4}}.$$
Applying now the second estimate from Lemma \ref{L4.3*} (taken with $\gamma_1=\gamma_2=1,\kappa=\kappa_1=\kappa_2=0$) and Lemma \ref{estimates} (b) (with $\widetilde{\theta}=\theta$) we get
\begin{align*}
Z_{1}+Z_{2}\lesssim\frac{1}{|\theta-\varphi|}\,\frac{1}{\mu_{\alpha,\beta}^{+}(B(\theta,|\theta-\varphi|))}.
\end{align*}

It remains to estimate the derivative in $\varphi$. We have
\begin{align*}
|\partial_{\varphi}\widetilde{\mathcal{S}}_{k}^{\alpha,\beta}(\theta,\varphi)|&\lesssim\sin\theta\,\sin\varphi\,\int_{0}^{\infty}|\partial_{\varphi}\partial_{\theta}\partial_{t}^{2k}\,H_{t}^{\alpha+1,\beta+1}(\theta,\varphi)|\,t^{2k_{0}}\,dt\\ 
&\quad+\sin\varphi\,\int_{0}^{\infty}|\partial_{\varphi}\partial_{t}^{2k}\,H_{t}^{\alpha+1,\beta+1}(\theta,\varphi)|\,t^{2k_{0}}\,dt\\ 
&\quad+\sin\theta\,\int_{0}^{\infty}|\partial_{\theta}\partial_{t}^{2k}\,H_{t}^{\alpha+1,\beta+1}(\theta,\varphi)|\,t^{2k_{0}}\,dt\\
&\quad+\int_{0}^{\infty}|\partial_{t}^{2k}\,H_{t}^{\alpha+1,\beta+1}(\theta,\varphi)|\,t^{2k_{0}}\,dt\\ 
&\equiv P_{1}+P_{2}+P_{3}+P_{4}. 
\end{align*}
In view of Lemma \ref{4.8} and earlier considerations, the following estimates hold:
\begin{align*}
P_1&\lesssim\sin\theta\,\sin\varphi\,\iint\frac{d\Pi_{\alpha+1}(u)\,d\Pi_{\beta+1}(v)}{\q^{\alpha+\beta+4}},\\
P_2&\lesssim\sin\varphi\,\iint\frac{d\Pi_{\alpha+1}(u)\,d\Pi_{\beta+1}(v)}{\q^{\alpha+\beta+7/2}},\\
P_3&\lesssim\sin\theta\,\iint\frac{d\Pi_{\alpha+1}(u)\,d\Pi_{\beta+1}(v)}{\q^{\alpha+\beta+7/2}},\\
P_4&\lesssim\iint\frac{d\Pi_{\alpha+1}(u)\,d\Pi_{\beta+1}(v)}{\q^{\alpha+\beta+3}}.
\end{align*}
Using these bounds, $P_{1}$ may be dealt with in the same way as $Z_1$ above. To treat $P_{2}$, $P_{3}$ and $P_4$ it suffices to apply the second estimate from Lemma \ref{L4.3*} with suitably chosen parameters.
Summing up, we conclude that for each $0\le k\le k_0$
\begin{align*}
|\partial_{\varphi}\widetilde{\mathcal{S}}_{k}^{\alpha,\beta}(\theta,\varphi)|&\lesssim\frac{1}{|\theta-\varphi|}\,\frac{1}{\mu_{\alpha,\beta}^{+}(B(\theta,|\theta-\varphi|))}.
\end{align*}

The proof of the case of $\widetilde{R}_N^{\ab}(\theta,\varphi)$ in Theorem \ref{thm:stand} is complete.
\end{proof}
The next kernels to estimate are those of the auxiliary $g$-functions. To this end, we denote $\mathbb{B}=L^2(t^{2M+2N-1}dt)$. Moreover, for the sake of clarity, we consider the essential range of $\alpha,\beta\ge-1/2$ satisfying $\alpha+\beta>-1$. The remaining critical case $\alpha=\beta=-1/2$ requires simplified arguments comparing to what follows (in particular, $\de H_t^{-1/2,-1/2}(\theta,\varphi)$ and $\ko \widetilde{H}_t^{-1/2,-1/2}(\theta,\varphi)$ can be expressed much easier via the $\partial_t$ derivatives), and it is left to the reader.

Given $\theta,\theta'\in(0,\pi),$ denote by $I(\theta,\theta')$ the open interval with endpoints $\theta$ and $\theta'$.
We shall need the following technical result, which is actually a consequence of Lemma \ref{4.8}.

\begin{lem}\label{4.8*}
Assume that $\alpha,\beta\ge -1/2$ and $\alpha+\beta> -1$. Let $M,N\ge 0, M+N>0$, $K\ge0$ and $\ep_1\in\{0,1,2\},\ep_2\in\{0,1\},\tau\in\{0,1\},\gamma\in\{\tau,2\tau\}$. The following estimates hold uniformly in $\theta, \varphi\in (0,\pi)$ or in $\theta, \theta', \varphi\in (0,\pi)$ satisfying $2|\theta-\theta'|<|\theta-\varphi|$, respectively.
\begin{itemize}
\item[(a)]
If $\ep_1+K+2\tau-\gamma\le M+N$, then
\begin{align*}
\big\|\partial_{\theta}^{\ep_1}\partial_{t}^{K}H_{t}^{\alpha+\tau,\beta+\tau}({\theta}, \varphi)\big\|_{\mathbb{B}}&\lesssim \frac{(\theta+\varphi)^{-\gamma}(\pi-\theta+\pi-\varphi)^{-\gamma}}{\mu_{\alpha,\beta}^{+}(B(\theta,|\theta-\varphi|))}.
\end{align*}
\item[(b)]
If $\ep_1+\ep_2+K+2\tau-\gamma\le M+N+1$, then
\begin{align*}
\Big\|\sup_{\widetilde{\theta}\in I(\theta,\theta')} \big|\partial_{\varphi}^{\ep_2}\partial_{\theta}^{\ep_1}\partial_{t}^{K}H_{t}^{\alpha+\tau,\beta+\tau}(\widetilde{\theta}, \varphi)\big|\Big\|_{\mathbb{B}}\lesssim \frac{(\theta+\varphi)^{-\gamma}(\pi-\theta+\pi-\varphi)^{-\gamma}}{|\theta-\varphi|\,\mu_{\alpha,\beta}^{+}(B(\theta,|\theta-\varphi|))}.
\end{align*}
\end{itemize}

After interchanging the roles of $\theta$ and $\varphi$ on the left-hand side of the
bound in (b), the estimate holds uniformly in $\theta,\varphi,\varphi'\in(0,\pi)$ satisfying $2|\varphi-\varphi'|<|\theta-\varphi|.$ 
\end{lem}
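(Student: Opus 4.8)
The plan is to reduce the assertion, as in the neighbouring proofs of Theorem~\ref{thm:stand}, to a pointwise (in $u,v$) estimate coming from the integral representation \eqref{PJker} with the parameters shifted by $\tau$. Writing $a=\alpha+\tau$ and $b=\beta+\tau$, one has $H_t^{a,b}(\theta,\varphi)=c_{a,b}\iint \Psi^{a,b}(t,\q)\,d\Pi_{a}(u)\,d\Pi_{b}(v)$, and $a+b=\alpha+\beta+2\tau>-1$, so the exponential decay in Lemma~\ref{4.8} holds with a strictly positive constant. First I would differentiate under the integral sign (legitimate by the blanket remark preceding the proof of Theorem~\ref{thm:stand}) and then apply Minkowski's integral inequality to pull the $\mathbb{B}$-norm inside the double integral against $d\Pi_a\,d\Pi_b$. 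After this it suffices, in case (a), to estimate $\bigl\|\partial_\theta^{\ep_1}\partial_t^{K}\Psi^{a,b}(t,\q)\bigr\|_{\mathbb{B}}$ and, in case (b), the quantity $\bigl\|\sup_{\widetilde\theta\in I(\theta,\theta')}\bigl|\partial_\varphi^{\ep_2}\partial_\theta^{\ep_1}\partial_t^{K}\Psi^{a,b}(t,q(\widetilde\theta,\varphi,u,v))\bigr|\bigr\|_{\mathbb{B}}$, uniformly in $u,v$; for the latter I would use Lemma~\ref{lem:comp} to replace $q(\widetilde\theta,\varphi,u,v)$ by $\q$ (valid since $2|\theta-\widetilde\theta|\le 2|\theta-\theta'|<|\theta-\varphi|$ for $\widetilde\theta\in I(\theta,\theta')$).

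The pointwise estimate would be obtained by splitting the $t$-integral defining the $\mathbb{B}$-norm at $t=1$. The piece over $(1,\infty)$ is $O(1)$ by the exponential bounds in Lemma~\ref{4.8}. Over $(0,1)$, Lemma~\ref{4.8} and the boundedness of $\q$ reduce matters to $\int_0^1 (t^2+\q)^{-A}\,t^{2M+2N-1}\,dt$ with $A=2(a+b)+3+\ep_1+K$, increased by $1$ when also $\ep_2=1$ (this extra half-power being the gain in the second bound of Lemma~\ref{4.8}); substituting $t=\sqrt{\q}\,s$ and comparing the resulting $s$-integral over $(0,1/\sqrt{\q})$ with its value over $(0,\infty)$ --- finite when $2A>2M+2N$ and otherwise producing a compensating power of $\q$, with only a logarithmic loss in the borderline case --- one arrives at
$$
\bigl\|\partial_\theta^{\ep_1}\partial_t^{K}\Psi^{a,b}(t,\q)\bigr\|_{\mathbb{B}}\ \lesssim\ \q^{-\rho}+1,\qquad \rho=(a+b)+\tfrac32+\tfrac{\ep_2}{2}+\tfrac{\ep_1+K-M-N}{2},
$$
where $\ep_2=0$ is understood in case (a), and the same bound holds for the $\sup$ expression in case (b).

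The final step is to integrate $\q^{-\rho}+1$ against $d\Pi_a\,d\Pi_b$ by means of Lemma~\ref{L4.3*} with the choice $\gamma_1=\gamma_2=\gamma/2$, $\kappa=\tau-\gamma/2$, $\kappa_1=\kappa_2=0$. These parameters are nonnegative because $\gamma\le2\tau$, they make the two $\Pi$-measures in Lemma~\ref{L4.3*} coincide with $d\Pi_a$ and $d\Pi_b$, and, after the equivalences $\sin\frac\theta2\simeq\theta$ and $\cos\frac\theta2\simeq\pi-\theta$ on $(0,\pi)$, the right-hand side there is exactly $(\theta+\varphi)^{-\gamma}(\pi-\theta+\pi-\varphi)^{-\gamma}\slash\mu_{\alpha,\beta}^{+}(B(\theta,|\theta-\varphi|))$, possibly times $1\slash|\theta-\varphi|$ if the second estimate is used. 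In case (a) the relevant exponent of $\q$ is $\rho_*=\alpha+\beta+\tfrac32+\tfrac{\gamma}{2}+\tau$; since $\q\lesssim1$ and $\rho_*>0$, one has $\q^{-\rho}+1\lesssim\q^{-\rho_*}$ provided $\rho\le\rho_*$, and an elementary computation shows this is equivalent to $\ep_1+K+2\tau-\gamma\le M+N$, the hypothesis of (a). In case (b) one uses instead the second estimate of Lemma~\ref{L4.3*}, whose exponent is $\rho_*+\tfrac12$ and whose right-hand side carries the extra factor $1\slash|\theta-\varphi|$; now $\rho\le\rho_*+\tfrac12$ unwinds to $\ep_1+\ep_2+K+2\tau-\gamma\le M+N+1$, the hypothesis of (b). Finally, the version with the roles of $\theta$ and $\varphi$ interchanged follows from the same argument, since $q$ --- hence $\Psi^{a,b}$ and $H_t^{a,b}$ --- is symmetric in its first two arguments and Lemmas~\ref{lem:comp} and~\ref{4.8} have symmetric counterparts.

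I expect the main difficulty to be organizational rather than analytic. One must arrange the chain rule so that Lemma~\ref{4.8} applies directly (for $\ep_1=2$ the term with $\partial_\theta^2 q$ also contributes, but it is already absorbed into that lemma), and then juggle the five free parameters of Lemma~\ref{L4.3*} so that the residual power of $\q$ left by the $t$-integration, the prescribed prefactor $(\theta+\varphi)^{-\gamma}(\pi-\theta+\pi-\varphi)^{-\gamma}$, and the base indices of the $\Pi$-measures are matched simultaneously. The numerical hypotheses in (a) and (b) are precisely the inequalities that make this matching succeed, and once the bookkeeping is set up, verifying them is routine.
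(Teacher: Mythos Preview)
Your proposal is correct and follows essentially the same route as the paper: Minkowski's inequality to pass inside the $d\Pi_a\,d\Pi_b$ integral, Lemma~\ref{4.8} for the pointwise bound on $\Psi^{a,b}$, splitting the $t$-integral at $t=1$ with the change of variable $t=\sqrt{\q}s$, Lemma~\ref{lem:comp} to handle the $\sup$ over $\widetilde\theta$, and finally Lemma~\ref{L4.3*} (or its special case Lemma~\ref{bridge}) to close. The only organizational difference is that the paper treats $\tau=0$ first via Lemma~\ref{bridge} and then remarks that $\tau=1$ is analogous using Lemma~\ref{L4.3*} and Lemma~\ref{estimates}, whereas you handle both values of $\tau$ uniformly through the parameter choice $\gamma_1=\gamma_2=\gamma/2$, $\kappa=\tau-\gamma/2$ in Lemma~\ref{L4.3*}; your bookkeeping of the exponent $\rho$ versus $\rho_*$ is also slightly more explicit than the paper's, which simply replaces the exponent by its worst case at the outset.
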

\begin{proof}
We first show (b) with $\tau=0$. Assume that
 $\ep_1+\ep_2+K\le M+N+1$. By \eqref{PJker} and Minkowski's integral inequality
\begin{align*}
&\Big\|\sup_{\widetilde{\theta}\in I(\theta,\theta')}\big|\partial_{\varphi}^{\ep_2}\partial_{\theta}^{\ep_1}\partial_{t}^{K}H_{t}^{\alpha,\beta}(\widetilde{\theta}, \varphi)\big|\Big\|_{\mathbb{B}}\\
&\lesssim \iint d\Pi_{\alpha}(u)d\Pi_{\beta}(v)\bigg( \int_0^{\infty}\Big(\sup_{\widetilde{\theta}\in I(\theta,\theta')}\big|\partial_{\varphi}^{\ep_2}\partial_{\theta}^{\ep_1}\partial_{t}^{K}\Psi^{\ab}(t,q(u,v,\widetilde{\theta},\varphi))\big|\Big)^2 t^{2M+2N+1}dt\bigg)^{1\slash 2}.
\end{align*}
Next we split the inner integral in $t$ according to the intervals $(0,1)$ and $(1,\infty)$ and denote
the resulting integrals by $I_0$ and $I_{\infty}$, respectively. Then, using Lemma \ref{4.8} and the boundedness of $\q$, Lemma \ref{lem:comp},
and then changing the variable $t=\sqrt{\q}s$, we get
\begin{align*}
I_0 &\lesssim \int_0^1\sup_{\widetilde{\theta}\in I(\theta,\theta')}\frac{t^{2M+2N-1}dt}{(t^2+q(u,v,\widetilde{\theta},\varphi))^{2\alpha+2\beta+3+M+N+1}}\simeq \int_0^1\frac{t^{2M+2N-1}dt}{(t^2+\q)^{2\alpha+2\beta+4+M+N}}\\
&\le\frac{1}{\q^{2\alpha+2\beta+4}}\int_{0}^{\infty}\frac{s^{2M+2N-1}\,ds}{(1+s^{2})^{2\alpha+2\beta+4+M+N}}
\lesssim \frac{1}{\q^{2\alpha+2\beta+4}},
\end{align*}
provided that $2|\theta-\theta'|<|\theta-\varphi|.$
Moreover, by Lemma \ref{4.8} and the boundedness of $\q$, we have
$$
I_{\infty} \lesssim \int_1^{\infty} \frac{t^{2M+2N-1}dt}{e^{\zeta t}}\lesssim 1\lesssim 
	\frac{1}{\q^{2\alpha+2\beta+4}}
$$
for some constant $\zeta=\zeta(\alpha,\beta)>0$. Therefore, for $2|\theta-\theta'|<|\theta-\varphi|$
$$
\Big\|\sup_{\widetilde{\theta}\in I(\theta,\theta')}\big|\partial_{\varphi}^{\ep_2}\partial_{\theta}^{\ep_1}\partial_{t}^{K}H_{t}^{\alpha,\beta}(\widetilde{\theta}, \varphi)\big|\Big\|_{\mathbb{B}} \lesssim
	\iint \frac{d\Pi_{\alpha}(u)d\Pi_{\beta}(v)}{\q^{\alpha+\beta+2}}
$$
and (b) with $\tau=0$ follows from Lemma \ref{bridge}.

Proceeding essentially in the same way as above, with the aid of Lemma \ref{4.8}, we obtain 
\begin{align*}
\big\|\partial_{\theta}^{\ep_1}\partial_{t}^{K}H_{t}^{\alpha,\beta}(\theta, \varphi)\big\|_{\mathbb{B}}\lesssim	\iint \frac{d\Pi_{\alpha}(u)d\Pi_{\beta}(v)}{\q^{\alpha+\beta+3/2}}
\end{align*}
for  $\ep_1+K\le M+N$,
and thus can use Lemma \ref{bridge} to get the estimate (a) with $\tau=0$.

The case of $\tau=1$ can be treated in a similar manner, employing in addition Lemma \ref{L4.3*} and Lemma \ref{estimates}. The final assertion is justified by the symmetry of $H_{t}^{\alpha,\beta}({\theta}, \varphi)$
\end{proof}

\begin{proof}[Proof of Theorem \ref{thm:stand}; the case of $\{\partial_t^M\delta_N^{\textrm{\emph{even}}} H_t^{\ab}(\theta,\varphi)\}_{t>0}$.]

First assume that $N$ is even and write it as $N=2k_{0},$ with $k_0\ge0$. Then, by the relation \eqref{pochodne}, we see that it is enough to show that, for each $0\le k\le k_{0}$, the vector-valued kernel 
$$T_{k}^{\alpha,\beta}(\theta,\varphi):=\big\{\partial_{t}^{M+2k}H_{t}^{\alpha,\beta}(\theta,\varphi)\big\}_{t>0}$$
satisfies the standard estimates.

The growth condition \eqref{gr} for $T_{k}^{\alpha,\beta}(\theta,\varphi)$ follows directly from Lemma \ref{4.8*} (a) (taken with $\ep_1=\tau=\gamma=0,K=M+2k$).

To prove the smoothness condition \eqref{sm1} we use the Mean Value Theorem and Lemma \ref{4.8*} (b) (with $\ep_1=1,\ep_2=\tau=\gamma=0,K=M+2k$), getting for ${\theta}$ and $\theta'$ satisfying $2|\theta-\theta'|<|\theta-\varphi|$
\begin{align*}
\big\|T_{k}^{\alpha,\beta}(\theta,\varphi)-
	T_{k}^{\alpha,\beta}(\theta',\varphi)\big\|_{\mathbb{B}}
& \le |\theta-\theta'|\Big\|\sup_{\widetilde{\theta}\in I(\theta,\theta')}\big|\partial_{\theta}\partial_{t}^{M+2k}H_{t}^{\alpha,\beta}(\widetilde{\theta},\varphi)\big|\Big\|_{\mathbb{B}}\\
&\lesssim \frac{|\theta-\theta'|}{|\theta-\varphi|}\;\frac{1}{\m_{\ab}^{+}(B(\theta,|\theta-\varphi|))}.
\end{align*}
Analogous arguments lead to \eqref{sm2}, the other smoothness condition.

Considering $N$ odd, $N=2k_{0}+1$, we take into account \eqref{pochodne} and observe that it suffices to estimate for each $0\le k\le k_{0}$ the vector-valued kernel
$$\mathcal{T}_{k}^{\alpha,\beta}(\theta,\varphi):=\big\{\partial_{\theta}\partial_{t}^{M+2k}H_{t}^{\alpha,\beta}(\theta,\varphi)\big\}_{t>0}.$$
This, however, can be done essentially by repeating the arguments used for the case of $N$ even. We leave the details to interested readers.
\end{proof}

Next we treat the kernel $\{\partial_t^M\ko\widetilde{H}_t^{\ab}(\theta,\varphi)\}_{t>0}$.

\begin{proof}[Proof of Theorem \ref{thm:stand}; the case of $\{\partial_t^M\delta_N^{\textrm{\emph{odd}}}\widetilde{H}_t^{\ab}(\theta,\varphi)\}_{t>0}$.]

We begin with considering the case of $N$ even and write it as $N=2k_{0}$ with $k_0\ge0$. By iterating \eqref{delta2} we see that it is enough to show that for each $0\le k\le k_{0}$ the vector-valued kernel
$$\widetilde{T}_{k}^{\alpha,\beta}(\theta,\varphi)=\big\{\partial_{t}^{M+2k}\widetilde{H}_{t}^{\alpha,\beta}(\theta,\varphi)\big\}_{t>0} $$
satisfies the standard estimates.

The growth estimate is straightforward. Applying Lemma \ref{estimates} (b) (with $\widetilde{\theta}=\theta$) and Lemma \ref{4.8*} (a) (with $\ep_1=0,\tau=1,\gamma=2,K=M+2k$) we get
\begin{align*}
\big\|\widetilde{T}_{k}^{\alpha,\beta}(\theta,\varphi)\big\|_{\mathbb{B}}&= 
\frac{\sin\theta\,\sin\varphi}{4}\,\big\|H_{t}^{\alpha+1,\beta+1}(\theta,\varphi)\big\|_{\mathbb{B}}\lesssim\frac{1}{\mu_{\alpha,\beta}^{+}(B(\theta,|\theta-\varphi|))}.
\end{align*}  

We pass to the smoothness conditions. For symmetry reasons, it is enough to show \eqref{sm1}. Using the Mean Value Theorem  together with Lemma \ref{estimates} (b) we obtain for $2|\theta-\theta'|<|\theta-\varphi|$
\begin{align*}
&\big\|\widetilde{T}_{k}^{\alpha,\beta}(\theta,\varphi)-\widetilde{T}_{k}^{\alpha,\beta}(\theta',\varphi)\big\|_{\mathbb{B}}\\
&\le|\theta-\theta'|\bigg[(\theta+\varphi)^{2}(\pi-\theta+\pi-\varphi)^{2}\,\Big\|\sup_{\widetilde{\theta}\in I(\theta,\theta')}\big|\partial_{\theta}\,\partial_{t}^{M+2k}H_{t}^{\alpha+1,\beta+1}(\widetilde{\theta},\varphi)\big|\Big\|_{\mathbb{B}}\\
&\quad+(\theta+\varphi)(\pi-\theta+\pi-\varphi)\,\Big\|\sup_{\widetilde{\theta}\in I(\theta,\theta')}\big|\partial_{t}^{M+2k}H_{t}^{\alpha+1,\beta+1}(\widetilde{\theta},\varphi)\big|\Big\|_{\mathbb{B}}\bigg].
\end{align*}
Now, we apply Lemma \ref{4.8*} (b) (with  $\ep_1=1,\ep_2=0,\tau=1,\gamma=2,K=M+2k$ in case of the first of the above components and $\ep_1=\ep_2=0,\tau=\gamma=1,K=M+2k$ in case of the second one), obtaining
\begin{align*}
&\big\|\widetilde{T}_{k}^{\alpha,\beta}(\theta,\varphi)-\widetilde{T}_{k}^{\alpha,\beta}(\theta',\varphi)\big\|_{\mathbb{B}}\lesssim\frac{|\theta-\theta'|}{|\theta-\varphi|}\,\frac{1}{\mu_{\alpha,\beta}^{+}(B(\theta,|\theta-\varphi|))}
\end{align*}
for $2|\theta-\theta'|<|\theta-\varphi|$.

Consider now $N$ odd, say $N=2k_{0}+1$ with $k_0\ge0$. The kernel we need to estimate is, see \eqref{delta2},
\begin{equation*}
\big\{\partial_{t}^{M}\ko\widetilde{H}_{t}^{\alpha,\beta}(\theta,\varphi)\big\}_{t>0}=\bigg\{\sum_{j=0}^{k_{0}}c_{j}\,\delta^{*}\partial_{t}^{M}\partial_{t}^{2(k_{0}-j)}\widetilde{H}_{t}^{\alpha,\beta}(\theta,\varphi)\bigg\}_{t>0}.
\end{equation*}
Thus it is enough to show that for each $0\le k\le k_{0}$ the kernel
$$\widetilde{\mathcal{T}}_{k}^{\alpha,\beta}(\theta,\varphi)=\big\{\delta^{*}\partial_{t}^{M+2k}\widetilde{H}_{t}^{\alpha,\beta}(\theta,\varphi)\big\}_{t>0}
$$ satisfies the standard estimates.
A direct computation reveals that
\begin{align*}
\|\widetilde{\mathcal{T}}_{k}^{\alpha,\beta}(\theta,\varphi)\|_{\mathbb{B}}&\lesssim\sin\theta\,\sin\varphi\,\|\partial_{\theta}\partial_{t}^{M+2k}H_{t}^{\alpha+1,\beta+1}(\theta,\varphi)\|_{\mathbb{B}}\\ 
&\quad+\sin\varphi\,\|\partial_{t}^{M+2k}H_{t}^{\alpha+1,\beta+1}(\theta,\varphi)\|_{\mathbb{B}}.
\end{align*}
An application of Lemma \ref{4.8*} (a) to both of the above terms (with $\ep_1=\tau=1,\gamma=2,K=M+2k$ in case of the first one and with $\ep_1=0,\tau=\gamma=1,K=M+2k$ to the second one) gives the growth condition.

To show \eqref{sm1}, notice that the Mean Value Theorem, \eqref{delta2} and Lemma \ref{estimates} (b) lead to the bound
\begin{align*}
&\big\|\widetilde{\mathcal{T}}_{k}^{\alpha,\beta}(\theta,\varphi)-\widetilde{\mathcal{T}}_{k}^{\alpha,\beta}(\theta',\varphi)\big\|_{\mathbb{B}}\\
&\lesssim |\theta-\theta'|(\theta+\varphi)^{2}(\pi-\theta+\pi-\varphi)^{2}\bigg[\Big\|\sup_{\widetilde{\theta}\in I(\theta,\theta')}\big|\partial_{t}^{M+2k+2}H_{t}^{\alpha+1,\beta+1}(\widetilde{\theta},\varphi)\big|\Big\|_{\mathbb{B}}\\
&\quad+\Big\|\sup_{\widetilde{\theta}\in I(\theta,\theta')}\big|\partial_{t}^{M+2k}H_{t}^{\alpha+1,\beta+1}(\widetilde{\theta},\varphi)\big|\Big\|_{\mathbb{B}}\bigg].
\end{align*}
Applying Lemma \ref{4.8*} (b) to each component in the above sum (with $\ep_1=\ep_2=0,\tau=1,\gamma=2,K=M+2k+2$ and with $\ep_1=\ep_2=0,\tau=1,\gamma=2,K=M+2k$, respectively) gives the desired conclusion.

It remains to prove \eqref{sm2}.
By the Mean Value Theorem and Lemma \ref{estimates} (b)
\begin{align*}
&\big\|\widetilde{\mathcal{T}}_{k}^{\alpha,\beta}(\theta,\varphi)-\widetilde{\mathcal{T}}_{k}^{\alpha,\beta}(\theta,\varphi')\big\|_{\mathbb{B}}\\
&\lesssim |\varphi-\varphi'|\big\|\partial_{\varphi}\delta^{*}\partial_{t}^{M+2k}\widetilde{H}_{t}^{\alpha,\beta}(\theta,\widetilde{\varphi})\big\|_{\mathbb{B}}\\
&\lesssim|\varphi-\varphi'|\bigg[\Big\|\sup_{\widetilde{\varphi}\in I(\varphi,\varphi')}\big|\partial_{t}^{M+2k}H_{t}^{\alpha+1,\beta+1}(\theta,\widetilde{\varphi})\big|\Big\|_{\mathbb{B}}\\
&\quad+(\theta+\varphi)(\pi-\theta+\pi-\varphi)\Big\|\sup_{\widetilde{\varphi}\in I(\varphi,\varphi')}\big|\partial_{\theta}\partial_{t}^{M+2k}H_{t}^{\alpha+1,\beta+1}(\theta,\widetilde{\varphi})\big|\Big\|_{\mathbb{B}}\\ &\quad+(\theta+\varphi)(\pi-\theta+\pi-\varphi)\Big\|\sup_{\widetilde{\varphi}\in I(\varphi,\varphi')}\big|\partial_{\varphi}\partial_{t}^{M+2k}H_{t}^{\alpha+1,\beta+1}(\theta,\widetilde{\varphi})\big|\Big\|_{\mathbb{B}}\\
&\quad+(\theta+\varphi)^{2}(\pi-\theta+\pi-\varphi)^{2}\Big\|\sup_{\widetilde{\varphi}\in I(\varphi,\varphi')}\big|\partial_{\theta}\partial_{\varphi}\partial_{t}^{M+2k}H_{t}^{\alpha+1,\beta+1}(\theta,\widetilde{\varphi})\big|\Big\|_{\mathbb{B}}\bigg].
\end{align*}
Now observe that each of the above terms can be treated by means of Lemma \ref{4.8*} with suitably chosen parameters. Consequently, we conclude that for $2|\varphi-\varphi'|\le|\theta-\varphi|$
\begin{align*}
\big\|\widetilde{\mathcal{T}}_{k}^{\alpha,\beta}(\theta,\varphi)-\widetilde{\mathcal{T}}_{k}^{\alpha,\beta}(\theta,\varphi')\big\|_{\mathbb{B}}\lesssim \frac{|\varphi-\varphi'|}{|\theta-\varphi|}\,
	\frac{1}{\m_{\ab}^{+}(B(\theta,|\theta-\varphi|))}.
\end{align*}

The proof of the case of $\{\partial_t^M\ko\widetilde{H}_t^{\ab}(\theta,\varphi)\}_{t>0}$ in Theorem \ref{thm:stand} is finished.
\end{proof}

Finally, we estimate the kernels related to Laplace and Laplace-Stieltjes transform type multipliers.
\begin{proof}[Proof of Theorem \ref{thm:stand}; the cases of $M_{\phi}^{\alpha,\beta}(\theta,\varphi)$ and $\widetilde{M}_{\phi}^{\alpha,\beta}(\theta,\varphi)$] Since $\phi$ is bounded, we have
\begin{align*}
|M_{\phi}^{\alpha,\beta}(\theta,\varphi)|\lesssim\int_{0}^{\infty}|\partial_{t}H_{t}^{\alpha,\beta}(\theta,\varphi)|dt\lesssim\iint\,d\Pi_{\alpha}(u)d\Pi_{\beta}(v)\int_{0}^{\infty}|\partial_{t}\Psi^{\ab}(t,\q)|\,dt.
\end{align*}
Now we split the inner integral in $t$ into two parts according to small and large $t$ and suitably bound them with the aid of Lemma \ref{4.8} obtaining
$$|M_{\phi}^{\alpha,\beta}(\theta,\varphi)|\lesssim\iint\frac{d\Pi_{\alpha}(u)d\Pi_{\beta}(v)}{\q^{\alpha+\beta+3/2}}.$$
From here the growth estimate follows by Lemma \ref{bridge}.

Next, we show the gradient condition for $M_{\phi}^{\alpha,\beta}(\theta,\varphi)$. For symmetry reasons it is enough to estimate only the derivative in $\theta$.
We have
\begin{align*}
|\partial_{\theta}M_{\phi}^{\alpha,\beta}(\theta,\varphi)|\lesssim\iint d\Pi_{\alpha}(u)d\Pi_{\beta}(v)\int_{0}^{\infty}|\partial_{\theta}\partial_{t}\Psi^{\ab}(t,\q)|dt.
\end{align*}
Proceeding as in the proof of the growth condition we get
$$|\partial_{\theta}M_{\phi}^{\alpha,\beta}(\theta,\varphi)|\lesssim\iint\frac{d\Pi_{\alpha}(u)d\Pi_{\beta}(v)}{\q^{\alpha+\beta+2}}$$
and using Lemma \ref{bridge} we arrive at the desired bound.

The estimates for $\widetilde{M}_{\phi}^{\alpha,\beta}(\theta,\varphi)$ are also straightforward. The growth bound follows from the growth condition for $M_{\phi}^{\alpha+1,\beta+1}(\theta,\varphi)$ and Corollary \ref{cor:compare}.
To show \eqref{sm} we write
\begin{align*}
|\partial_{\theta}\widetilde{M}_{\phi}^{\alpha,\beta}(\theta,\varphi)|&\lesssim\sin\theta\sin\varphi\,\bigg|\int_{0}^{\infty}\partial_{\theta}\partial_{t}H_{t}^{\alpha+1,\beta+1}(\theta,\varphi)\phi(t)\,dt\bigg|\\
&\quad+\sin\varphi\,\bigg|\int_{0}^{\infty}\partial_{t}H_{t}^{\alpha+1,\beta+1}(\theta,\varphi)\phi(t)\,dt\bigg|.
\end{align*}
Using \eqref{sm} for $M_{\phi}^{\alpha+1,\beta+1}(\theta,\varphi)$ and Corollary \ref{cor:compare} we see that the first term here satisfies the gradient estimate. To bound the remaining term we use \eqref{gr} for $M_{\phi}^{\alpha+1,\beta+1}(\theta,\varphi)$ and then apply Corollary \ref{cor:compare} and Lemma \ref{estimates} (a). To conclude, observe that the analogous bound for the $\partial_{\varphi}$ component in \eqref{sm} follows by symmetry.
\end{proof}

\begin{proof}[Proof of Theorem \ref{thm:stand}; the cases of $M_{\nu}^{\alpha,\beta}(\theta,\varphi)$ and $\widetilde{M}_{\nu}^{\alpha,\beta}(\theta,\varphi)$]
We first deal with ${M}_{\nu}^{\alpha,\beta}(\theta,\varphi).$ Recall that $\ab\ge-1/2$ and so $\alpha+\beta+1\ge0$.
Taking into account that
$$\int_{(0,\infty)}e^{-t(\alpha+\beta+1)/2}\,d|\nu|(t)<\infty,$$
it is clear that the task of showing the growth condition will be accomplished once we prove the estimate
$$e^{t(\alpha+\beta+1)/2}{H}_{t}^{\alpha,\beta}(\theta,\varphi)\lesssim\frac{1}{\m_{\ab}^{+}(B(\theta,|\theta-\varphi|))},\qquad t>0,\quad \theta,\varphi\in (0,\pi).$$
Observe that (see the proof of \cite[Theorem 2.4]{NoSjogren}, the case of $\mathcal{H}_*^{\ab}$) we have
$$\Psi_{t}^{\alpha,\beta}(\theta, \varphi)=\frac{\sinh\frac{t}{2}}{(\cosh\frac{t}{2}-1+\q)^{\alpha+\beta+2}}\lesssim\frac{1}{\q^{\alpha+\beta+3/2}}.$$
Since $e^{t(\alpha+\beta+1)/2}\Psi_{t}^{\alpha,\beta}(\theta,\varphi)\simeq 1\lesssim \q^{-(\alpha+\beta+3/2)}$ for $t$ large, it follows that 
$$e^{t(\alpha+\beta+1)/2}H_t^{\ab}(\theta,\varphi)\lesssim\iint\frac{d\Pi_{\alpha}(u)d\Pi_{\beta}(v)}{\q^{\alpha+\beta+3/2}},\qquad t>0,\quad \theta,\varphi\in(0,\pi).$$
The growth condition follows now from Lemma \ref{bridge}.

To prove the gradient condition we use the bound
$$|\partial_{\theta}H_{t}^{\alpha,\beta}(\theta,\varphi)|\lesssim\sinh\frac{t}{2}\,\iint\frac{d\Pi_{\alpha}(u)d\Pi_{\beta}(v)}{(\cosh\frac{t}{2}-1+\q)^{\alpha+\beta+5/2}}, $$ which follows from Lemma \ref{trig}. Since
$$\frac{e^{t(\alpha+\beta+1)/2}\sinh\frac{t}{2}}{(\cosh\frac{t}{2}-1+\q)^{\alpha+\beta+5/2}}\lesssim\frac{1}{\q^{\alpha+\beta+2}},\qquad t>0,$$
with the aid of Lemma \ref{bridge} we arrive at the estimate
$$\int_{(0,\infty)}e^{t(\alpha+\beta+1)/2}\,|\partial_{\theta}H_{t}^{\alpha,\beta}(\theta,\varphi)|e^{-t(\alpha+\beta+1)/2}\,d|\nu|(t)\lesssim\frac{1}{|\theta-\varphi| \m_{\ab}^{+}(B(\theta,|\theta-\varphi|))}.$$
This together with the symmetry of the kernel implies \eqref{sm} for ${M}_{\nu}^{\alpha,\beta}(\theta,\varphi).$ 
This finishes the proof in the case of ${M}_{\nu}^{\alpha,\beta}(\theta,\varphi).$

The standard estimates for $\widetilde{M}_{\nu}^{\alpha,\beta}(\theta,\varphi)$ essentially follow from those for ${M}_{\nu}^{\alpha,\beta}(\theta,\varphi)$, in a similar manner as for the Laplace transform type multipliers. We leave details to the reader.

\end{proof}

The proof of Theorem \ref{thm:stand} is complete.

\end{document}